\documentclass[final,a4paper,10pt]{article}
\usepackage{etex}
\usepackage{xparse}
\usepackage{amsmath,amssymb, amsthm}
\usepackage{nicefrac}
\usepackage[english]{babel}
\usepackage[a4paper,margin=2cm,footskip=.5cm]{geometry}
\usepackage{cite}
\usepackage[latin1]{inputenc}
\usepackage{enumitem}
\usepackage{latexsym}
\usepackage{mathtools}
\usepackage{mathrsfs}
\usepackage{textcomp}
\usepackage[pdftex]{graphicx}
\usepackage[font=small]{caption}
\usepackage{subfig}
\usepackage{empheq}
\usepackage{mdframed}
\usepackage{cite}
\setcounter{tocdepth}{3}
\usepackage{showkeys}
\usepackage{chngcntr}
\usepackage{algorithm}
\usepackage{algorithmicx}
\usepackage{amsthm}
\usepackage{algpseudocode}
\usepackage{stmaryrd}
\usepackage[usenames,dvipsnames,svgnames,table]{xcolor}
\usepackage{tikz}
\usetikzlibrary{backgrounds,shapes,arrows}
\usepackage{pgfplots,pgfplotstable}
\usepackage{epstopdf}

\pgfplotsset{compat=newest,
  width=0.33\textwidth,
  height=0.55\textwidth,
  grid style={dotted},
  ticklabel style = {font=\tiny},
  x label style={at={(axis description cs:0.5,-0.035)},anchor=north},
  y label style={at={(axis description cs:-0.155,.5)},anchor=south},
  title style={at={(axis description cs:0.5,1.06)},anchor=north},
  legend pos=south west,
  legend cell align=left,
  legend style = {inner xsep=0.8pt,inner ysep=0.4pt,font=\tiny,at={(axis description
      cs:0.015,0.006)}}
}

\usepackage{hyperref}
\definecolor{myblue}{rgb}{0,0,0.6}
\hypersetup{colorlinks=true, linkcolor=myblue,citecolor=myblue,filecolor=myblue,urlcolor=myblue}
\newcommand{\rhc}[1]{{\color{black}#1}}
\newcommand{\esc}[1]{{\color{black}{#1}}}
\newcommand{\cs}[1]{{\color{black}{#1}}}
\newcommand{\rev}[1]{{\color{black}{#1}}}

\usepackage{soul}

\newcommand{\wn}{k}
\newcommand{\uinc}{u^{\mathrm{inc}}}
\newcommand{\uscat}{u^{\mathrm{scat}}}
\newcommand{\ualt}{u^{\mathrm{alt}}}
\newcommand{\prms}{\Cp} 

\newcommand{\refs}[1]{\wh{#1}}
\newcommand{\Bsr}{\refs{\Bs}}
\newcommand{\rr}{\refs{\rho}}
\newcommand{\dtn}{\Op{DtN}}
\newcommand{\Derv}{\Op{D}}
\newcommand{\Dervxt}{\Op{D}_{\xrf}}
\newcommand{\xrf}{\refs{\Bx}}
\newcommand{\iu}{\rm i}
\newtheorem{theorem}{Theorem}[section]
\newtheorem{lemma}[theorem]{Lemma}
\newtheorem{proposition}[theorem]{Proposition}
\newtheorem{corollary}[theorem]{Corollary}
\newtheorem{assumption}[theorem]{Assumption}
\theoremstyle{definition}
\newtheorem{definition}[theorem]{Definition}
\newtheorem{remark}[theorem]{Remark}

\numberwithin{equation}{section}
\numberwithin{figure}{section}
\numberwithin{table}{section}
\newcommand{\eps}{\varepsilon}
\newcommand{\bsnu}{{\boldsymbol{\nu}}}
\newcommand{\bsmu}{{\boldsymbol{\mu}}}
\newcommand{\bschi}{{\boldsymbol{\chi}}}
\newcommand{\bsb}{{\boldsymbol{b}}}
\newcommand{\bse}{{\boldsymbol{e}}}
\newcommand{\pts}{{\mathrm{pts}}}
\newcommand{\be}{\begin{equation}}
  \newcommand{\ee}{\end{equation}}
\newcommand{\cA} {{\mathcal A}}

\newcommand{\cD} {{\mathcal D}}

\newcommand{\cF}{{\mathcal F}}

\newcommand{\cP}{{\mathcal P}}


\newcommand{\Vx}{{\mathbf{x}}}
\newcommand{\Vy}{{\mathbf{y}}}
\newcommand{\Vz}{{\mathbf{z}}}


\newcommand{\VA}{{\mathbf{A}}}

\newcommand{\VD}{{\mathbf{D}}}

\newcommand{\VF}{{\mathbf{F}}}

\newcommand{\VH}{{\mathbf{H}}}
\newcommand{\VI}{{\mathbf{I}}}

\newcommand{\VY}{{\mathbf{Y}}}


\usepackage{upgreek}

\newcommand{\Vzeta}      {\boldsymbol{\upzeta}}

\newcommand{\Vrho}       {\boldsymbol{\uprho}}


\newcommand{\BC}{{\boldsymbol{C}}}


\newcommand{\Bb}{{\boldsymbol{b}}}

\newcommand{\Bd}{{\boldsymbol{d}}}

\newcommand{\Bh}{{\boldsymbol{h}}}

\newcommand{\Bn}{{\boldsymbol{n}}}

\newcommand{\Bq}{{\boldsymbol{q}}}

\newcommand{\Bs}{{\boldsymbol{s}}}

\newcommand{\Bx}{{\boldsymbol{x}}}
\newcommand{\By}{{\boldsymbol{y}}}



\newcommand{\Bbeta}      {\boldsymbol{\beta}}


\newcommand{\nablabf}{\boldsymbol{\nabla}}

\newcommand{\Psibf}{\boldsymbol{\Psi}}

\newcommand{\Phibf}{\boldsymbol{\Phi}}



\newcommand{\Ca}{{\cal A}}

\newcommand{\Ch}{{\cal H}}

\newcommand{\Co}{{\cal O}}
\newcommand{\Cp}{{\cal P}}

\newcommand{\Cr}{{\cal R}}



\newcommand{\bbC}{\mathbb{C}}

\newcommand{\bbE}{\mathbb{E}}

\newcommand{\bbN}{\mathbb{N}}

\newcommand{\bbR}{\mathbb{R}}
\newcommand{\bbS}{\mathbb{S}}


\newcommand{\Fu}{\mathfrak{u}}


\renewcommand{\Re}{\operatorname{Re}}             
\renewcommand{\Im}{\operatorname{Im}}             


\newcommand*{\Op}[1]{\mathsf{#1}} 



\providecommand*{\wt}[1]{\widetilde{#1}}
\providecommand*{\wh}[1]{\widehat{#1}}


\providecommand*{\N}[1]{\left\|{#1}\right\|} 


\newcommand*{\rst}[2]{\left.\mathstrut#1\right|_{#2}}






\providecommand{\Supp}{\operatorname{supp}}                            
\providecommand{\supp}{\Supp}






\newcommand{\defaultdomain}{\Omega}
\newcommand{\defaultboundary}{\partial\defaultdomain}



\providecommand*{\Lp}[2][\defaultdomain]{L^{#2}({#1})}

\newcommand*{\NLp}[3][\defaultdomain]{\N{#2}_{\Lp[#1]{#3}}}

\newcommand*{\Ltwo}[1][\defaultdomain]{\Lp[#1]{2}}

\newcommand*{\NLtwo}[2][\defaultdomain]{\NLp[#1]{#2}{2}}





\NewDocumentCommand{\Hm}{O{\defaultdomain}md<>}{%
  \IfValueTF{#3}{H^{#2}_{#3}(#1)}{H^{#2}(#1)}}            

\newcommand*{\bHm}[3][\defaultdomain]{H_{#3}^{#2}({#1})}

\newcommand*{\NHm}[3][\defaultdomain]{{\N{#2}}_{\Hm[{#1}]{#3}}}

\newcommand*{\Hone}[1][\defaultdomain]{\Hm[#1]{1}}

\newcommand*{\NHone}[2][\defaultdomain]{{\N{#2}}_{\Hone[{#1}]}}

\newcommand{\hlb}{\frac{1}{2}}
\NewDocumentCommand{\Hh}{O{\defaultboundary}d<>}{%
  \IfValueTF{#2}{\bHm[#1]{\hlb}{#2}}{\Hm[#1]{\hlb}}}

\NewDocumentCommand{\Hmh}{O{\defaultboundary}d<>}{%
  \IfValueTF{#2}{\bHm[#1]{-\hlb}{#2}}{\Hm[#1]{-\hlb}}}



\newcommand{\contspcsymb}{C}
\NewDocumentCommand{\Contm}{sd<>O{\defaultdomain}m}{%
  \IfBooleanTF{#1}{\renewcommand{\contspcsymb}{\BC}}{\renewcommand{\contspcsymb}{C}}%
  \IfValueTF{#2}{\contspcsymb_{#2}^{#4}({#3})}{\contspcsymb^{#4}({#3})}}

\NewDocumentCommand{\pwContm}{sd<>O{\defaultdomain}m}{%
  \IfBooleanTF{#1}{\renewcommand{\contspcsymb}{\BC}}{\renewcommand{\contspcsymb}{C}}%
  \IfValueTF{#2}{\contspcsymb_{\mathrm{pw},#2}^{#4}({#3})}%
  {\contspcsymb_{\mathrm{pw}}^{#4}({#3})}}            







\DeclareDocumentCommand{\itg}{s O{\defaultdomain} o m O{\mathrm{d}\Bx}}{%
  \IfBooleanTF{#1}%
  {\IfNoValueTF{#3}{\int\nolimits_{#2}{#4}\,{#5}}{\int\nolimits_{#2}^{#3}{#4}\,{#5}}}%
  {\IfNoValueTF{#3}{\int\limits_{#2}{#4}\,{#5}}{\int\limits_{#2}^{#3}{#4}\,{#5}}}}%


\newcommand*{\cintv}[1]{[{#1}]}

\newcommand*{\cointv}[1]{[{#1}[}


\newcommand{\beq}{\begin{equation}}
  \newcommand{\eeq}{\end{equation}}
 \newcommand{\beqs}{\begin{equation*}}
   \newcommand{\eeqs}{\end{equation*}}
\newcommand{\rd}{{\rm d}}
\newcommand{\ri}{{\rm i}}
\newcommand{\re}{{\rm e}}

\newcommand{\Rea}{\mathbb{R}}
\newcommand{\bpf}{\begin{proof}}
\newcommand{\epf}{\end{proof}}

\newcommand{\Csolt}{C_{{\rm sol,} 2}}
\newcommand{\Csolo}{C_{{\rm sol,} 1}}

\usepackage{bbm}



\NewDocumentCommand{\NHmk}{O{\Omega}mD<>{1}}{\N{#2}^2_{H^{#3}_\wn (#1)}}
\newcommand{\unitx}{\Bq}
\newcommand{\unitxh}{\unitx}
\newcommand{\rhffp}{\widehat{u}^{\rm scat}_\infty}
\NewDocumentCommand{\rhffparg}{O{\rC}D<>{\unitxh}}{\widehat{u}^{\rm scat}_\infty(#1;#2)}
\NewDocumentCommand{\rhffph}{O{\rC}D<>{\unitxh}}{\widehat{u}^{\rm scat}_{\infty, \rm PML}(#1;#2)_h}
\NewDocumentCommand{\rhuhpml}{O{\rC}D<>{\cdot}}{\widehat{u}_{\rm PML}(#1;#2)_h}

\NewDocumentCommand{\rbffparg}{O{\rC}D<>{\unitxh}}{\breve{u}^{\rm scat}_\infty(#1;#2)}
\NewDocumentCommand{\rbffph}{O{\rC}D<>{\unitxh}}{\breve{u}^{\rm scat}_{\infty, \rm PML}(#1;#2)_h}
\NewDocumentCommand{\rbuhpml}{O{\rC}D<>{\cdot}}{\breve{u}_{\rm PML}(#1;#2)_h}
%

\title{Frequency-Explicit Shape Holomorphy \\
in Uncertainty Quantification for Acoustic Scattering}
\author{R.~Hiptmair\thanks{SAM, D-MATH, ETH Zurich, Switzerland,
    email: hiptmair@sam.math.ethz.ch} ,
  Ch.~Schwab\thanks{SAM, D-MATH, ETH Zurich, Switzerland,
    email: christoph.schwab@sam.math.ethz.ch} ,
  and E.~A.~Spence%
  \thanks{Department of Mathematical Sciences, University of Bath, UK, email: e.a.spence@bath.ac.uk}
}

\allowdisplaybreaks

\begin{document}
\maketitle

\begin{abstract}
  We consider frequency-domain acoustic scattering at a homogeneous star-shaped penetrable
  obstacle, whose shape is uncertain and modelled via a radial spectral parameterization
  with random coefficients. Using recent results on the stability of Helmholtz
  transmission problems with piecewise constant coefficients from $[${\sc A.~Moiola and
    E.~A. Spence}, {\em Acoustic transmission problems: wavenumber-explicit bounds and
    resonance-free regions}, Mathematical Models and Methods in Applied Sciences, 29
  (2019), pp.~317--354$]$ we obtain frequency-explicit statements on the holomorphic
  dependence of the scattered field and the far-field pattern on the stochastic shape
  parameters. This paves the way for applying general results on the efficient
  construction of high-dimensional surrogate models. We also take into account the effect of
  domain truncation by means of perfectly matched layers (PML).  
  In addition,
  spatial regularity estimates which are explicit in terms of the wavenumber $\wn$ 
  permit us to quantify the impact of
  finite-element Galerkin discretization using high-order Lagrangian finite-element
  spaces.
\end{abstract}


\section{Introduction}
\label{sec:intro}

%
\subsection{Scattering Transmission Problem}
\label{ss:stp}
We consider frequency-domain acoustic scattering by a homogeneous penetrable scatterer
occupying an open, bounded set $D\subset\bbR^{d}$, $d=2,3$, with Lipschitz boundary
$\partial D$, which is embedded in a homogeneous background medium occupying
$\bbR^d\backslash \overline{D}$.  This can be modeled mathematically by a
transmission problem for the Helmholtz equation with the Sommerfeld radiation conditions
at infinity\footnote{Vectors in Euclidean space are denoted by bold roman symbols,
  $\Bx\in \bbR^3$, and $|\Bx|$ stands for its Euclidean norm.}:
\begin{subequations}
  \label{eq:htp}
  \begin{align}
    \label{eq:htp1}
    \big(-\wn^{-2}\Delta  -n(\Bx)\big)u(\Bx) & = 0 &&
    \hspace{-2em}\text{in}\;\bbR^{d},\\
    \label{eq:htp2}
    |\Bx|^{\frac{d-1}{2}}
    \left(
      \wn^{-1}\frac{\partial}{\partial |\Bx|}-\iu
    \right)(u-\uinc)(\Bx) & \to 0 &&
    \hspace{-2em}\text{as}\;|\Bx|\to\infty, \text{ uniformly in $\Bx/|\Bx|$.}
  \end{align}
\end{subequations}
Here, $\wn>0$ is the \emph{wavenumber}, which is proportional to the angular frequency,
and $n=n(\Bx)$ is a spatially varying, but piecewise constant, 
\emph{index of refraction}, 
for which we assume
\begin{gather}
  \label{eq:n}
  n(\Bx) =
  \begin{cases}
    n_{i} > 0 & \text{for } \Bx\in D ,\\
    1 & \text{for }\Bx\in\bbR^{d}\setminus\overline{D}.
  \end{cases}
\end{gather}
In the acoustic modeling context, the solution $u$ of \eqref{eq:htp} gives the complex
amplitude of the sound pressure of the so-called \emph{scattered wave}, see
\cite[Sect~2.1]{COK13} for more detail.

Excitation is provided by an incident wave $\uinc$ that satisfies
$-\Delta \uinc - \wn^{2}\uinc=0$ in $\bbR^{d}$, the prime example being a plane wave
$\uinc(\Bx) = \exp(\ri \wn \Bd\cdot\Bx)$, propagating in direction $\Bd\in\bbR^{d}$,
$|\Bd|=1$. 
Existence and uniqueness of a solution $u\in H^{1}_{\mathrm{loc}}(\bbR^{d})$ of
\eqref{eq:htp} is well-known, see, e.g., \cite[Lemma 2.2]{MS19}.

In this article we focus on the model problem \eqref{eq:htp}--\eqref{eq:n} for the sake of
simplicity. Our analysis could also be extended to settings with more general piecewise
constant coefficient functions in the zero-order and second-order terms of the Helmholtz
equation \eqref{eq:htp1}, which would not require fundamentally new ideas.

\subsection{Uncertainty Quantification (UQ) via Polynomial Surrogate Models}
\label{ss:uqpoly}

The shape of the scatterer $D$ is assumed to be uncertain, which we take into account by
the customary approach to deterministic UQ through stochastic parameterization: the
uncertainty in $D$ is captured by introducing a dependence of $D$ on (possibly infinitely
many) real-valued random variables with known distributions, \cs{see, e.g., \cite{CCS15}
  for this approach and examples.}  
The key idea is to relegate those to 
\cs{real-valued, deterministic parameters, 
  and to endow the (possibly infinite, but countable) cartesian product of
  the parameter domains with probability measures. 
  We refer to
  \cite{CCS15,ChCChS14,JerezChSZech2017,AJSZ20_2734} 
  and the references therein for
  discussions and realizations of this idea. 
  In the present paper, 
we arrive at a family
of transmission problems depending on infinitely many parameters.  
Holomorphic dependence of solutions on the shape of the domain in which the 
PDE is set is available for a wide range of elliptic and parabolic PDEs, see
\cite{HSS15,JerezChSZech2017,CSZ18,HS21_2779,AJSZ20_2734} and the references there.
The results in these references were not explict in e.g. the wavenumber.
\emph{Frequency-explicit} holomorphy of solutions of time-harmonic, acoustic scattering
was developed for certain forward and problems acoustic scattering problems 
recently in \cite{DLM22,KUS24,GKN25,GKS20}.
The solution $u$ of the presently considered
\emph{transmission problem} and any derived quantity 
of interest will then become functions of the parameters, alike.
}

\cs{
Next we build sparse polynomial surrogate models of those functions 
on the parameter domain. 
This can be done accurately and efficiently using suitable spaces of multi-variate
polynomials in the parameters, provided that $u$ and the quantities of interest are
analytic/holomorphic\footnote{We use ``analytic'' and ``holomorphic'' as synonyms,
  \emph{cf.} \cite[Definition~5.1]{MUJ86}.} functions of the parameters with a
sufficiently large domain of analyticity in the extension of the parameter space 
into the complex domain; see \cite{CCS15} and the works cited there.
}
\subsection{Simplest Case: Size Uncertainty Quantification}
\label{ss:scuq}
We first consider the case that only the size of scatterer $D$ 
and not its shape is random, and model it by setting
\begin{gather}
  \label{eq:do}
  D = D(\omega) := (1+ \tfrac12 Y(\omega))D_{0}\;, \quad \omega \in \Omega,
\end{gather}
where
\begin{itemize}
\item $D_{0}\subset\bbR^{d}$ is a bounded Lipschitz domain 
  (the ``{nominal} scatterer''), and 
\item $Y:\Omega\to [-1,1]$ is a random variable on some probability space
  $\Omega$, whose details are not important at this stage.
\end{itemize}
We follow the policy outlined in the previous section and replace $Y(\omega)$ 
with a single real parameter $y\in \cintv{-1,1}$. 
This yields the \emph{parametric domain model}
\begin{gather}
  \label{eq:dy}
  D(y) := (1+\tfrac12 y)D_{0}\;,\quad -1\leq y \leq 1 \;.
\end{gather}
Simple scaling arguments show that, 
if $u=u(y)\in H^1_{\rm loc}(\Rea^d)$ solves the transmission problem \eqref{eq:htp}, 
then $\wh{u} = \wh{u}(y)\in H^1_{\rm loc}(\Rea^d)$
defined as
\begin{gather}
  \label{eq:pbu}
  \wh{u}(y;\refs{\Bx}) := u((1+\tfrac12 y)\refs{\Bx})\;,\quad \refs{\Bx}\in\Rea^{d}\;,
\end{gather}
solves \eqref{eq:htp} with 
(i) $D(y)$ replaced with $D_{0}$, 
(ii) a modified wave number
$\wh{\wn} = \wh{\wn}(y) := (1+\tfrac12 y)k$, and 
(iii) the exciting field
$\wh{u}_{\mathrm{inc}}(\refs{\Bx}) := u_{\mathrm{inc}}((1+\tfrac12 y)\refs{\Bx})$. 
We
realize that, up to an affine transformation, the dependence of $\wh{u}$ on the scaling
parameter $y$ is the same as its dependence on the wave number $\wn$.

The dependence of the solution of the Helmholtz transmission problem on the wave number
$\wn$ is a classic topic of study in scattering theory.  One key result is when $n_{i}>1$
and $D_{0}$ is smooth with strictly-positive curvature the norm of the solution operator
grows superalgebraically through an increasing sequence of wavenumbers \cite{PoVo:99}
\footnote{Strictly speaking, \cite{PoVo:99} proves the existence of a sequence of
  resonances exponentially close to the real axis, but then the ``resonances to
  quasimodes'' result of \cite{St:00} implies super-algebraic growth through a sequence of
  real $k_j$s.}; i.e., in the scaled setting above, there exist sequences of real wavenumbers
$( \wh{k}_{j})_{j\in\bbN}$, $\wh{k}_{j}\to\infty$ for $j\to\infty$, such that, given
$R, N>0$, there exists $C_N>0$ such that\footnote{We write $B_{R}$ for the open ball
  around $0$ in $\bbR^{d}$.}
\begin{gather}
  \label{eq:kj0}
  \N{\wh{u}}_{H^{1}(B_R)} \geq C_N (\wh{k}_j)^N 
  \quad\text{ for all }
  j\in\bbN.
\end{gather}
At least when $D_0$ is a ball, this growth is exponential, i.e., 
there exist $C, \gamma>0$ such that 
\begin{equation}
  \label{eq:kj}
  \N{\wh{u}}_{H^{1}(B_R)} \geq C \exp\big(\gamma \wh{k}_{j}\big)
  \quad\text{ for all }
  j\in\bbN;
\end{equation}
see \cite{Cap12, CLP12, AC16}. 
To explain this growth, recall that \emph{resonances} of
the Helmholtz transmission problem are poles of the meromorphic extension
$\wh{k}\in\bbC\mapsto \wh{u}(\wh{k})$. When $n_{i}>1$ and $D_{0}$ is smooth with
strictly-positive curvature there are resonances superalgebraically close to the real axis
\cite[Theorem 1.1]{PoVo:99}, corresponding physically to ``whispering-gallery modes"
created by total internal reflection of rays hitting $\partial D_0$ from $D_0$.  The
$\wh{k}_j$, often called \emph{quasi-resonances}, can then be thought of as the real parts
of these resonances close to the real axis; 
see Figure~\ref{fig:qr} (a) for a numerical illustration. 
We highlight also that the
density of the near-real-axis resonances, and hence also the $\wh{k_j}$, 
increases as $j$ increases \cite[Theorem 1.3]{CaPoVo:01}.

\begin{figure}[h]
  \subfloat[$n_{i} = 3$]{%
    \includegraphics[width=0.5\textwidth]{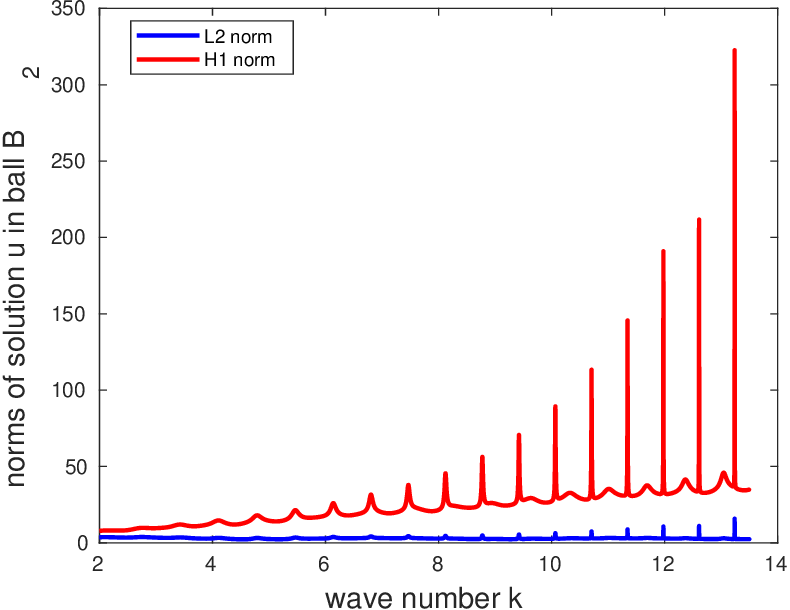}
  }
  \subfloat[$n_{i} = \frac{1}{3}$]{%
    \includegraphics[width=0.5\textwidth]{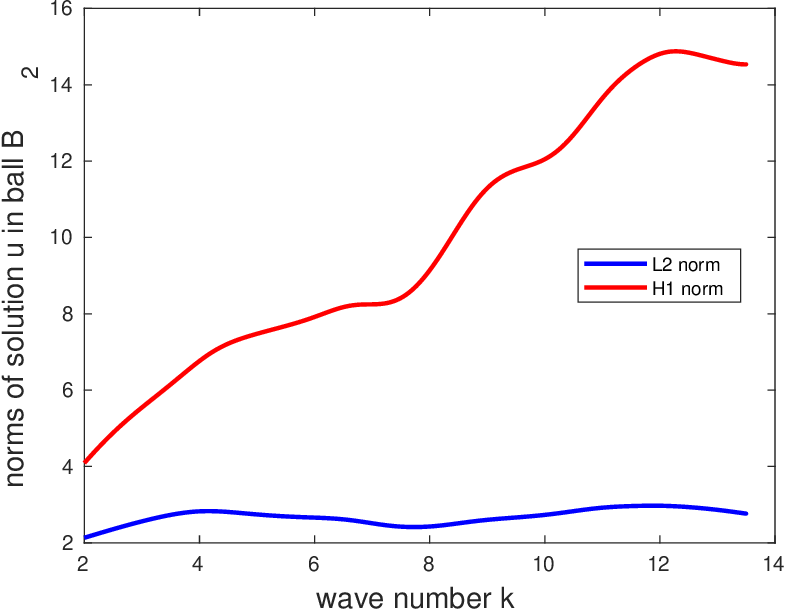}
  }
  
  \caption{{Dependence of norms ${\N{u}_{L^{2}(B_{2})}}$ and
      ${\N{u}_{H^{1}(B_{2})}}$ of the solution $u$ of the transmission problem
      \eqref{eq:htp} on $\wn$, when $d=2$, $D$ is the unit disk, and
      $u_{\mathrm{inc}}(\Bx)=\exp(\iu \wn x_{1})$. For $n_{i}=3$ quasi-resonances manifest
      themselves as spikes of the graph $k\mapsto\N{u}_{H^{1}(B_{2})}$, for $n_{i}=\frac13$
      such spikes are conspicuously absent. The norms were computed by means of a Fourier
      spectral method with a number of modes large enough to render any discretization error
      negligible. MATLAB codes in \url{https://github.com/hiptmair/ScatteringQuasiResonances}.}}
  \label{fig:qr}
\end{figure}

From the relationship $\wh{k}(y) = (1+\frac12 y)\wn$ we conclude that the mapping
$y\in\cintv{-1,1}\mapsto \N{\wh{u}}_{H^{1}(D_{0})}$ may feature spikes, which will become
more numerous, steeper, and higher with increasing $\wn$. Therefore, although the function
$y\mapsto \wh{u}(y)$ is analytic, already for moderate $\wn$ its accurate
polynomial approximation may require very high degrees. 
Putting it bluntly, polynomial surrogate
modeling is doomed in the presence of quasi-resonances and at this point no viable
alternative is available, which forces us to avoid scattering problems beset with
quasi-resonances.

\begin{remark}
  \label{rem:ratfun}
  A promising approach to obtain efficient surrogate models for the map $\wh{k}\mapsto
  \wh{u}(k)$ may rely on \emph{rational functions}, see \cite{BNP20}. So far this is
  confined to a single parameter and extension to many parameters as required by general
  shape UQ methods is not clear yet. 
\end{remark}

\subsection{Scattering problems without quasi-resonances} 
\label{ss:nonres}

Whereas the Helmholtz transmission problem with $n_i>1$ can suffer from
quasi-resonances, the problem with $n_i<1$ does not.  Indeed, the condition $n_i<1$
rules out total internal reflection of rays hitting $\partial D_0$ from
$D_0$. Furthermore \cite{MS19} showed that when $n_i<1$ and $D$ is star-shaped
Lipschitz,
given $k_0, R>0$, for all $k\geq k_0$, $k_{0} > 0$ sufficiently large, 
the outgoing solution of 
\beq\label{eq:SCHelm}
{(k^{-2}\Delta +n ) u = f} \quad\text{ with } \quad f\in L^2_{\rm comp}(B_R)
\eeq
satisfies 
\begin{gather}
  \label{eq:ntcest}
  \N{u}_{H^{1}_{\wn}(B_{R})} \leq C\wn \NLtwo[B_{R}]{f} \;,
\end{gather}
where $\N{u}^2_{H^1_\wn (B_R)}:= \wn^{-2}\N{\nabla u}^2_{L^2(B_R)} + \N{u}^2_{L^2(B_R)}$
and $C$ depends only on $n_i$, $d$, and $R$; i.e.,
\begin{quote}
  \emph{the $k$-explicit bound \eqref{eq:ntcest} holds uniformly on the set of star-shaped scatterers.}
\end{quote}
This observation motivated the present paper, because star-shapedness is a simple
geometric property, whose persistence under random shape perturbation seems rather
natural.
%
The bound \eqref{eq:ntcest} implies that the poles of the meromorphic function
$\wn\in \bbC\mapsto u(k)$ are located below $\left\{z\in\bbC:\,\Im z = -\nu\right\}$ for
some $\nu>0$, i.e., there is a resonance-free strip beneath the real axis, 
which means that no quasi-resonances are present, see Figure~\ref{fig:qr} (b).  
This
resonance-free-strip result was proved for smooth $D$ with strictly-positive curvature in
\cite{CaPoVo:99}, and more-detailed information about the location of the resonances in
this case is given in \cite{Ga:18}. 

%


We now explore the implications of the results from \cite{MS19} in the simple
size-UQ setting of Section~\ref{ss:scuq}.  
If $n_{i}\leq 1$ and $D_{0}$ is
star-shaped with respect to $0$, then $y\in\bbC\mapsto \wh{u}(y)$ is meromorphic on
$\left\{z\in \bbC:\,\Re z > \rhc{-2} \right\}$. However, since $\wh{\wn}(y)=(1+\frac12 y)k$,
the distance of the poles of $y\mapsto \wh{u}(y)$ to the real axis shrinks like $O(\wn^{-1})$ as $\wn\to\infty$. 
This means that the extent of the domain of analyticity of the function
$y\in\bbC\mapsto \wh{u}(y)$ in the direction of the imaginary axis will decrease like
$O(\wn^{-1})$ for $\wn\to\infty$.  Therefore, even in non-resonant situations,
$\wn$-uniformly accurate polynomial surrogate models for $y\mapsto \wh{u}(y)$ will require
increasingly larger degree for growing $\wn$.  
This growth with respect to $\wn$ can
be avoided only if the size of admissible shape perturbations (controlled by the range of the
parameter $y$ in the size-UQ model) decreases as $O(\wn^{-1})$ for $\wn\to\infty$. 
In other words,
\begin{quote}
  \emph{shape perturbations must be confined to the scale of the wavelength (or smaller)
    to permit provably $\wn$-robust polynomial surrogate modelling.}
\end{quote}

\begin{remark}
  \label{rem:gks}
  We highlight that this condition of $O(\wn^{-1})$ perturbations is encountered in
  the UQ of the Helmholtz equation with variable coefficients, i.e.,
  \begin{equation*}
    k^{-2} \nabla_\Bx\cdot\big( A(\Bx,\By) \nabla_\Bx u (\Bx,\By) \big)  + n(\Bx,\By) u(\Bx,\By) = {f}(\Bx) 
  \end{equation*}
  with $u$ satisfying the Sommerfeld radiation condition and $A(\Bx,\By)$ and
  $n(\Bx,\By)$ perturbations of some $A_0(\Bx)$ and $n_0(\Bx)$, respectively.  
  When the
  problem with $A_0$ and $n_0$ is nontrapping, given $k_0>0$ there exists $C>0$ such
  that for all $k\geq k_0$ the map $\By\mapsto u(\By)$ is analytic for all $\By$ such
  that \cite[Theorem~1.1]{SpWu:22}
  \begin{equation}\label{eq:SpWu1}
    k \max \big\{ \|A-A_0\|_{L^\infty} , \|n-n_0\|_{L^\infty}\big\} \leq C \;,
  \end{equation}
  with this bound sharp through an unbounded sequence of {wavenumbers $k$} 
  \cite[Theorem 1.4]{SpWu:22}.  
  In \cite[Theorem~4.2]{GKS20}
\rev{and \cite[Theorem 3.1]{GKN25},} 
  the condition \eqref{eq:SpWu1} is also
  shown to be a sufficient condition for analyticity (in the form of the relevant bounds
  on derivatives of $u$ with respect to $\By$) \rev{for particular classes of nontrapping $A_0,n_0$.} 
  \rev{In
  \cite[Section~4]{KUS24} $O(k^{-1})$ bounds on shape perturbations are also identified as
  necessary for reliable Bayesian shape inversion.}
\end{remark}


\subsection{Layout of the paper}
\label{ss:outline}

The foundations for the current work were laid in \cite{HSS15}, which dealt with shape UQ
for the Helmholtz transmission problem \eqref{eq:htp}, \eqref{eq:n} in a
non-$\wn$-explicit way
under the assumption that the wave number was smaller than any possible quasi-resonance.
Here we reuse a tool of \cite{HSS15}, the parameterization of the shape of $D$ by a
\emph{radial displacement function} $r\in C^{1}(\bbS^{d-1})$ defined on the unit sphere
$\bbS^{d-1}$ in $\bbR^{d}$:
\begin{gather}
  \label{eq:Dr}
  D=D(r) := \left\{ \Bx=\rho \Bsr \in\bbR^{d}:\; 0\leq \rho < 1+ r(\Bsr),\,
    \Bsr \in\bbS^{d-1} \right\}\;.
\end{gather}
If $\N{r}_{C^{0}(\bbS^{d-1})}<1$, then any $D(r)$ is star-shaped with respect to $0$ and
if, in addition, $n_{i}\leq 1$, then the requirements of \cite{MS19} for non-resonant
scattering are satisfied immediately.

The shape parameterization \eqref{eq:Dr} is examined in Section~\ref{sec:param},
where we extend it to a mapping of $D(r)$ to the unit sphere $B_{1}$ to
convert \eqref{eq:htp} into a variational problem with $r$-dependent coefficients. This is
the gist of the popular \emph{domain mapping method} for shape UQ, first proposed in
\cite{XIT06} and then used by many other authors, e.g.\ in \cite{HPS15,CNT15a,CCS15}. 
Yet, the latter
works rely on volume diffeomorphisms for parameterization of shapes, which do not offer
a natural avenue to the preservation of star-shapedness.

In Section~\ref{sec:MS} we review the results of \cite{MS19}, apply them to the concrete
transmission problem \eqref{eq:htp}-\eqref{eq:n} and state more precisely the estimate
\eqref{eq:ntcest} and generalizations of it, providing \emph{completely $k$-explicit}
expressions for the constants. Then, in Section~\ref{sec:FreDepHol}, we admit
$\bbC$-valued radial displacement functions $r$ in \eqref{eq:Dr}; thanks to the domain
mapping approach the resulting transmission problem remains well defined and a
perturbation argument yields $\wn$-explicit stability estimates. These estimates give
detailed $\wn$-explicit information about \emph{shape holomorphy}, more precisely, about
the domain of analyticity of $r\mapsto \wh{u}(r)$, $\wh{u}(r)$ the solution of
\eqref{eq:htp}, with $D=D(r)$ given by \eqref{eq:Dr}, pulled back to the unit
sphere. Such ``shape holomorphy'' results have been established in recent years for a
host of parametric PDEs models
\cite{CNT15a,AJSZ20_2734,HS21_2779,CSZ18,JerezChSZech2017}. In these works, the
size of the holomorphy domain of parametric solutions and operators was not made explicit
in terms of problem parameters such as wavenumber, or (in \cite{CSZ18}) the Reynolds
number.

Next, in Section~\ref{sec:ExpBdsFEM} we investigate the impact of a high-order
finite-element discretization of the transmission problem. We give $\wn$-explicit
estimates of how the finite-element discretization error affects the estimates of
statistical moments.

In Section~\ref{sec:faf} we present the so-called far-field pattern as a representative of
relevant output functionals that depend on $\wh{u}(r)$ and inherit its holomorphy.

In Section~\ref{sec:ParHolIntQuad} we introduce a second-level affine parameterization of
$r$ through a sequence of uniformly-distributed random variables.  
Then, in Section~\ref{sec:AplShUQ}, we appeal to the theory of \cite{CCS15,Z18_2760,ZS20_2485}, 
and leverage the available information on shape
holomorphy to predict the rate of convergence of Smolyak-type, high-dimensional quadrature
when applied to extract the statistical mean of the far-field-pattern random field.

\subsection*{List of notations}

\newcommand{\rC}{\mathfrak{r}}
\newcommand{\rCS}{\mathfrak{R}}

\begin{tabular}[c]{r@{\quad$\hat{=}$\quad}p{0.7\linewidth}}
  $d$ & spatial dimension $\in \{2,3\}$ \\
  $\wn$ & positive wavenumber \\
  $r=r(\refs{\Bs})$ & radial displacement function $\in C^{1}(\mathbb{S}^{d-1})$, see
  \eqref{eq:Dr} \\
  $u=u(r;\refs{\Bx})$ & solution of scattering problem, see \eqref{eq:vf} \\
  $\Phibf = \Phibf(r;\wh{\Bx})$ & transformation from reference domain, see \eqref{eq:Phi}
  \\
  $\chi=\chi(\rho)$ & cut-off function defined in \eqref{chi} \\
  $\refs{u}=\refs{u}(r;\refs{\Bx})$ & solution of transformed scattering problem
  \eqref{eq:vft} \\
  $\refs{\VA} = \refs{\VA}(r;\refs{\Bx})$ & diffusion coefficient tensor of transformed
  variational problem, see \eqref{eq:At} \\
  $\refs{n} = \refs{n}(r;\refs{\Bx})$ & refractive index of transformed
  variational problem, see \eqref{eq:nt} \\
  $\N{\cdot}_{H^1_\wn (B_2)}$, $\N{\cdot}_{H^2_\wn (B_2)}$ & $\wn$-scaled Sobolev norms,
  see \eqref{eq:weighted_norms} \\
  $\rC = \rC(\refs{\Bs})$ & \emph{complex-valued} radial displacement function
  $\in C^{1}(\bbS^{d-1},\bbC)$ \\
  $\rCS$ & set of admissible \emph{complex-valued} radial displacement functions, see
  \eqref{eq:rCS} \\
  $\refs{a} = \refs{a}(\rC;\refs{u},\refs{v})$ & bilinear form of transformed variational
  problem, see \eqref{eq:ahatdef} \\
  $\Ca = \Ca(k;C_{\Re},C_{\Im})$ & domain of analyticity of $\rC\mapsto \wh{u}(\rC)$, see
  \eqref{eq:Anset}\\
  $\refs{u}_{\rm PML} = \refs{u}_{\rm PML} (\rC;\refs{\Bx}) $ & solution of PML-truncated
  transformed scattering problem \\
  $\Omega_{\mathrm{tr}}$ & PML-truncated computational domain \\ 
  $\wt{\sigma} = \wt{\sigma}(\refs(\rho))$ & PML control function, see
  \eqref{eq_sigma_prop} \\
  $p\in\mathbb{N}$ & polynomial degree/approximation order of finite element space, see
  Assumption~\ref{ass:spaces} \\
  $V_{h}$ & finite element space  $\subset H^{1}(\Omega_{\mathrm{tr}})$ \\
  $\refs{u}_{\rm PML}(\rC;\cdot)_h\in V_{h}$ & Galerkin FE solution with PML truncation \\
  $\cA_p  = \cA_{p}(k;C_{\Re},C_{\Im})$ & set of admissible complex-valued radial
  displacement function for FEM, see \eqref{eq:Hh_gamma} \\
  $\refs{u}^{\rm scat}_\infty = \refs{u}^{\rm scat}_\infty(\rC;\unitxh)  $ & far-field
  pattern, see \eqref{eq:faft} \\
  $\refs{u}^{\rm scat}_{\infty, \rm PML}(\rC;\unitxh)_h  $ & approximate far-field pattern
  based on FEM and PML, see \eqref{eq:faft2} \\
  $r_{j}=r_{j}(\Bx)$ & radial expansion functions, see \eqref{eq:rj} \\
  $\prms$ & set of shape parameter sequences, see \eqref{eq:prms} \\
  $\Ch = \Ch(C_{\Re},C_{\Im},k)$ & domain of analyticity of $\Vz \mapsto \breve{u}({\Vz}
  ;\cdot)$, see Corollary~\ref{cor:H}. 
\end{tabular}


\section{Variational Formulation of the Transmission Problem with Parametric Interface}
\label{sec:param}
The parameterization \eqref{eq:Dr}, apart from nicely fitting the assumptions of
the theory of \cite{MS19}, 
also paves the way for a detailed and explicit analysis, on which we now embark.
\subsection{A parametric family of diffeomorphisms}
\label{sec:pi}

Recall the parameterization of the shape of the scatterer $D$ by means of a continuously
differentiable radial displacement function $r\in C^{1}(\bbS^{d-1},\bbR)$ defined on the
$d$-dimensional unit sphere $\bbS^{d-1}\subset\bbR^{d}$
\begin{gather*}
  \tag{\ref{eq:Dr}}
  D=D(r) := \left\{ \Bx=\rho{\Bsr}\in\bbR^{d}:\; 0\leq \rho < 1+ r({\Bsr}),\,
    {\Bsr}\in\bbS^{d-1} \right\}\;.
\end{gather*}
It goes without saying that $r$ has to be bounded from below at least to render
\eqref{eq:Dr} meaningful. For our analysis we confine $r$ to a more narrow range of values:

\begin{assumption}[Bounds for radial displacement function]
  \label{ass:rbd}
  We admit only radial displacement functions belonging to the set 
  \begin{gather}
    \label{eq:R}
    \Cr := \left\{r\in C^{1}(\bbS^{d-1},\bbR),\,\N{r}_{C^{0}(\bbS^{d-1})}\leq \frac13 \right\},
  \end{gather}
  that is $|r({\Bsr})|\leq \frac13$ for all $\Bsr \in \bbS^{d-1}$. 
\end{assumption}

Note that if $r\in C^{m}(\bbS^{d-1},\bbR)$, then $D(r)$ is of class $C^{m}$.  
In particular, $D(r)$ is a bounded Lipschitz domain and the interface
$\Gamma(r):=\partial D(r)$ is contained in a spherical shell:
\begin{gather}
  \label{eq:gann}
  \text{Assumption~\ref{ass:rbd}}\quad\Rightarrow\quad
  {\frac23} \leq |\Bx| \leq
  {\frac43}\quad \forall \Bx \in \Gamma(r)\;.
\end{gather}
As in \cite[Section~3]{HSS15}, we shift parameter dependence from the domain $D(r)$ to the
coefficients of a transformed transmission problem, for which the scatterer occupies the
unit ball $\refs{D}:=\{\Bx\in\bbR^{d}:\,|\Bx|<1\}$. Taking the cue from \cite[Equation
(3.2)]{HSS15} or \cite[Section~3.2]{GHS24}, the transformation is effected by the
parameter-dependent mapping $\Phibf(r):\bbR^{2}\to\bbR^{2}$, $r\in \Cr$, given by
\fboxsep1ex
\begin{gather}
  \label{eq:Phi}
  \boxed{
    \Phibf(r;\refs{\Bx})\mid_{\refs{\Bx} = \rr\Bsr \in \bbR^{d}}
    = 
    \Bsr\bigl(\rr + \chi(\rr)r(\Bsr)\bigr)\;,\quad
    \Bsr\in\bbS^{d-1}\,,\; \rr\geq 0 \;,\quad \refs{\Bx}=\rr\Bsr \in \bbR^{d}}\;,
\end{gather}
\begin{minipage}[c]{0.5\textwidth}
  with a cut-off function\footnote{{We can choose
      $\chi(\rho) = f(g(\frac{\rho-1}{1-\lambda}))$ with
      $f(\xi) := \exp\left(1-\frac{1}{1-\xi^{2}}\right)$ for $|\xi|<1$, $f(\xi) := 0$ for
      $|\xi|\geq 1$, $g(\zeta) := \arctan(\frac32\zeta)/\arctan(\frac32)$, and
      $\lambda=\frac{1}{40}$. This function is displayed in the plot beside.}}
  $\chi \in C^{\infty}(\cointv{0,\infty})$, satisfying for some $0<\lambda\ll 1 $
  \begin{subequations}
    \label{chi}
    \begin{flalign}
      \label{chi:1}
      \text{(i)} && 0\leq \chi(\rho) & \leq 1\quad \forall\rho\geq 0, && \\
      \label{chi:2}
      \text{(ii)} && \chi(\rho)& =0\quad\text{for}\; \rho\leq \lambda\;\text{or}\; \rho \geq 2-\lambda,&& \\
      \label{chi:3}
      \text{(iii)} && |\chi'(\rho)|& \leq {2-\lambda} \quad \forall\rho\geq 0, && \\
      \label{chi:4}
      \text{(iv)} &&
      \chi(1)&=1. &&
    \end{flalign}
  \end{subequations}
\end{minipage}%
\begin{minipage}[c]{0.5\textwidth}
  \includegraphics[width=\textwidth,clip]{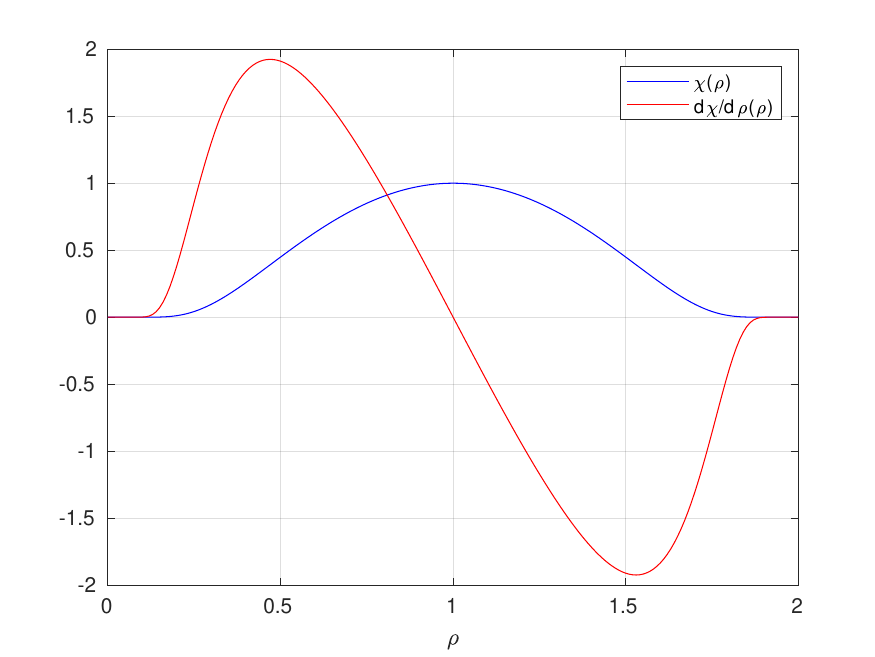}
\end{minipage}%

Property \eqref{chi:3} combined with \eqref{eq:gann} ensures that the radial dilation
effected by $\Phibf(r;\cdot)$, $\rho \mapsto \rho+ \chi(\rho)r(\refs{\Bsr})$, has a derivative
bounded below by $\frac13$ and, hence, is strictly monotone. We therefore have the
following lemma.

\begin{lemma}[Properties of mappings $\Phibf$]
  \label{lem:propphi}
  Under Assumption \ref{ass:rbd} 
  and \eqref{chi} the mappings $\Phibf(r;\cdot)$, $r\in\Cr$, 
  as defined in \eqref{eq:Phi} are diffeomorphisms of class $C^{m-1,1}$, 
  map the unit sphere $\partial \refs{D} = \bbS^{d-1}$ onto the interface $\Gamma(r)$, and 
  agree with the identity in the exterior of the spherical shell
  $\{\refs{\Bx}\in\bbR^{d}:\, \lambda < |\refs{\Bx}| < 2-\lambda\}$.
\end{lemma}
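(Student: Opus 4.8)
The plan is to exploit that $\Phibf(r;\cdot)$ preserves the rays emanating from the origin and acts on each of them through the single scalar radial map $g_{\Bsr}(\rr) := \rr + \chi(\rr)\,r(\Bsr)$, so that every assertion reduces to a one-dimensional statement parameterized by $\Bsr\in\bbS^{d-1}$. First I would record the properties of $g_{\Bsr}$: it is $C^{\infty}$ in $\rr$ (since $\chi\in C^{\infty}$), satisfies $g_{\Bsr}(0)=0$ because $\chi(0)=0$ by \eqref{chi:2}, and has derivative
\[
g_{\Bsr}'(\rr) = 1 + \chi'(\rr)\,r(\Bsr) \;\geq\; 1 - (2-\lambda)\tfrac13 \;=\; \tfrac{1+\lambda}{3} \;>\; \tfrac13 ,
\]
using \eqref{chi:3} and $|r|\leq\frac13$ from Assumption~\ref{ass:rbd}. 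Hence $g_{\Bsr}$ is strictly increasing; together with $g_{\Bsr}(0)=0$ and $g_{\Bsr}(\rr)\to\infty$ this makes it a bijection of $[0,\infty)$ onto itself, and moreover $g_{\Bsr}(\rr)/\rr = \rr^{-1}\!\int_0^{\rr} g_{\Bsr}'(t)\,\mathrm{d}t \geq \frac13$ for $\rr>0$.

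From ray-preservation, bijectivity of $\Phibf(r;\cdot)\colon\bbR^{d}\to\bbR^{d}$ is then immediate: the origin is the unique preimage of the origin, and any $\By=s\Bt$ with $s>0$, $\Bt\in\bbS^{d-1}$ has the unique preimage $g_{\Bt}^{-1}(s)\,\Bt$. For the local-diffeomorphism property I would compute the differential in the orthonormal frame adapted to the ray. Since the $\Bsr$-ray is mapped into itself, $D\Phibf(r;\refs{\Bx})$ is block upper-triangular, with radial stretch $g_{\Bsr}'(\rr)$, tangential block $\tfrac{g_{\Bsr}(\rr)}{\rr}\,\mathrm{Id}_{d-1}$, and an off-diagonal block involving the surface gradient $\tfrac{\chi(\rr)}{\rr}\nabla_{\bbS^{d-1}}r(\Bsr)$ that does not affect the determinant. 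Therefore
\[
\det D\Phibf(r;\refs{\Bx}) = g_{\Bsr}'(\rr)\Big(\tfrac{g_{\Bsr}(\rr)}{\rr}\Big)^{d-1} \;\geq\; \Big(\tfrac13\Big)^{d} \;>\; 0 ,
\]
so $\Phibf(r;\cdot)$ is everywhere a local diffeomorphism, and with the global bijectivity a global one.

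For the regularity I would pass to the Cartesian form $\Phibf(r;\refs{\Bx}) = \refs{\Bx} + \chi(|\refs{\Bx}|)\,r\!\big(\refs{\Bx}/|\refs{\Bx}|\big)\,\refs{\Bx}/|\refs{\Bx}|$. On $\{\refs{\Bx}\neq 0\}$ the maps $\refs{\Bx}\mapsto|\refs{\Bx}|$ and $\refs{\Bx}\mapsto\refs{\Bx}/|\refs{\Bx}|$ are real-analytic, $\chi\in C^{\infty}$, and $r\in C^{m}$, so the expression is $C^{m}$ on all of $\{\refs{\Bx}\neq 0\}$, including across the spheres $|\refs{\Bx}|=\lambda$ and $|\refs{\Bx}|=2-\lambda$ (the $C^{\infty}$-flatness of $\chi$ there preventing any loss). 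On $\{|\refs{\Bx}|\leq\lambda\}$ the factor $\chi(|\refs{\Bx}|)$ vanishes by \eqref{chi:2}, so the map equals the identity there, which removes the apparent singularity of the angular factor at the origin. Thus $\Phibf(r;\cdot)\in C^{m}(\bbR^{d})$, and since it reduces to the identity outside the compact shell its derivatives up to order $m$ are bounded, yielding the stated $C^{m-1,1}$ regularity. The inverse inherits the same class: the uniform bound $g_{\Bsr}'\geq\frac13$ lets me apply the implicit function theorem to $g$, which is jointly $C^{m}$ in $(\rr,\Bsr)$, so $g_{\Bsr}^{-1}$ is $C^{m}$ and $\Phibf(r;\cdot)^{-1}(\By) = g_{\By/|\By|}^{-1}(|\By|)\,\By/|\By|$ is $C^{m}$ away from the origin and the identity near it.

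Finally, the two remaining assertions follow directly. On the unit sphere $\rr=1$, and $\chi(1)=1$ by \eqref{chi:4}, so $\Phibf(r;\Bsr) = \Bsr\,(1+r(\Bsr))$, which by \eqref{eq:Dr} parameterizes exactly the interface $\Gamma(r)=\partial D(r)$ (a bijection of $\bbS^{d-1}$ onto $\Gamma(r)$ because $1+r(\Bsr)>0$); and the identity-outside-the-shell claim is the same vanishing of $\chi$ used above. I expect the main obstacle to be the regularity bookkeeping, namely verifying that no smoothness is lost where the angular composition $r(\refs{\Bx}/|\refs{\Bx}|)$ meets the origin and where $\chi$ turns on and off, and pinning down the exact Hölder class; the bijectivity, the Jacobian positivity, and the boundary identity are all direct consequences of the one-dimensional monotonicity of $g_{\Bsr}$.
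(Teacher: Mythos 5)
Your proof is correct and follows essentially the same route as the paper, which justifies this lemma only by the one-line observation preceding it (the radial dilation $\rho\mapsto\rho+\chi(\rho)r(\Bsr)$ has derivative $1+\chi'(\rho)r(\Bsr)\geq\tfrac13$ by \eqref{chi:3} and Assumption~\ref{ass:rbd}, hence is strictly monotone) together with the block-triangular Jacobian and determinant bounds worked out afterwards in Lemma~\ref{lem:Jacobian} and \eqref{eq:Dform}--\eqref{eq:bdd}. Your write-up simply fills in the details the paper leaves implicit — global bijectivity via ray preservation, the regularity bookkeeping near the origin and across the cutoff spheres, and the $C^{m}$ regularity of the inverse via the implicit function theorem — all of which are sound.
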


\subsection{\cs{Variational formulation under pullback via the domain mapping $\Phi$}}
\label{sec:vf}

Recall that for all $r\in\Cr$ the domain $D(r)$ is contained in the ball $B_{2}$;
therefore 
the transmission problem \eqref{eq:htp} can be written as a variational problem on
$B_2$ involving the Dirichlet-to-Neumann map on $\partial B_2$. 
%
Given $g\in H^{1/2}(\partial B_2)$, let $v$ be the solution of
\begin{equation}\label{eq:DtN1}
  (-\wn^{-2}\Delta - 1)v=0 \quad\text{ in } \mathbb{R}^d \setminus \overline{B_2} 
  \quad\text{ and }\quad  
  v=g \text{ on } \Gamma_2 := \partial B_{2}
\end{equation}
satisfying the Sommerfeld radiation condition
\beq\label{eq:src}
  |\Bx|^{\frac{d-1}{2}} \left( \wn^{-1}\frac{\partial}{\partial |\Bx|}-\iu \right)v(\Bx)
  \to 0 \quad \text{as}\;|\Bx|\to\infty, \text{uniformly in $\Bx/|\Bx|$}.
  \eeq
  Then, define the map $\dtn: H^{1/2}(\partial B_2)\to H^{-1/2}(\partial B_2)$ by
\begin{equation*}
  (\dtn g)(\Bx) := \wn^{-1} \nabla v(\Bx)\cdot \Bn(\Bx)\;,\quad \Bx\in \partial B_{2},
\end{equation*}
where on $\partial B_2$ we have $\Bn(\Bx) := R^{-1} \Bx$, i.e.,
$\Bn(\Bx)=\frac{\Bx}{|{\Bx}|}$ 
is the outward-pointing unit normal vector to $B_2$.

Through the $r$-dependent domain $D=D(r)$, the weak solution $u\in H^{1}(B_{2})$ of the
transmission problem obviously depends on the radial displacement function $r\in \Cr$, with $\Cr$ defined
in Assumption~\ref{ass:rbd}; we therefore write $u=u(r)$, regarding $u$ as a mapping
$\Cr\to H^{1}(B_{2})$, which turns out to be convenient in \S\ref{sec:ParHolIntQuad}.
Recall that, by Assumption~\ref{ass:rbd}, $D=D(r)$ is contained in the ball $B_{2}$;
therefore the variational formulation of \eqref{eq:htp} is:

\noindent\fbox{%
  \begin{minipage}{0.975\textwidth}
    Given $L\in (\Hone[B_{2}])^*$, 
       $r\in \Cr$, 
        and piecewise-constant refractive index $n=n(r,\cdot)$ defined by \eqref{eq:n} and \eqref{eq:Dr},  
        find $u{(r;\cdot)}\in\Hone[B_{2}]$
    such that, for all $v \in \Hone[B_{2}]$,
    \begin{equation}
      \label{eq:vf}
      \int_{B_2}\Big(\wn^{-2}\nablabf u({r};\Bx) \cdot\overline{\nablabf v}(\Bx)
      - n(r;\Bx)u({r};\Bx)\overline{v}(\Bx)\Big) \rd \Bx -
      \int\nolimits_{\partial B_{2}} \hspace{-1ex}\wn^{-1}\dtn(\rst{u({r;\cdot})}{\partial
        B_{2}})\overline{v}\,\mathrm{d}S  
      =  L(v).
    \end{equation}
  \end{minipage}}
\medskip

If 
\begin{equation*}
  L(v):= \wn^{-1}\int\nolimits_{\partial B_{2}} \bigl(
  \wn^{-1}  \nablabf \uinc\cdot
  \Bn - \dtn(\rst{\uinc}{\partial
    B_{2}})\bigr)\,\overline{v}\,\mathrm{d}S,
\end{equation*}
then the solution of \eqref{eq:vf} is the restriction to $B_2$ of the solution of \eqref{eq:htp}.

We repeat that the variational problem \eqref{eq:vf} depends on
$r\in \Cr$ via $D=D(r)$ and \eqref{eq:n}. This dependence becomes more
apparent in the transformed variational problem formed by pulling back \eqref{eq:vf} to
the nominal setting with interface $\partial \refs{D}=\bbS^{d-1}$ through the
diffeomorphism $\refs{\Bx}\mapsto\Phibf(r;\refs{\Bx})$. As derived in
\cite[Section~4.1]{HSS15}, this transformed variational problem reads:
\medskip

\noindent\fbox{%
  \begin{minipage}{0.975\textwidth}
Find $\refs{u}\in\Hone[B_{2}]$ such that
    \begin{multline}
      \label{eq:vft}
        \int\nolimits_{B_{2}}
        \Big( 
        \wn^{-2}\refs{\VA}(r;\refs{\Bx}) \refs{\nablabf}
        \refs{u}(r;\refs{\Bx}) \cdot \overline{\refs{\nablabf} \refs{v}}(\refs{\Bx}) -
        \refs{n}(r;\refs{\Bx})\refs{u}(r;
        \refs{\Bx})\overline{\refs{v}}(\refs{\Bx})\Big) \rd \refs{\Bx} - \\
        \int\nolimits_{\partial B_{2}}\wn^{-1}
        \dtn(\rst{\refs{u}(r;\cdot)}{\partial B_{2}})\overline{\refs{v}}\,\mathrm{d}S  =
        \refs{L}(\refs{v}) \quad \forall \refs{v}\in \Hone[B_{2}]\;,
    \end{multline}
  \end{minipage}}
\begin{subequations}
  \label{eq:coeffs}
  \begin{flalign}
    \label{eq:At}
    \text{where}&& \refs{\VA}(r;\refs{\Bx}) & :=
    \Dervxt\Phibf(r;\refs{\Bx})^{-1}\Dervxt\Phibf(r;\refs{\Bx})^{-\top}
    \det \Dervxt\Phibf(r;\refs{\Bx}) ,&& \\
    \label{eq:nt}
    && \refs{n}(r;\refs{\Bx}) & := \det \Dervxt\Phibf(r;\refs{\Bx})
    n_{0}(\refs{\Bx})\;,\quad n_{0}(\refs{\Bx}) :=
    \begin{cases}
      n_{i} & \text{for}\; |\refs{\Bx}| < 1 ,\\
      1 &  \text{for}\; |\refs{\Bx}| \geq 1 ,
    \end{cases}
    && \\
    && \refs{L}(\refs{v}) & := L(\Phibf(r)^{*}v)\;,\,\,\text{ where } \,\,
    (\Phibf(r)^{*}v)(\xrf) := v(\Phibf(r;\xrf)).
  \end{flalign}
\end{subequations}
We recall that, by \eqref{eq:Phi},
$\Phibf(r,\refs{\Bx}) := \refs{\Bs}(\refs{\rho}+\chi(\refs{\rho})r(\refs{\Bs}))$,
$\refs{\Bs}\in\bbS^{d-1}$, $\refs{\Bx}=\refs{\rho}\refs{\Bs}$.

To understand how the coefficients $\refs{\VA}$ and $\refs{n}$ depend on the function
$r\in \Cr$, we establish their precise expressions.  The following elementary lemma
  shows that the Jacobian $\Dervxt\Phibf(r,\refs{\Bx})$ of $\Phibf(r;\xrf)$ has triangular
  structure with only off-diagonal depending on derivatives of $r(\Bsr)$. 

\begin{lemma}[Jacobian of transformation mapping]\label{lem:Jacobian}
  Given $r\in \Cr$, in polar/spherical coordinates the Jacobian of the
  mapping $\xrf\mapsto \Phibf(r;\xrf)$ from \eqref{eq:Phi} at $\xrf=\rr\Bsr\in B_{2}$,
  $0\leq \rr \leq R$, $\Bsr\in\bbS^{d-1}$, is
  \begin{gather}
    \label{eq:DPhi}
    \Dervxt\Phibf(r;\xrf) =
    \begin{bmatrix}
      1+\chi'(\rr)r(\Bsr) & \dfrac{\chi(\rr)}{\rr} \dfrac{d r}{d \Bsr}(\Bsr) \\
      \mathbf{0} & \left( 1+\dfrac{\chi(\rr)}{\rr}r(\Bsr) \right)\,\VI_{d-1}
    \end{bmatrix} \in \bbR^{d\times d}.
  \end{gather}
\end{lemma}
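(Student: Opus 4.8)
The plan is to compute the total derivative $\Dervxt\Phibf(r;\cdot)$ directly from \eqref{eq:Phi} and then represent it in the orthonormal moving frame adapted to polar/spherical coordinates, in which the claimed block-triangular structure becomes transparent. The whole lemma is really a careful frame computation: the only genuine content is that the map keeps each point on its own ray.

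First I would fix $\xrf=\rr\Bsr$ with $\rr>0$ and introduce angular coordinates $\theta=(\theta_1,\dots,\theta_{d-1})$ on $\bbS^{d-1}$ chosen so that the vectors $\Bt_j:=\partial_{\theta_j}\Bsr$ form an orthonormal basis of the tangent space to $\bbS^{d-1}$ at the base point (e.g.\ geodesic normal coordinates). Together with the radial unit vector $\Bsr$ this yields an orthonormal frame $\{\Bsr,\Bt_1,\dots,\Bt_{d-1}\}$ of $\bbR^d$ at $\xrf$, and the corresponding coordinate vector fields are $\partial_{\rr}\xrf=\Bsr$ (unit length) and $\partial_{\theta_j}\xrf=\rr\,\Bt_j$ (length $\rr$). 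The matrix in \eqref{eq:DPhi} is then, by definition, the representation of the linear map $\Dervxt\Phibf$ with respect to these orthonormal frames at source and target.

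The decisive structural observation is that $\Phibf(r;\xrf)=F\,\Bsr$ with $F:=\rr+\chi(\rr)\,r(\Bsr)$, so the image lies on the \emph{same} ray through the origin: the angular direction is preserved, and hence the \emph{same} frame $\{\Bsr,\Bt_1,\dots,\Bt_{d-1}\}$ serves as the adapted orthonormal frame at the target point $F\Bsr$. I would then differentiate $\Phibf=F\Bsr$ by the product rule, using $\partial_{\rr}\Bsr=\mathbf{0}$, $\partial_{\theta_j}\Bsr=\Bt_j$, and the chain rule $\partial_{\theta_j}\big(r(\Bsr)\big)=\big(\tfrac{dr}{d\Bsr}\big)_j$ (the $j$-th component of the tangential gradient of $r$ in the frame $\{\Bt_j\}$). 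This gives $\partial_{\rr}\Phibf=(1+\chi'(\rr)r(\Bsr))\,\Bsr$ and $\partial_{\theta_j}\Phibf=\chi(\rr)\big(\tfrac{dr}{d\Bsr}\big)_j\,\Bsr+F\,\Bt_j$.

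Finally I would read off the matrix by applying $\Dervxt\Phibf$ to the \emph{unit} source frame vectors. The radial one is $\Bsr$ itself, giving the first column $(1+\chi'(\rr)r(\Bsr),\mathbf{0})^{\top}$. The tangential unit vector is $\Bt_j=\rr^{-1}\partial_{\theta_j}\xrf$, so $\Dervxt\Phibf(\Bt_j)=\rr^{-1}\partial_{\theta_j}\Phibf=\tfrac{\chi(\rr)}{\rr}\big(\tfrac{dr}{d\Bsr}\big)_j\,\Bsr+\big(1+\tfrac{\chi(\rr)}{\rr}r(\Bsr)\big)\,\Bt_j$, where I used $F/\rr=1+\tfrac{\chi(\rr)}{\rr}r(\Bsr)$. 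Collecting these columns reproduces \eqref{eq:DPhi}, with the radial-direction preservation forcing the lower-left block to vanish and making the lower-right block the scalar $1+\tfrac{\chi(\rr)}{\rr}r(\Bsr)$ times $\VI_{d-1}$. The step most prone to slips — and the only one requiring real care — is the bookkeeping of the factor $\rr^{-1}$ from normalizing $\partial_{\theta_j}\xrf=\rr\Bt_j$ to unit length; it is exactly this normalization that turns the bare coordinate derivative $\chi(\rr)\tfrac{dr}{d\Bsr}$ into the $\tfrac{\chi(\rr)}{\rr}\tfrac{dr}{d\Bsr}$ off-diagonal block. (The apparent singularity at $\rr=0$ is harmless, since $\chi$ and $\chi'$ vanish for $\rr\leq\lambda$ by \eqref{chi:2}, so $\Phibf$ is the identity near the origin.)
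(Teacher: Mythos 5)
Your proof is correct, and it reaches \eqref{eq:DPhi} by a slightly different computational route than the paper. The paper avoids choosing any angular coordinates on $\bbS^{d-1}$: it writes the map as $\Bx\mapsto\psi(|\Bx|,\Bx/|\Bx|)\,\Bx/|\Bx|$, Taylor-expands $|\Bx+\Bh|$ and $(\Bx+\Bh)/|\Bx+\Bh|$ in a perturbation vector $\Bh$, and reads off the derivative through the orthogonal projections $\Op{P}_{r}(\Bx)$ and $\Op{P}_{\varphi}(\Bx)$ onto $\left<\Bx\right>$ and $\left<\Bx\right>^{\perp}$. You instead pick geodesic normal coordinates $\theta_j$ on the sphere so that $\{\Bsr,\Bt_1,\dots,\Bt_{d-1}\}$ is an orthonormal frame, and differentiate $F\Bsr$ with $F=\rr+\chi(\rr)r(\Bsr)$ by the product rule, normalizing $\partial_{\theta_j}\xrf=\rr\Bt_j$ to unit length. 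The shared structural insight is identical in both arguments — the map preserves rays, so the radial/tangential splitting is respected at source and target and the lower-left block vanishes — and both dispose of the origin the same way (the map is the identity for $\rr\leq\lambda$). Your version is arguably more transparent about where the factor $\rr^{-1}$ in the off-diagonal and lower-right blocks comes from; the paper's version buys coordinate-freeness on the sphere at the cost of a slightly heavier Taylor-expansion bookkeeping. Your bookkeeping of the $\rr^{-1}$ normalization and of $F/\rr=1+\tfrac{\chi(\rr)}{\rr}r(\Bsr)$ is exactly right, so there is no gap.
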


\begin{proof}
  For fixed $r\in C^{1}(\bbS^{d-1},\bbR^{+})$ the mapping
  $\xrf\mapsto \Phibf(r;\xrf) := \Bsr\bigl(\rr + \chi(\rr)r(\Bsr)\bigr)$,
  $\xrf = \rr\Bsr\in B_{2}\subset\bbR^{d}$, from \eqref{eq:Phi} is of the form
  \begin{gather}
    \label{eq:ganmap}
    B_{2} \to \bbR^{d}\quad,\quad \Bx \mapsto \Psibf(\Bx) := 
    \psi\bigl(|\Bx|,\frac{\Bx}{|\Bx|}\bigr)\,\frac{\Bx}{|\Bx|} ,
  \end{gather}
  with a continuously differentiable function $\psi:(0,2]\times\bbS^{d-1}\to\bbR$
  with $\psi(r,\Bsr) = r$ for small $r$. Hence it suffices to compute the derivative
  of $\Psibf$ for $\Bx\in B_{2}\setminus\{0\}$.

Let $\Bh\in\bbR^{d}$ be a sufficiently-small perturbation vector. By Taylor
  expansion, as $\Bh\to 0$,
  \begin{align}
    \label{eq:nas}
    |\Bx+\Bh| & = |\Bx| + \frac{\Bx^{\top}\Bh}{|\Bx|} + O(|\Bh|^{2}) ,\\
    \label{eq:inas}
    \frac{\Bx+\Bh}{|\Bx+\Bh|} & = \frac{\Bx}{|\Bx|} + \left( \frac{\VI}{|\Bx|} -
      \frac{\Bx\Bx^{\top}}{|\Bx|^{3}} \right)\Bh + O(|\Bh|^{2}) .
  \end{align}%
  We write $\partial_{1}\psi$ and $\partial_{2}\psi$ for the partial derivatives of $\psi$; 
  note that $\partial_{1}\psi$ is a scalar, and $\partial_{2}\psi$ is a $1\times d$ matrix (i.e., a row vector). 
  Using \eqref{eq:nas} and \eqref{eq:inas} followed by Taylor expansion gives us
  \begin{align*}
    \Psibf(\Bx+\Bh) & =
    \begin{aligned}[t] & 
      \psi\left( |\Bx| + \frac{\Bx^{\top}\Bh}{|\Bx|} + O(|\Bh|^{2}), \frac{\Bx}{|\Bx|} +
        \left( \frac{{\VI}}{|\Bx|} - \frac{\Bx\Bx^{\top}}{|\Bx|^{3}} \right){\Bh} + O(|\Bh|^{2})
      \right)\cdot \\ & \quad \left(\frac{\Bx}{|\Bx|} + \left( \frac{{\VI}}{|\Bx|} -
          \frac{\Bx\Bx^{\top}}{|\Bx|^{3}} \right){\Bh} + O(|\Bh|^{2}) \right)
    \end{aligned} \\
    & =  \begin{aligned}[t] &
      \left(
        \psi\Big(|\Bx|,\frac{\Bx}{|\Bx|}\Big) +
        \partial_{1}\psi\Big(|\Bx|,\frac{\Bx}{|\Bx|}\Big)\frac{\Bx^{\top}\Bh}{|\Bx|} +
        \partial_{2}\psi\Big(|\Bx|,\frac{\Bx}{|\Bx|}\Big)\left(
          \frac{{\VI}}{|\Bx|}-\frac{\Bx\Bx^{\top}}{|\Bx|^{3}} \right)\Bh + O(|\Bh|^{2})
      \right)\cdot \\
      &\quad  \left(\frac{\Bx}{|\Bx|} + \left( \frac{{\VI}}{|\Bx|} -
          \frac{\Bx\Bx^{\top}}{|\Bx|^{3}} \right){\Bh} + O(|\Bh|^{2}) \right)
    \end{aligned} \\
    & = \Psibf(\Bx) + 
    \begin{aligned}[t]
      & \frac{\Bx}{|\Bx|}\,\partial_{1}\psi\Big(|\Bx|,\frac{\Bx}{|\Bx|}\Big)\frac{\Bx^{\top}\Bh}{|\Bx|}
      + \frac{\Bx}{|\Bx|}\, \partial_{2}\psi\Big(|\Bx|,\frac{\Bx}{|\Bx|}\Big)
      \frac{1}{|\Bx|}\left( \VI - \frac{\Bx\Bx^{\top}}{|\Bx|^{\esc{2}}} \right)\Bh  \\ & 
+ \frac{1}{|\Bx|}\psi\Big(|\Bx|,\frac{\Bx}{|\Bx|}\Big) \left(
        \VI-\frac{\Bx\Bx^{\top}}{|\Bx|^{2}}\right)\Bh + O(|\Bh|^{2}).
    \end{aligned}
  \end{align*}%
Let $\Op{P}_{r}(\Bx)$ and $\Op{P}_{\varphi}(\Bx)$ denote the orthogonal projections
  onto $\left<\Bx\right>$ and $\left<\Bx\right>^{\perp}$, i.e., 
  \begin{gather}
    \label{eq:op}
    \Op{P}_{r}(\Bx)\Bh := \frac{\Bx\Bx^{\top}}{|\Bx|^{2}}\Bh \quad\text{and}\quad
    \Op{P}_{\varphi}(\Bx)\Bh := \left( \VI - \frac{\Bx\Bx^{\top}}{|\Bx|^{2}} \right)\Bh.
  \end{gather}
    Abbreviating $\Bsr := {\Bx}/{|\Bx|}$ (i.e., the radial unit vector) and $\rho:=|\Bx|$, we therefore have 
  \begin{gather*}
    \Derv\Psibf(\Bx)\Bh =
    \Bsr\,\partial_{1}\psi(\rho,\Bsr)\Bsr^{\top}\Op{P}_{r}(\Bx)\Bh +
    \frac{\Bsr}{\rho}\, \partial_{2}\psi(\rho,\Bsr)
    \Op{P}_{\varphi}(\Bx)\Bh
    + \frac{1}{\rho}\,\psi(\rho,\Bsr) \Op{P}_{\varphi}(\Bx)\Bh .
  \end{gather*}
The result then follows since the projections  $\Op{P}_{r}(\Bx)$ and $\Op{P}_{\varphi}(\Bx)$ split the
  perturbation vector $\Bh$ into its radial and angular components and 
 $\psi(\rho,\Bsr) =
  \rho+\chi(\rho)r(\Bsr)$.
\end{proof}

Lemma \ref{lem:Jacobian} immediately implies that for $\xrf=\rr\Bsr\in B_{2}$
\begin{subequations}
  \label{eq:Dform}
  \begin{align}
    \label{eq:DPd}
    \det\Dervxt\Phibf(r;\xrf) & = \left( 1+\chi'(\rr)r(\Bsr)  \right)
    \left( 1+\tfrac{\chi(\rr)}{\rr}r(\Bsr) \right)^{d-1},\\
    \label{eq:DPi}
    \Dervxt\Phibf(r;\xrf)^{-1} & =
    \frac{1}{\det\Dervxt\Phibf(r;\xrf)}
    \begin{bmatrix}
      1+\dfrac{\chi(\rr)}{\rr}r(\Bsr) & - \dfrac{\chi(\rr)}{\rr} \dfrac{d r}{d
        \Bsr}(\Bsr) \\
      \mathbf{0} & \left( 1+\chi'(\rr)r(\Bsr) \right)\,\VI_{d-1}
    \end{bmatrix}\;.
  \end{align}
\end{subequations}
By \eqref{chi:3}, the bounds for $r(\Bsr)$, $-1/3 \leq r(\Bsr) \leq 1/3$ for 
$\Bsr\in\bbS^{d-1}$ (by Assumption~\ref{ass:rbd}), imply  
\begin{gather*}
  \dfrac53 \geq |1+\chi'(\rr)r(\Bsr)| \geq \dfrac{1}{3} \quad\text{and}\quad
  \dfrac53 \geq \Big|1+\frac{\chi(\rr)}{\rr}r(\Bsr)\Big| \geq \dfrac13 
\quad  \text{ for all }\Bsr \in\mathbb{S}^{d-1}, 0\leq \rr \leq 2;
\end{gather*}
which, in turn, imply
\begin{gather}
  \label{eq:bdd}
  (5/3)^{d} \geq |\det\Dervxt\Phibf(r,\xrf)| \geq 3^{-d}
  \quad
\text{ for all } r\in \Cr \text{ and } \xrf \in B_2,
\end{gather}
and, by estimating the Euclidean matrix norm via the Frobenius norm, 
for all $\refs{\Bx} \in B_{2}$, $r\in\mathcal{R}$
\begin{gather}
  \label{eq:Mest}
  \N{\Dervxt\Phibf(r,\xrf)}_{2}^{2} \leq \tfrac53 d + 2
  \N{\frac{dr}{ds}(\Bsr)}_{2}^{2}\;,\quad
  \N{\Dervxt\Phibf(r,\xrf)^{-1}}_{2}^{2} \leq {3^{d}}\left( \tfrac53 d + 2
    \N{\frac{dr}{ds}(\Bsr)}_{2}^{2} \right).
\end{gather}
The following result -- equivalence of norms of functions under the
transformation $\xrf\mapsto\Phibf(r;\xrf)$ -- is a consequence of
\eqref{eq:bdd} and \eqref{eq:Mest}. 
For a proof, see \cite[Lemma~3.4]{HSS15} and \cite[Appendix~E]{SCA16}.

\begin{lemma}[Transformation of norms]
  \label{lem:normtrf}
  For $r\in\Cr$ define the pullback $\Phibf(r)^{*}v$ as
  $(\Phibf(r)^{*}v)(\xrf) := v(\Phibf(r;\xrf))$ for a function $v:B_{2}\to \bbR$.
  \begin{enumerate}[label={(\roman{*})}]
  \item If ${v \in \Ltwo[B_{2}]}$, then ${ \Phibf(r)^{*}v \in \Ltwo[B_{2}]}$ and
    \begin{gather}
      \label{eq:l2trf}
      \left( \tfrac35 \right)^{\frac{d}{2}}\NLtwo[B_{2}]{v} \leq \NLtwo[B_{2}]{\Phibf(r)^{*}v} 
      \leq 3^{d/2} \NLtwo[B_{2}]{v}
      \;.
    \end{gather}
  \item If $v \in \Hone[B_{2}]$,  then $ \Phibf(r)^{*}v \in \Hone[B_{2}]$ and 
    \begin{align}
\nonumber
      \left( \tfrac35 \right)^{\frac{d}{2}} \frac{1}{3^{d}\left( \tfrac53 d + 2
          \N{\tfrac{dr}{ds}}_{C^{0}(\bbS^{d-1})}^{2} \right)}\NHone[B_{2}]{v} &\leq
      \NHone[B_{2}]{\Phibf(r)^{*}v} \\ &\leq  3^{d/2} 
      \left( \tfrac53 d + 2 \N{\tfrac{dr}{ds}}_{C^{0}(\bbS^{d-1})}^{2} \right) 
      \NHone[B_{2}]{v}\;.
              \label{eq:h1trf}
    \end{align}
  \item If $r\in C^{2}(\bbS^{d-1})\cap \Cr$ and $v \in \Hm[B_{2}]{2}$, then 
    $\Phibf(r)^{*}v \in \Hm[B_{2}]{2}$ and there exists $C_{2}>0$ depending on $d$ and
    $\N{r}_{C^{2}(\bbS^{d-1})}$ such that
    \begin{gather}
      \label{eq:h2trf}
      C_{2}^{-1}\NHm[B_{2}]{v}{2} \leq \NHm[B_{2}]{\Phibf(r)^{*}v}{2} \leq
      C_{2}\NHm[B_{2}]{v}{2}\; .
    \end{gather}
  \end{enumerate}
\end{lemma}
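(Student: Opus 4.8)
The plan is to obtain all three equivalences from the change-of-variables formula, exploiting that by Lemma~\ref{lem:propphi} the map $\Phibf(r;\cdot)$ is a diffeomorphism of $B_2$ onto itself, together with the pointwise bounds \eqref{eq:bdd} on $\det\Dervxt\Phibf$ and \eqref{eq:Mest} on $\Dervxt\Phibf$ and its inverse. Throughout I write $w:=\Phibf(r)^{*}v = v\circ\Phibf(r;\cdot)$ and use the substitution $\Bx=\Phibf(r;\xrf)$, under which $\rd\xrf = \SN{\det\Dervxt\Phibf(r;\xrf)}^{-1}\,\rd\Bx$.

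For part (i) I would write $\NLtwo[B_2]{w}^2 = \int_{B_2}\SN{v(\Phibf(r;\xrf))}^2\,\rd\xrf$ and substitute $\Bx=\Phibf(r;\xrf)$ to get $\NLtwo[B_2]{w}^2 = \int_{B_2}\SN{v(\Bx)}^2\,\SN{\det\Dervxt\Phibf}^{-1}\,\rd\Bx$. Inserting the two-sided determinant bound \eqref{eq:bdd}, i.e. $3^{-d}\le\SN{\det\Dervxt\Phibf}\le(5/3)^d$, and taking square roots yields \eqref{eq:l2trf} immediately.

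For part (ii) the chain rule gives $\nablabf w(\xrf)=\Dervxt\Phibf(r;\xrf)^{\top}\,(\nablabf v)(\Phibf(r;\xrf))$. For the upper bound I would estimate $\SN{\nablabf w}\le\N{\Dervxt\Phibf}_2\,\SN{(\nablabf v)\circ\Phibf}$ pointwise, change variables, and invoke \eqref{eq:Mest} and \eqref{eq:bdd}; combined with the $\Ltwo[B_2]$ estimate from (i) this bounds $\NHone[B_2]{w}$ by a multiple of $\NHone[B_2]{v}$. For the lower bound I would instead write $(\nablabf v)\circ\Phibf = \Dervxt\Phibf^{-\top}\nablabf w$, so that $\SN{(\nablabf v)\circ\Phibf}\le\N{\Dervxt\Phibf^{-1}}_2\,\SN{\nablabf w}$, change variables in the reverse direction (picking up the factor $\SN{\det\Dervxt\Phibf}$), and use the bound on $\N{\Dervxt\Phibf^{-1}}_2$ from \eqref{eq:Mest}. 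The constants this argument produces are in fact slightly sharper than those displayed in \eqref{eq:h1trf}, but since $\tfrac53 d + 2\N{\tfrac{dr}{ds}}_{C^0(\bbS^{d-1})}^2\ge 1$ the weaker stated inequalities follow at once.

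The only genuinely new work is part (iii), which I regard as the main obstacle. Here I would apply the second-order chain rule, schematically $\partial_j\partial_k w = \sum_{i,l}(\partial_i\partial_l v)(\Phibf)\,\partial_j\Phibf_i\,\partial_k\Phibf_l + \sum_i(\partial_i v)(\Phibf)\,\partial_j\partial_k\Phibf_i$. Taking $\Ltwo[B_2]$ norms and changing variables, the first (second-derivative) group is controlled by $\sup_{B_2}\N{\Dervxt\Phibf}_2^4$ times a determinant factor times $\NHm[B_2]{v}{2}^2$, while the second (first-derivative) group is controlled by $\sup_{B_2}\N{\Dervxt^2\Phibf}_2^2$ times a determinant factor times $\NHone[B_2]{v}^2$, the latter being already controlled via part~(ii). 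The crucial point is that $\Dervxt^2\Phibf$ is bounded: differentiating $\Phibf(r;\xrf)=\Bsr(\rr+\chi(\rr)r(\Bsr))$ twice produces only the factors $\chi,\chi',\chi''$, all bounded because the fixed $\chi\in C^\infty(\cointv{0,\infty})$ has compact support in $\rr$, together with $r,r',r''$, bounded by $\N{r}_{C^2(\bbS^{d-1})}$ (here the hypothesis $r\in C^2$ enters, and the geometric second derivatives inherent in the polar representation of Lemma~\ref{lem:Jacobian} are also bounded). Hence $\sup_{B_2}\N{\Dervxt^2\Phibf}_2$ is bounded by a constant depending only on $d$ and $\N{r}_{C^2(\bbS^{d-1})}$; absorbing all resulting factors into a single $C_2$ gives the upper bound in \eqref{eq:h2trf}, and the lower bound follows by the same inversion trick as in (ii), now using that $\Phibf(r;\cdot)^{-1}$ inherits $C^2$ regularity from Lemma~\ref{lem:propphi}. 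The obstacle is purely the bookkeeping of these mixed terms; no estimate beyond boundedness of $\chi''$ and $r''$ is required.
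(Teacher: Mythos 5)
Your argument is correct, and it is essentially the proof the paper intends: the paper itself only cites \cite[Lemma~3.4]{HSS15} and \cite[Appendix~E]{SCA16} for this lemma, and those references use exactly this change-of-variables plus chain-rule argument based on the determinant bound \eqref{eq:bdd}, the Jacobian bounds \eqref{eq:Mest}, and (for part (iii)) boundedness of the second derivatives of $\Phibf(r;\cdot)$ guaranteed by $r\in C^{2}$ and the cut-off $\chi$. Your observation that the resulting constants in \eqref{eq:h1trf} are slightly sharper than those displayed is also accurate, and the stated bounds follow since $\tfrac53 d + 2\N{\tfrac{dr}{ds}}_{C^{0}(\bbS^{d-1})}^{2}\geq 1$.
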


\section{$\wn$-Explicit Norm Bounds for the Solution of the Transmission Problem}
\label{sec:MS}

Recall from Section~\ref{ss:nonres} that we are particularly interested in understanding
how the domain of analyticity of $r\mapsto \wh{u}(r)$, with $\wh{u}(r)$ the solution of
\eqref{eq:vft}, behaves as the wavenumber $\wn$ becomes large. The key tools are the
results of \cite{MS19} concerning the stability of the transmission problem in
variational formulation \eqref{eq:vf}.  
These results are stated in the following $\wn$-scaled norms 
\begin{gather}
  \label{eq:weighted_norms}
  \N{u}^2_{H^1_\wn (B_2)}:= \wn^{-2}\N{\nabla u}^2_{L^2(B_2)} + \N{u}^2_{L^2(B_2)}\;,
  \quad 
  \N{L}_{(H^1_\wn (B_2))^*}:= \sup_{v\in H^1(B_2)\setminus\{0\}} \frac{|L(v)|}{\N{v}_{H^1_\wn (B_2)}}\;,
\end{gather}
and we record for later the definition that
\begin{gather}
  \label{eq:weighted_H2}
  \N{u}^2_{H^2_\wn (B_2)}:= \wn^{-4}|u|_{H^2(B_2)}^2+ \wn^{-2}\N{\nabla u}^2_{L^2(B_2)} + \N{u}^2_{L^2(B_2)}\;.
\end{gather}

As discussed in \S\ref{ss:scuq}-\ref{ss:nonres}, the behaviour of the norm of the solution
operator of the transmission problem with respect to $\wn$ depends on whether
$n_i<1$ or $n_i>1$; if $n_i<1$ the norm of the solution operator from $L^2(B_2)\to L^2(B_2)$ grows
like $\wn$ (see \eqref{eq:MS1} below); if $n_i>1$ the solution operator can grow
exponentially through $\wn_j$ for a discrete sequence $0<\wn_1<\wn_2<\ldots \infty$
\cite{PoVo:99}, \cite{St:00}, \cite[\S6]{MS19}, although the growth for ``most" $\wn$ is
at most algebraic in $\wn$ \cite{LaSpWu:21}.  We now recall bounds from \cite[Theorem
3.1]{MS19} on the solution when $n_i<1$.
%

\begin{theorem}[$\wn$-explicit bound on the solution of the 
    transmission problem from \cite{MS19}]
  \label{thm:MS}
  Let $D$ be a star-shaped Lipschitz domain, and let $0<n_{i}<1$. 
  
  (i) If $L(v):= \int_{B_2} f \, \overline{v} \, \rd S$ with $f \in L^2(B_2)$, 
  then the solution $u$ of the variational problem \eqref{eq:vf} satisfies
  \begin{equation}\label{eq:MS1}
    \N{u}_{H^1_\wn (B_2)} \leq {\Csolo(\wn, n_i)} \N{f}_{L^2(B_2)}\;,
  \end{equation}
  where
  \begin{equation}\label{eq:MS2}
    \Csolo(\wn, n_i) := 
    \frac{4\wn}{\sqrt{n_i}} \sqrt{
      1 + \frac{1}{n_i}\left(1+\frac{d-1}{4\wn}\right)^2
    }.    
  \end{equation}

  (ii) Given $L \in (H^1(B_2))^*$, the solution $u$ of the variational problem \eqref{eq:vf} satisfies
  \begin{equation}\label{eq:MS4}
    \N{u}_{H^1_\wn (B_2)} \leq \Csolt(\wn, n_i) \N{L}_{(H^1_\wn (B_2))^*},
  \end{equation}
  where 
  \begin{equation}\label{eq:MS5}
    \Csolt(\wn,n_{i}) := \frac{1}{n_i} \Big( 1 +2 \Csolo(\wn, n_i)\Big).
  \end{equation}
\end{theorem}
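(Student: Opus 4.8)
The plan is to treat the two parts quite differently, because part~(i) is nothing but \cite[Theorem~3.1]{MS19} specialised to the piecewise-constant index \eqref{eq:n} on $B_2$: I would simply invoke that result and then verify that its constant coincides with \eqref{eq:MS2}. Their bound rests on a Rellich--Morawetz identity, testing the equation with the multiplier $\Bx\cdot\nabla u+\tfrac{d-1}{2}u$, exploiting star-shapedness of $D$ with respect to $0$ (which gives the interface term a favourable sign), the hypothesis $0<n_i<1$, and the sign of $\Im\langle\dtn u,u\rangle$ furnished by the radiation condition to absorb the boundary contribution on $\partial B_2$. Tracking the numerical constants through that computation is the only labour here, and I expect it to be tedious bookkeeping rather than a conceptual obstacle.

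The genuinely interesting step is deducing part~(ii) from part~(i), and the explicit shape $\Csolt=n_i^{-1}(1+2\Csolo)$ in \eqref{eq:MS5} essentially dictates the mechanism. Writing the sesquilinear form of \eqref{eq:vf} as
\[
a(u,v):=\int_{B_2}\big(\wn^{-2}\nabla u\cdot\overline{\nabla v}-n\,u\overline v\big)\,\rd\Bx-\wn^{-1}\int_{\partial B_2}\dtn(u)\,\overline v\,\rd S,
\]
I would introduce the \emph{sign-definite companion form} obtained by flipping only the zeroth-order sign while retaining the identical DtN coupling,
\[
a_+(w,v):=\int_{B_2}\big(\wn^{-2}\nabla w\cdot\overline{\nabla v}+n\,w\overline v\big)\,\rd\Bx-\wn^{-1}\int_{\partial B_2}\dtn(w)\,\overline v\,\rd S.
\]
Using $n\geq n_i$ together with the classical dissipativity property $\Re\langle\dtn w,w\rangle\leq 0$ of the exterior Helmholtz DtN map, one obtains $\Re a_+(w,w)\geq \wn^{-2}\N{\nabla w}_{L^2(B_2)}^2+n_i\N{w}_{L^2(B_2)}^2\geq n_i\N{w}_{H^1_\wn(B_2)}^2$, the last inequality using $n_i\leq 1$. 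Hence $a_+$ is coercive with constant $n_i$, so Lax--Milgram yields a unique $\psi\in H^1(B_2)$ with $a_+(\psi,v)=L(v)$ for all $v$ and $\N{\psi}_{H^1_\wn(B_2)}\leq n_i^{-1}\N{L}_{(H^1_\wn(B_2))^*}$.

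I would then set $\tilde u:=u-\psi$ and subtract the two forms: the gradient and DtN terms cancel \emph{exactly}, leaving $a(\tilde u,v)=L(v)-a(\psi,v)=a_+(\psi,v)-a(\psi,v)=2\int_{B_2}n\psi\,\overline v\,\rd\Bx$ for all $v$. Thus $\tilde u$ solves the transmission problem with the purely $L^2$ datum $f=2n\psi$, and part~(i) applies to give $\N{\tilde u}_{H^1_\wn(B_2)}\leq \Csolo\,\N{2n\psi}_{L^2(B_2)}\leq 2\Csolo\N{\psi}_{H^1_\wn(B_2)}$, where I use $0\leq n\leq 1$. The triangle inequality $\N{u}_{H^1_\wn(B_2)}\leq\N{\psi}_{H^1_\wn(B_2)}+\N{\tilde u}_{H^1_\wn(B_2)}$ together with the bound on $\N{\psi}_{H^1_\wn(B_2)}$ then reproduces precisely $n_i^{-1}(1+2\Csolo)\N{L}_{(H^1_\wn(B_2))^*}$, which is \eqref{eq:MS4}--\eqref{eq:MS5}. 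The single delicate point -- and the one I would be most careful about -- is the design of the companion form: keeping the \emph{same} DtN term is exactly what makes the remainder datum purely $L^2$ (so that part~(i) is usable), whereas flipping only the sign of $n$ is what renders the real part coercive once the dissipativity of the DtN map is invoked.
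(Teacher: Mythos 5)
Your proposal is correct and follows essentially the same route as the paper: part (i) is obtained by specialising \cite[Theorem~3.1]{MS19} (the paper additionally notes the bookkeeping choices $a_i=a_o=A_D=A_N=n_o=1$ and replacing $\mathrm{diam}(\Omega_i)$ by $2$), and part (ii) is deduced from part (i) by the coercive sign-flipped companion form. The paper simply cites \cite[Theorem~5.1]{GPS19} for this second step, and the argument you spell out --- Lax--Milgram for $a_+$ with coercivity constant $n_i$ via $n\geq n_i$ and $\Re\langle\dtn w,w\rangle\leq 0$, then applying part (i) to $u-\psi$ with the $L^2$ datum $2n\psi$ --- is exactly the content of that cited result, reproducing the constant $n_i^{-1}(1+2\Csolo)$.
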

The key points of this result for the present study are that 
(i) as $\wn \to\infty$, $\Csolo$ and $\Csolt$ both grow like $\wn$, 
and (ii) the ($\wn$-explicit) constants $\Csolo$ and $\Csolt$ are independent of $D$; i.e., 
the bounds \eqref{eq:MS1} and \eqref{eq:MS4} hold uniformly across all 
star-shaped Lipschitz domains.

\bpf[References for the proof]
The bound \eqref{eq:MS1} is proved in \cite[Theorem 3.1]{MS19}; 
indeed, \eqref{eq:MS1} follows from \cite[Equation 3.2]{MS19} by {(i)} choosing $a_i=a_o=A_D=A_N=n_o=1$ 
and 
(ii)
by bounding $\wn^{-2}\|\nabla u\|^2_{L^2(B_2)} + n_i \|u\|_{L^2(B_2)}^2$ 
below by 
$n_i(\wn^{-2}\|\nabla u\|^2_{L^2(B_2)} + \|u\|_{L^2(B_2)}^2)$, 
and (iii) noting that, in the proof of \cite[Theorem 3.1]{MS19}, 
${\rm diam}(\Omega_i)$ can be replaced by $2$ 
(when going from \cite[Equation 5.3]{MS19} to \cite[Equation 5.4]{MS19}).
Having proved the bound \eqref{eq:MS1}, the bound \eqref{eq:MS4} follows by, e.g., \cite[Theorem 5.1]{GPS19}.
\epf

\begin{remark}[The Helmholtz exterior Dirichlet problem]
  The analogous $\wn$-explicit bounds on the solution of the Helmholtz exterior Dirichlet
  problem that hold uniformly for all Lipschitz star-shaped $D$ were proved in
  \cite[Corollary 3.9]{CM08} (see also \cite[Theorem 2.5]{GPS19}). The appropriate
  analogues of the results in the next section about $\wn$-explicit holomorphic dependence
  of the solution on the radial displacement function could therefore also be obtained for the
  Helmholtz exterior Dirichlet problem using these bounds in place of Theorem
  \ref{thm:MS}.
\end{remark}

\section{$\wn$-Explicit Holomorphic Dependence of the Solution on the Radial Displacement
  Function} 
\label{sec:FreDepHol}

For an approximation error analysis of polynomial surrogate models we invoke techniques
from complex analysis.  This requires extending the mapping $r\in\Cr\mapsto \wh{u}(r)$,
defined as the solution of the variational problem \eqref{eq:vft}, to complex-valued
radial displacement functions $r$, for which we adopt the notation $\rC$. 
The definition
of the set $\Cr$ from Assumption~\ref{ass:rbd} straightforwardly extends to $\bbC$-valued
$r$. 
Then we write
\begin{gather}
  \label{eq:rCS}
  \rCS:=\Big\{\rC\in C^{1}(\bbS^{d-1},\bbC):\,\N{\Re \rC}_{C^{0}(\bbS^{d-1})}\leq \frac13\Big\}\;.
\end{gather}
As explained in Section~\ref{ss:uqpoly} our key task is to establish the holomorphy of
$\rC\mapsto \hat{u}(\rC)\in H^{1}(B_{R})$ and identify its domain of analyticity and its
dependence on $\wn$.
From now on we assume that $\wn \geq 1/2$ to avoid degeneracy of 
the constants in the bounds of Theorem \ref{thm:MS}.

For $\rC\in\rCS$ let $\refs{u}(\rC)$ be the solution of the following variational
problem:~given $\refs{L}\in (H^1(B_2))^*$, 
\begin{equation} \label{eq:uhatzdef}
\mbox{find} \; \refs{u}(\rC) \in H^1(B_2) \; \mbox{such that} 
\quad 
\refs{a}(\rC;\refs{u},\refs{v}) = \widehat{L}(\refs{v}) 
\quad 
\mbox{for all}
\;\;
\refs{v}\in \Hone[B_{2}]
\;.
\end{equation}
Here, for $\rC \in \rCS$, $\refs{a}(\rC;\cdot , \cdot )$ denotes the sesquilinear form
from \eqref{eq:vft}, \eqref{eq:coeffs}, i.e.
\begin{gather}
  \label{eq:ahatdef}
  \begin{aligned}
    \refs{a}(\rC;\refs{u},\refs{v}) := & \itg[B_{2}]{\wn^{-2}\refs{\VA}(\rC;\refs{\Bx})
      \refs{\nablabf} \refs{u}(\rC;\refs{\Bx}) \cdot \overline{\refs{\nablabf}
        \refs{v}}(\refs{\Bx}) -\refs{n}(\rC;\refs{\Bx})
      \refs{u}(\rC;\refs{\Bx})\overline{\refs{v}}(\refs{\Bx})}[\mathrm{d}\refs{\Bx}] -
    \int\limits_{\partial B_{2}}\wn^{-1} \dtn(\rst{\refs{u}(\rC;\cdot)}{\partial
      B_{2}})\overline{\refs{v}}\,\mathrm{d}S,
  \end{aligned}
\end{gather}
with (now complex-valued) coefficients $\refs{\VA}(\rC,\cdot)$ and $\refs{n}(\rC,\cdot)$,
still given by the same formulae \eqref{eq:coeffs}.
The main result of this section is the following. 
\begin{theorem}[$\wn$-explicit condition for existence of $\refs{u}(\rC;\cdot)$ and related bounds]
  \label{thm:existence}\mbox{ } \newline
  (i) In the setting of \S\ref{sec:param}, 
      fix $n_i<1$, $d$, and, in addition, a constant $C_{\Re}\geq 2$.

  Then there exist $C_{\Im},C_1 >0, k_0 \geq \frac12$ 
  such that 
  for all $k\geq k_0$, 
  for any $\refs{L}\in (H^1_\wn (B_2))^*$, and 
  for any
  \begin{equation}
    \label{eq:holo_thm1}
    \rC \in \rCS \quad\text{with}\quad \N{\Re \rC}_{C^{1}(\bbS^{d-1})}\leq C_{\Re} \quad\text{and}\quad
    \wn  \N{\Im \rC}_{C^{1}(\bbS^{d-1})} \leq C_{\Im}, 
  \end{equation}
  the solution $\refs{u}(\rC;\cdot)$ of \eqref{eq:uhatzdef} 
  exists, is unique, and
  satisfies the bound
  \begin{equation}
    \label{eq:holo_thm2}
    \N{\refs{u}(\rC;\cdot)}_{H^1_\wn (B_2)} \leq C_1 \wn  \big\|\refs{L}\big\|_{(H^1_\wn
      (B_2))^*}\;.
  \end{equation}
  
  (ii) If, in addition,
  \begin{gather*}
    \rC\in C^2(\mathbb{S}^{d-1}) \quad\text{and}\quad \|\rC\|_{C^2(\mathbb{S}^{d-1})}\leq
    C_{S} \quad\text{for some}\quad C_{S}>0,
  \end{gather*}
  then there exists $C_2=C_{2}(C_{S})>0$ 
  such that, if
  $\refs{L}(\refs{v}):= \int_{B_2}f\,\overline{\refs{v}}$ 
  for $f\in L^2(B_2)$ and $k\geq k_0$, then
  \begin{equation}
    \label{eq:holo_thm2_H2}
    \N{\refs{u}(\rC;\cdot)}_{H^2_\wn (B_2)} \leq C_2 \wn  \big\|f\big\|_{L^2(B_2)}.
  \end{equation} 
\end{theorem}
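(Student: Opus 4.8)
The plan is to treat the complex radial displacement $\rC$ as a perturbation of its real part $r_0:=\Re\rC$, and to construct the solution of \eqref{eq:uhatzdef} from the solution of the \emph{real} transformed problem $\refs{a}(r_0;\cdot,\cdot)$, for which Theorem~\ref{thm:MS} is available. First I would establish well-posedness of the real transformed problem with a $\wn$-explicit bound. Since $\N{r_0}_{C^0(\bbS^{d-1})}\leq\tfrac13$, the domain $D(r_0)$ is star-shaped and Lipschitz, so Theorem~\ref{thm:MS} applies to the untransformed problem \eqref{eq:vf} on $D(r_0)$. Because $\Phibf(r_0)$ is the identity outside the shell $\{\lambda<|\xrf|<2-\lambda\}$, and in particular on $\partial B_2$, the change of variables used to derive \eqref{eq:vft} gives $\refs{a}(r_0;\Phibf(r_0)^*u,\Phibf(r_0)^*v)=a(r_0;u,v)$ with the $\dtn$-term unchanged; thus the solution operator of $\refs{a}(r_0;\cdot,\cdot)$ is conjugate to that of \eqref{eq:vf} via the pullback $\Phibf(r_0)^*$. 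Lemma~\ref{lem:normtrf}(ii) shows this pullback is bounded with bounded inverse on $\Hone[B_{2}]$, with constants depending only on $d$ and $\N{r_0}_{C^1(\bbS^{d-1})}\leq C_{\Re}$, and the same holds in the weighted norm $\N{\cdot}_{H^1_\wn (B_2)}$ since the transformation rescales the gradient and zeroth-order parts by comparable, $\wn$-independent factors. Combined with \eqref{eq:MS4} (where $\Csolt=O(\wn)$), this yields a solution operator $S_0$ for $\refs{a}(r_0;\cdot,\cdot)$ with $\N{S_0}_{(H^1_\wn (B_2))^*\to H^1_\wn (B_2)}\leq C_0(C_{\Re})\,\wn$.

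Next I would set up the perturbation. Writing $\refs{a}(\rC;\cdot,\cdot)=\refs{a}(r_0;\cdot,\cdot)+\refs{b}(\rC;\cdot,\cdot)$, the form $\refs{b}$ collects the differences of the coefficients \eqref{eq:At}, \eqref{eq:nt} evaluated at $\rC$ and at $r_0$. By the explicit formulae of Lemma~\ref{lem:Jacobian} and \eqref{eq:Dform}, these coefficients are holomorphic in $(\rC,\tfrac{dr}{ds})$ as long as $\det\Dervxt\Phibf(\rC;\cdot)$ stays away from $0$; since $\N{\Im\rC}_{C^0(\bbS^{d-1})}\leq C_{\Im}/\wn\leq C_{\Im}/k_0$, choosing $k_0$ large (for the given $C_{\Im}$) keeps $\det\Dervxt\Phibf(\rC)$ bounded away from $0$ by the mechanism behind \eqref{eq:bdd}, so that $\refs{\VA}(\rC)$, $\refs{n}(\rC)$ are well defined and $\N{\refs{\VA}(\rC)-\refs{\VA}(r_0)}_{L^\infty(B_2)}+\N{\refs{n}(\rC)-\refs{n}(r_0)}_{L^\infty(B_2)}\leq C(C_{\Re})\,\N{\Im\rC}_{C^1(\bbS^{d-1})}$. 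Passing to the weighted norm, the induced operator $\delta A$ obeys $\N{\delta A}\leq C_b(C_{\Re})\,\N{\Im\rC}_{C^1(\bbS^{d-1})}$, whence $\N{S_0\,\delta A}\leq C_0C_b\,\wn\,\N{\Im\rC}_{C^1(\bbS^{d-1})}\leq C_0C_b\,C_{\Im}$ by \eqref{eq:holo_thm1} --- a bound \emph{independent of} $\wn$. Choosing $C_{\Im}$ small enough that $C_0C_bC_{\Im}\leq\tfrac12$, a Neumann series shows $\Id+S_0\delta A$ is invertible with inverse bounded by $2$; since $A_{\rC}=A_0(\Id+S_0\delta A)$ this gives existence, uniqueness, and $\N{\refs{u}(\rC;\cdot)}_{H^1_\wn (B_2)}\leq 2C_0\,\wn\,\N{\refs{L}}_{(H^1_\wn (B_2))^*}$, i.e.\ \eqref{eq:holo_thm2} with $C_1=2C_0$.

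For part (ii) I would bootstrap to $H^2$ using the PDE. The principal coefficient in \eqref{eq:htp1} is the constant $\wn^{-2}$ on both sides of the interface, so the transmission conditions are continuity of $u$ and of $\partial_n u$; consequently, for $f\in L^2(B_2)$ and a smooth interface the solution is \emph{globally} in $H^2(B_2)$, since piecewise $H^2$-regularity together with $C^1$-matching across $\Gamma(r_0)$ leaves no singular part in $D^2u$. For the real problem I would use $\wn^{-2}\Delta u=-nu-f$ with the $H^1_\wn$ bound \eqref{eq:MS1} to get $\wn^{-2}\N{\Delta u}_{L^2(B_2)}\leq C\wn\N{f}_{L^2(B_2)}$, and then invoke $\wn$-\emph{independent} piecewise elliptic regularity for the transmission problem, the constant being geometric and controlled by $\N{r_0}_{C^2(\bbS^{d-1})}\leq C_S$, to reach $\N{u}_{H^2_\wn (B_2)}\leq C(C_S)\wn\N{f}_{L^2(B_2)}$; Lemma~\ref{lem:normtrf}(iii), valid because $\rC\in C^2$, transfers this to $\refs{u}(r_0)$. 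For the genuinely complex case I would apply the analogous $\wn$-explicit $H^2$ regularity directly to the transformed equation $-\wn^{-2}\refs{\nablabf}\cdot(\refs{\VA}(\rC)\refs{\nablabf}\refs{u})-\refs{n}(\rC)\refs{u}=(f\circ\Phibf(\rC))\det\Dervxt\Phibf(\rC)$, whose coefficients are $C^1$ up to the fixed smooth interface $\bbS^{d-1}$ (since $\rC\in C^2$) and whose principal part $\refs{\VA}(\rC)$ is continuous across $\bbS^{d-1}$, bounding the Hessian by the $L^2$ data and the $H^1_\wn$ norm already controlled by part (i).

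The main obstacle I anticipate is keeping every constant \emph{$\wn$-explicit}. Two points are delicate: in part (i), the precise match between the $O(\wn)$ growth of $\N{S_0}$ and the $O(\wn^{-1})$ smallness of $\N{\Im\rC}_{C^1(\bbS^{d-1})}$ forced by \eqref{eq:holo_thm1} is exactly what renders $\N{S_0\delta A}$ $\wn$-independent, so one must verify that neither the norm-equivalence constants of Lemma~\ref{lem:normtrf} nor the coefficient-difference estimates hide any $\wn$-dependence; and in part (ii), the $H^2$ transmission-regularity constant must be shown to depend only on the interface geometry (hence on $C_S$) and not on $\wn$, so that the single factor $\wn^2$ incurred in controlling $\Delta u$ is the sole source of growth and the final bound is again $O(\wn)$.
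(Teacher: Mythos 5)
Your proposal is correct and follows essentially the same route as the paper: both split $\refs{a}(\rC;\cdot,\cdot)$ into the real-coefficient form at $\Re\rC$ plus an $O(\N{\Im\rC}_{C^{1}(\bbS^{d-1})})$ perturbation, invoke the $O(\wn)$ Moiola--Spence bound of Theorem~\ref{thm:MS} for the real problem, and exploit the $O(\wn^{-1})$ smallness of $\Im\rC$ forced by \eqref{eq:holo_thm1} to absorb the perturbation, while for part (ii) both apply a $\wn$-independent elliptic-regularity shift to the transformed equation. The only cosmetic differences are that the paper moves the coefficient difference to the right-hand side and transforms back to the physical domain before applying Theorem~\ref{thm:MS} (with Fredholm theory supplying existence from the a priori bound), whereas you stay on the reference ball and phrase the absorption as a Neumann series for $\Id+S_0\,\delta A$ --- both are valid, and your remark that no extra smallness of $\Im\rC$ is needed to keep $\det\Dervxt\Phibf(\rC;\cdot)$ away from zero is in fact confirmed by the paper's bound \eqref{eq:bdd2}.
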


The $\rC$-dependent coefficients $\wh{\VA}(\rC;\cdot)$ and $\wh{n}(\rC;\cdot)$ 
in the
sesquilinear form on the left-hand side of \eqref{eq:vft}/\eqref{eq:uhatzdef} 
are holomorphic on $\rCS$.  
Let $F(\rC, \refs{u}(\rC))$ be defined as the left-hand side
  of \eqref{eq:uhatzdef} minus the right-hand side, so that $F(\rC, \refs{u}(\rC))=0$.
The analytic implicit function theorem, e.g. \cite[Theorem~15.3]{DEI85},
applied to $F$ and Part (i) of Theorem~\ref{thm:existence} then implies the following
result.

\begin{corollary}[Domain of analyticity of $\rC\mapsto \wh{u}(\rC)$]
  \label{cor:A} {Fix $d$, $n_i<1$, and a constant $C_{\Re}\geq 2$. 
   Then there exist
    $C_{\Im},k_0 \geq\frac12$ 
   such that for all $k\geq k_0$, given $\refs{L}\in (H^1_k(B_2))^*$,}
  the mapping
  \begin{gather*}
    \rC\in C^{1}(\bbS^{d-1},\bbC) \mapsto \wh{u}(\rC)\in H^{1}(B_{2})\quad \text{defined
      through \eqref{eq:vft}/\eqref{eq:uhatzdef}}
  \end{gather*}
  is analytic on the set
  \begin{gather}
    \label{eq:Anset}
    \Ca = \Ca(k;C_{\Re},C_{\Im}) := \left\{\rC\in \rCS:\;
      \N{\Re \rC}_{C^{1}(\bbS^{d-1})}\leq {C_{\Re}} ,\;
      \wn \N{\Im \rC}_{C^{1}(\bbS^{d-1})} \leq C_{\Im}
    \right\}.
  \end{gather}
\end{corollary}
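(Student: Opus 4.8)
The plan is to realise $\rC\mapsto\refs{u}(\rC)$ as the solution branch of a jointly holomorphic operator equation and to invoke the analytic implicit function theorem \cite[Theorem~15.3]{DEI85}, the single non-trivial hypothesis---bounded invertibility of the linearisation---being supplied verbatim by Theorem~\ref{thm:existence}(i). Working over the complex Banach space $(\Hone[B_2])^*$ (the anti-dual on which $\refs{L}$ lives), I would set
\begin{gather*}
  F\colon\rCS\times\Hone[B_2]\to(\Hone[B_2])^*,\qquad
  F(\rC,w):=\refs{a}(\rC;w,\cdot)-\refs{L}=\Op{B}(\rC)w-\refs{L},
\end{gather*}
where $\Op{B}(\rC)\in\mathscr{L}(\Hone[B_2],(\Hone[B_2])^*)$ is the operator induced by the sesquilinear form, $\langle\Op{B}(\rC)w,v\rangle=\refs{a}(\rC;w,v)$. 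By \eqref{eq:uhatzdef} a function solves the transformed problem iff $F(\rC,\refs{u}(\rC))=0$, and since $w\mapsto\refs{a}(\rC;w,\cdot)$ is complex-linear, $F$ is affine in $w$. This linearity is exactly what makes the implicit function theorem degenerate into a statement about operator inversion.

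The first substantive step is to verify that $\rC\mapsto\Op{B}(\rC)$ is holomorphic on a complex neighbourhood of $\Ca$ (equivalently, that $F$ is holomorphic in $\rC$, the $\refs{L}$ term being constant). The $\dtn$ term in \eqref{eq:ahatdef} does not depend on $\rC$, so only the volume integrand matters; by the explicit formulas \eqref{eq:DPd}--\eqref{eq:DPi} the coefficients $\refs{\VA}(\rC;\cdot)$ and $\refs{n}(\rC;\cdot)$ are rational in $\rC(\Bsr)$ and its surface gradient, with the sole denominators being powers of $\det\Dervxt\Phibf(\rC;\cdot)$. These rational expressions depend holomorphically on $\rC$ as $L^\infty(B_2)$-coefficients wherever the determinant does not vanish, and composing with the bounded integration pairing transfers holomorphy to $\rC\mapsto\Op{B}(\rC)$. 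For real $\rC$ the non-vanishing is guaranteed by the lower bound \eqref{eq:bdd}; the role of the weighted constraint $\wn\N{\Im\rC}_{C^1(\bbS^{d-1})}\le C_{\Im}$ in \eqref{eq:Anset} is precisely to force $\Im\rC$ to be uniformly small in $C^1$ once $\wn\ge\wn_0$, so that $\det\Dervxt\Phibf(\rC;\cdot)$ stays within a fixed distance of its real value and \eqref{eq:bdd} survives complexification on a neighbourhood of $\Ca$.

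The second step is the linearisation. Because $F$ is affine in $w$, its partial Fr\'echet derivative $D_wF(\rC,\refs{u}(\rC))=\Op{B}(\rC)$ is independent of the base point. The set $\Ca$ in \eqref{eq:Anset} is exactly the subset of $\rCS$ cut out by the hypotheses \eqref{eq:holo_thm1}, so Theorem~\ref{thm:existence}(i) applies at every $\rC\in\Ca$: it asserts existence, uniqueness, and the bound \eqref{eq:holo_thm2}, which together say precisely that $\Op{B}(\rC)$ is boundedly invertible with $\N{\Op{B}(\rC)^{-1}}\le C_1\wn$, uniformly in $\rC\in\Ca$. Thus $F$ is jointly holomorphic (complex-linear in $w$, holomorphic in $\rC$), vanishes at $(\rC_0,\refs{u}(\rC_0))$, and has boundedly invertible $w$-derivative there, so the analytic implicit function theorem yields a holomorphic solution map near each $\rC_0\in\Ca$.

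Finally I would upgrade this to holomorphy on a full neighbourhood of $\Ca$, where the linearity pays off once more: the solution is given globally by $\refs{u}(\rC)=\Op{B}(\rC)^{-1}\refs{L}$, and since the invertible operators form an open set on which inversion is holomorphic (Neumann series), the uniform bound $\N{\Op{B}(\rC)^{-1}}\le C_1\wn$ on $\Ca$ together with the continuity of $\rC\mapsto\Op{B}(\rC)$ produces an open $\mathscr{V}\supseteq\Ca$ on which $\Op{B}(\rC)$ is invertible; composing $\rC\mapsto\Op{B}(\rC)$ with holomorphic inversion gives a single-valued holomorphic $\refs{u}$ on $\mathscr{V}$, which is the asserted analyticity on $\Ca$. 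I expect the only genuine obstacle to be the holomorphy bookkeeping of the second paragraph---verifying that $\det\Dervxt\Phibf(\rC;\cdot)$ remains bounded away from zero after complexification---since everything else reduces to quoting Theorem~\ref{thm:existence}(i) and standard facts about holomorphic operator-valued maps; it is exactly this non-degeneracy that dictates the $O(\wn^{-1})$ shrinkage of the holomorphy domain in the imaginary direction anticipated in \S\ref{ss:nonres}.
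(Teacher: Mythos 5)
Your proposal is correct and follows essentially the same route as the paper: holomorphy of the coefficients $\refs{\VA}(\rC;\cdot)$, $\refs{n}(\rC;\cdot)$ on $\rCS$, plus the analytic implicit function theorem applied to $F(\rC,w)=\refs{a}(\rC;w,\cdot)-\refs{L}$ with the bounded invertibility of $D_wF=\Op{B}(\rC)$ supplied by Theorem~\ref{thm:existence}(i) (your closing Neumann-series remark is just the linear specialisation of the same idea). The only, harmless, misattribution is in your second paragraph: by \eqref{eq:bdd2} the lower bound $|\det\Dervxt\Phibf(\rC;\cdot)|\ge 3^{-d}$ holds on \emph{all} of $\rCS$ with no smallness of $\Im\rC$, so the constraint $\wn\N{\Im \rC}_{C^{1}(\bbS^{d-1})}\le C_{\Im}$ is needed only for the invertibility of $\Op{B}(\rC)$ (i.e.\ inside Theorem~\ref{thm:existence}), not for the holomorphy of the coefficients.
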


\begin{remark}\textbf{(Analyticity of $\rC\mapsto \wh{u}(\rC) \in H^2$)}\label{rem:H2}
By Part (ii) of Theorem \ref{thm:existence}, an analogous result holds for the mapping $\rC\in C^{2}(\bbS^{d-1},\bbC)\mapsto \wh{u}(\rC)\in H^{2}(B_{2})$ when $\refs{L}(\refs{v}):= \int_{B_2}f\,\overline{\refs{v}}$ for $f\in L^2(B_2)$.
Although we do not use this result in the rest of the paper, 
{additional smoothness of the target space of the 
analytic map $\rC\mapsto \wh{u}(\rC)$,
translates into higher Sobolev regularity of the (analytic continuation of) the 
parametric solution. 
The higher Sobolev regularity of the analytic continuation of the 
parametric solution (as opposed to mere boundedness of this continuation in the energy
norm of the PDE) is key in the convergence analysis of multilevel QMC methods; 
see, e.g., \cite[\S4.3.1]{DiGaLeSc:17} and the further discussion in \S\ref{sec:Concl}.
}
\end{remark}

To prove Theorem \ref{thm:existence},  
we first record some results about the sesquilinear form in \eqref{eq:uhatzdef} 
for (complex valued) $\rC \in \rCS$, as opposed to (real valued) $r\in \mathcal{R}$.
\begin{lemma}\label{lem:strong_elliptic}
Fix $d\in\{2,3\}$ and $C_{\Re}\geq 2$. 
Then there exist $c_1,c_{2}>0$ such that, if 
\begin{equation*}
    \rC \in \rCS \quad\text{with}\quad \N{\Re \rC}_{C^{1}(\bbS^{d-1})}\leq C_{\Re} 
    \quad\text{and}\quad
    \N{\Im \rC}_{C^{1}(\bbS^{d-1})} \leq c_1, 
\end{equation*}
then
\begin{equation}\label{eq:strong_elliptic}
\Re \big( \refs{\VA}(\rC; \refs{\Bx}) \xi, \xi\big)_2 
    \geq c_2 \|\xi\|^2_2 
\quad\text{ for all } \xi\in \Rea^d, \; \refs{\Bx} \in B_2 .
\end{equation}
\end{lemma}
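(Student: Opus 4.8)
The plan is to view $\refs{\VA}(\rC;\cdot)$ as a small holomorphic perturbation, controlled by $\N{\Im\rC}_{C^{1}(\bbS^{d-1})}$, of the \emph{real} tensor obtained at $s:=\Re\rC$, and to transfer to it the uniform coercivity that is available in the real case. Throughout write $\rC = s + \iu t$ with $s=\Re\rC$, $t=\Im\rC$, and abbreviate $M := \Dervxt\Phibf(\rC;\refs{\Bx})$.

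First I would treat the unperturbed (real) case. For $s\in\Cr$ with $\N{s}_{C^{1}(\bbS^{d-1})}\leq C_{\Re}$ the Jacobian $\Dervxt\Phibf(s;\refs{\Bx})$ is real, so by \eqref{eq:At}
\[
  \refs{\VA}(s;\refs{\Bx}) = \det\big(\Dervxt\Phibf(s;\refs{\Bx})\big)\,
  \big(\Dervxt\Phibf(s;\refs{\Bx})^{\top}\Dervxt\Phibf(s;\refs{\Bx})\big)^{-1}
\]
is real, symmetric and positive definite, with smallest eigenvalue equal to $\det\big(\Dervxt\Phibf(s;\refs{\Bx})\big)/\N{\Dervxt\Phibf(s;\refs{\Bx})}_{2}^{2}$. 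Using the determinant lower bound in \eqref{eq:bdd} (which is strictly positive for real $s$, since both diagonal factors lie in $[\tfrac13,\tfrac53]$) and the spectral-norm bound in \eqref{eq:Mest} with $\N{\tfrac{ds}{ds}}_{C^{0}(\bbS^{d-1})}\leq C_{\Re}$, this smallest eigenvalue is at least
\[
  \kappa := \frac{3^{-d}}{\tfrac53 d + 2C_{\Re}^{2}} > 0 ,
\]
a constant depending only on $d$ and $C_{\Re}$. Hence $\big(\refs{\VA}(s;\refs{\Bx})\xi,\xi\big)_2\geq \kappa\N{\xi}_2^2$ for every real $\xi\in\Rea^{d}$, uniformly in $\refs{\Bx}\in B_2$.

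Next I would quantify the effect of $t=\Im\rC$. By Lemma~\ref{lem:Jacobian} the entries of $M$ are affine in $\rC(\Bsr)$ and $\tfrac{d\rC}{d\Bsr}(\Bsr)$, so $\rC\mapsto M$, and (wherever $\det M\neq 0$) also $\rC\mapsto\refs{\VA}(\rC;\refs{\Bx})$, are holomorphic. The determinant stays bounded away from $0$ on the admissible region: because membership in $\rCS$ forces $\N{\Re\rC}_{C^{0}(\bbS^{d-1})}\leq\tfrac13$, the real parts of the two diagonal factors $1+\chi'(\rr)\rC$ and $1+\tfrac{\chi(\rr)}{\rr}\rC$ lie in $[\tfrac13,\tfrac53]$ exactly as in the real case, while their imaginary parts are $O(\N{t}_{C^{0}})$ by \eqref{chi:3}; thus there is a threshold $c_1^{0}>0$ such that $\N{t}_{C^{1}(\bbS^{d-1})}\leq c_1^{0}$ forces each factor to have modulus $\geq\tfrac13$, whence $|\det M|\geq 3^{-d}$ throughout. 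On this region $\refs{\VA}$ is a rational function of the entries of $M$ with denominator bounded below and numerator controlled through $C_{\Re}$, so it is Lipschitz in $\rC$ with a constant $L=L(d,C_{\Re})$, uniformly in $\refs{\Bx}\in B_2$; since $\rC$ and $s$ differ only by $\iu t$, this gives $\N{\refs{\VA}(\rC;\refs{\Bx})-\refs{\VA}(s;\refs{\Bx})}_{2}\leq L\,\N{t}_{C^{1}(\bbS^{d-1})}$.

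Finally I would combine the two steps. As $\big(\refs{\VA}(s;\refs{\Bx})\xi,\xi\big)_2$ is real for real $\xi$, for any $\xi\in\Rea^{d}$ and $\refs{\Bx}\in B_2$,
\[
  \Re\big(\refs{\VA}(\rC;\refs{\Bx})\xi,\xi\big)_2
  \geq \big(\refs{\VA}(s;\refs{\Bx})\xi,\xi\big)_2
  - \N{\refs{\VA}(\rC;\refs{\Bx})-\refs{\VA}(s;\refs{\Bx})}_{2}\N{\xi}_2^2
  \geq \big(\kappa - L\,\N{t}_{C^{1}(\bbS^{d-1})}\big)\N{\xi}_2^2 .
\]
Setting $c_2 := \tfrac12\kappa$ and $c_1 := \min\{c_1^{0},\,\kappa/(2L)\}$ then yields \eqref{eq:strong_elliptic}. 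The main obstacle I anticipate is the uniform Lipschitz/holomorphy bookkeeping of the previous paragraph: one must check that $|\det M|$ stays bounded below over the \emph{entire} admissible region --- which hinges on the pointwise constraint $\N{\Re\rC}_{C^{0}(\bbS^{d-1})}\leq\tfrac13$ built into $\rCS$, and not merely on the $C^{1}$ bound $C_{\Re}$ --- and that the constant $L$ can be taken independent of $\refs{\Bx}\in B_2$, so that $c_1$ and $c_2$ depend only on $d$ and $C_{\Re}$.
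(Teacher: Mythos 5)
Your proof is correct, and it rests on the same underlying idea as the paper's --- coercivity is clear when $\rC$ is real, and a small imaginary part cannot destroy it --- but the execution differs in a way worth noting. The paper factors the quadratic form as $\det\Dervxt\Phibf(\rC;\xrf)\,\bigl(\Dervxt\Phibf(\rC;\xrf)^{-\top}\xi\bigr)^{\top}\bigl(\Dervxt\Phibf(\rC;\xrf)^{-\top}\xi\bigr)$, bounds $\N{\Dervxt\Phibf(\rC;\xrf)^{\top}}_{2}$ above via \eqref{eq:Mest}, and then handles the complex perturbation only through a \emph{qualitative} continuity argument for $\Re\bigl(\det\Dervxt\Phibf(\rC;\xrf)\bigr)$, which is bounded below by $3^{-d}$ at $\Im\rC=0$. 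You instead keep the perturbation at the level of the full coefficient matrix: an explicit coercivity constant $\kappa=3^{-d}/(\tfrac53 d+2C_{\Re}^{2})$ for real displacement functions (via the smallest eigenvalue of $\det M\,(M^{\top}M)^{-1}$), plus an operator-norm Lipschitz bound $\N{\refs{\VA}(\rC;\cdot)-\refs{\VA}(\Re\rC;\cdot)}_{2}\lesssim\N{\Im\rC}_{C^{1}(\bbS^{d-1})}$. This buys two things: the constants $c_1,c_2$ come out explicitly in terms of $d$ and $C_{\Re}$, and you sidestep a small subtlety in the paper's factorization (for complex $M$ the quantity $(M^{-\top}\xi)^{\top}(M^{-\top}\xi)$ is a sum of squares of complex numbers, not a genuine squared norm, so perturbing only the determinant's real part leaves an implicit step). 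The price is that your Lipschitz estimate for $\refs{\VA}$ duplicates work the paper defers to Lemma~\ref{lem:difference} (the bound \eqref{eq:2_differences2} on $\widetilde{\VA}$, proved via the telescoping-product Lemma~\ref{lem:elementary}); if you wanted to make your sketch of that step rigorous you would essentially reproduce that argument. One further remark: your threshold $c_1^{0}$ for keeping $|\det\Dervxt\Phibf(\rC;\xrf)|\geq 3^{-d}$ is unnecessary --- the paper's \eqref{eq:bdd2} shows this holds for \emph{all} $\rC\in\rCS$ regardless of the imaginary part, since $|1+\beta\rC|\geq|1+\beta\Re\rC|\geq\tfrac13$ --- but assuming it does no harm.
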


By, e.g., \cite[Page 122]{Mc:00}, the condition \eqref{eq:strong_elliptic} 
ensures that the differential operator underlying 
the variational formulation \eqref{eq:uhatzdef} is strongly elliptic.
We note that \eqref{eq:strong_elliptic} is analogous to the 
condition \cite[Equation 5.16]{HSS15} in \cite[Assumption 5.11]{HSS15}.

 \bpf[Proof of Lemma \ref{lem:strong_elliptic}] 
  By the definition of
  $ \refs{\VA}(\rC; \refs{\Bx})$ \eqref{eq:At},
  \begin{equation*}
    \big( \refs{\VA}(\rC; \refs{\Bx}) \xi, \xi\big)_2 = 
    (\det\Dervxt\Phibf(\rC;\xrf))
    \|\Dervxt\Phibf( \rC;\refs{\Bx})^{-\top}\xi\|_2^2;
  \end{equation*}
  thus it is sufficient to bound 
  $\|\Dervxt\Phibf( \rC;\refs{\Bx})^\top
  \|_{2}$ from above and 
  $\Re\big( \det\Dervxt\Phibf(\rC;\xrf)\big)$ from below.
  The bound on $\|\Dervxt\Phibf( \rC;\refs{\Bx})^\top
  \|_{2}$ from above follows by replacing $r$ by $\rC$ in the expression \eqref{eq:DPhi} for $\Dervxt\Phibf( \rC;\refs{\Bx})$ and then 
  recognising that the bound   
  \eqref{eq:Mest} on $\|\Dervxt\Phibf( \rC;\refs{\Bx})
  \|_{2}$ holds also for $\|\Dervxt\Phibf( \rC;\refs{\Bx})^\top
  \|_{2}$.
    Finally,  by the expression \eqref{eq:DPd} (with $r$ replaced by $\rC$) and the bound \eqref{eq:bdd}, 
  $\Re( \det\Dervxt\Phibf(\rC;\xrf))$ is continuous with respect to $\Im \rC$ and bounded below by $3^{-d}$ when $\Im \rC=0$. Therefore there exist $c_1', c_2'>0$ such that 
  \beqs
\Re( \det\Dervxt\Phibf(\rC;\xrf))\geq c_2'\quad \text{ when }\N{\Im \rC}_{C^{1}(\bbS^{d-1})} \leq c_1';
\eeqs
the result then follows.
\epf

  We also need upper bounds for the transformed coefficient functions, which are
  supplied by the next lemma.
  
  \begin{lemma}
    \begin{enumerate}
    \item[(i)] There exists ${C>0}$ depending only on ${d \in \{2,3\}}$ such that
    \begin{gather}
      \label{eq:continuity_cond1}
      \sup_{\refs{\Bx}\in B_{2}}\big\| \refs{\VA}(\rC;\refs{\Bx})\big\|_2 \leq C \left(1+
        \| \rC\|_{C^1(\mathbb{S}^{d-1})}\right)^{2}\quad
      \text{and}\quad
      \sup_{\refs{\Bx}\in B_{2}}\big\| \refs{n}(\rC;\refs{\Bx})\big\|_2 \leq \left(1+2\left\|\rC \right\|_{C^{0}(\mathbb{S}^{d-1})}\right)^{d}. 
    \end{gather}
    \item[(ii)] Fix $d\in\{2,3\}$ and $c_3'>0$ 
      and assume that
      \begin{equation}\label{eq:continuity_cond2}
        \rC \in \rCS \quad\text{ with } \quad \| \rC\|_{C^2(\mathbb{S}^{d-1})}\leq c_3'\;.
      \end{equation}
      Then there exists $c_4' > 0$ such that
      \begin{equation*}
        \sup_{\refs{\Bx}}\big\| \refs{\nablabf}
        \refs{\VA}(\rC;\refs{\Bx})\big\|_2\leq c_4' \;\mbox{ for all }\; \xrf\in B_2\;.
      \end{equation*}
    \end{enumerate}
    \label{lem:continuity}
  \end{lemma}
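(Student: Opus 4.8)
The plan is to read the bounds off the explicit Jacobian from Lemma~\ref{lem:Jacobian}. Writing $\xrf=\rr\Bsr$ and abbreviating the two diagonal quantities of $\Dervxt\Phibf(\rC;\xrf)$ in \eqref{eq:DPhi} as $a:=1+\chi'(\rr)\rC(\Bsr)$ and $c:=1+\tfrac{\chi(\rr)}{\rr}\rC(\Bsr)$, together with the off-diagonal block $\tfrac{\chi(\rr)}{\rr}\tfrac{d\rC}{d\Bsr}(\Bsr)$, the whole argument rests on one observation about the \emph{real parts} of $a$ and $c$. Since $\chi'$ and $\chi/\rr$ are real with $|\chi'|\leq 2-\lambda$ by \eqref{chi:3} and $\chi(\rr)/\rr\leq 2-\lambda$ (from $\chi(\lambda)=0$ and \eqref{chi:3} via the mean value theorem), and since $\rC\in\rCS$ forces $\N{\Re\rC}_{C^0(\bbS^{d-1})}\leq\tfrac13$, one gets $\Re a = 1+\chi'\Re\rC\geq\tfrac{1+\lambda}{3}>\tfrac13$ and likewise $\Re c\geq\tfrac13$. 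Hence $|a|\geq\Re a\geq\tfrac13$ and $|c|\geq\tfrac13$ \emph{for every} $\rC\in\rCS$, with no constraint whatsoever on $\Im\rC$. This is what makes the upper bounds hold on all of $\rCS$.

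For Part~(i) I would bound $\refs{\VA}$ by submultiplicativity. Using \eqref{eq:DPi} in the form $\Dervxt\Phibf(\rC;\xrf)^{-1}=(\det\Dervxt\Phibf(\rC;\xrf))^{-1}\operatorname{adj}\Dervxt\Phibf(\rC;\xrf)$ together with \eqref{eq:At} gives
\begin{equation*}
  \N{\refs{\VA}(\rC;\xrf)}_2\leq\frac{\N{\operatorname{adj}\Dervxt\Phibf(\rC;\xrf)}_F^2}{\SN{\det\Dervxt\Phibf(\rC;\xrf)}}.
\end{equation*}
The numerator is bounded above by the same Frobenius-norm computation behind \eqref{eq:Mest}, now applied to the complex entries: $|a|,|c|\leq 1+2\N{\rC}_{C^0}$ and $\N{\tfrac{d\rC}{d\Bsr}}\leq\N{\rC}_{C^1}$ yield $\N{\operatorname{adj}\Dervxt\Phibf}_F^2\leq C_d(1+\N{\rC}_{C^1})^2$, while the denominator is bounded below by $|a|\,|c|^{d-1}\geq 3^{-d}$ from the observation above; this gives the first estimate in \eqref{eq:continuity_cond1}. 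For $\refs{n}$ I combine \eqref{eq:nt} and \eqref{eq:DPd}: $\SN{\refs{n}}=\SN{\det\Dervxt\Phibf}\,\SN{n_0}\leq(1+2\N{\rC}_{C^0})^{d}$, using $|a|,|c|\leq 1+2\N{\rC}_{C^0}$ and $\SN{n_0}\leq 1$ (as $n_i<1$).

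For Part~(ii) the strategy is to differentiate in $\refs{\Bx}$ the rational expressions for the entries of $\refs{\VA}$ obtained in Part~(i). Two remarks keep this routine. First, by \eqref{chi:2} the Jacobian equals the identity whenever $\rr\leq\lambda$ or $\rr\geq 2-\lambda$, so $\refs{\VA}(\rC;\cdot)\equiv\VI$ there and $\refs{\nablabf}\refs{\VA}$ vanishes; it therefore suffices to estimate on the shell $\lambda\leq\rr\leq 2-\lambda$, where $\rr$ is bounded away from $0$ and the polar/spherical coordinate singularity is harmless. Second, splitting $\refs{\nablabf}=\Bsr\,\partial_\rr+\tfrac1\rr\nabla_{\Bsr}$, the radial derivatives introduce $\chi''$ and $\partial_\rr(\chi/\rr)$, bounded by fixed constants on the shell, whereas the angular derivatives introduce $\tfrac{d\rC}{d\Bsr}$ and $\tfrac{d^2\rC}{d\Bsr^2}$, bounded by $\N{\rC}_{C^2}\leq c_3'$. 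Applying the product and quotient rules and again invoking $|a|,|c|\geq\tfrac13$ to control the (now higher) powers of $a,c$ in the denominators produces $\sup_{\xrf}\N{\refs{\nablabf}\refs{\VA}(\rC;\xrf)}_2\leq c_4'$.

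The conceptual crux is the lower bound $|a|,|c|\geq\tfrac13$ for complex $\rC$, which lets every upper bound hold uniformly over $\rCS$ while constraining only $\Re\rC$; the remainder is bookkeeping. The main (though minor) obstacle is the passage from the polar/spherical representation to the Cartesian gradient $\refs{\nablabf}\refs{\VA}$ in Part~(ii); it is resolved precisely by noting that $\refs{\VA}\equiv\VI$ near the origin and that all quantities are smooth on $\lambda\leq\rr\leq 2-\lambda$.
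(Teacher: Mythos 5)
Your proposal follows essentially the same route as the paper: write $\refs{\VA}$ as $(\det\Dervxt\Phibf)^{-1}$ times the product of the cofactor-type matrix from \eqref{eq:DPi} with its transpose, bound that matrix entrywise via the Frobenius norm using $|a|,|c|\leq 1+2\N{\rC}_{C^0(\bbS^{d-1})}$, and bound the determinant below via the key observation that $\left|1+\beta\rC\right|\geq\left|1+\beta\Re\rC\right|\geq\tfrac13$ for $0\leq\beta\leq 2$, so that only $\Re\rC$ is constrained and the bounds hold on all of $\rCS$. The paper omits the details of Part (ii); your sketch (the gradient of $\refs{\VA}$ vanishes off the shell $\lambda\leq\refs{\rho}\leq 2-\lambda$, and on that shell the quotient rule together with the same determinant lower bound and one extra derivative of $\rC$ gives the uniform bound) is consistent with what the paper indicates.
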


Lemma \ref{lem:strong_elliptic} and Part (i) of Lemma \ref{lem:continuity} have the following immediate corollary.

\begin{corollary}[Continuity and G\aa rding inequality for the sesquilinear form]
  \label{cor:sesqui}
  Let $\refs{a}(\rC;\cdot,\cdot)$ denote the sesquilinear form on the left-hand
  side of \eqref{eq:uhatzdef}. 
  We fix $d\in\{2,3\}$ and $C_{\Re}>2$. 
  Then there exists $c_{1}>0$ such that if
\begin{equation*}
    \rC \in \rCS \quad\text{with}\quad \N{\Re \rC}_{C^{1}(\bbS^{d-1})}\leq C_{\Re} \quad\text{and}\quad
      \N{\Im \rC}_{C^{1}(\bbS^{d-1})} \leq c_1, 
\end{equation*}
then there exist ($k$-independent) constants $c_{2},c_{5},c_{6}>0$ with 
\begin{equation*}
|\refs{a}(\rC;\refs{u},\refs{v})| 
\leq 
c_5 \big\| \refs{u}\big\|_{H^1_k(B_2)} \big\| \refs{v}\big\|_{H^1_k(B_2)}
\quad\forall \refs{u}, \refs{v}\in H^1(B_2)
\end{equation*}
and
\begin{equation*}
\Re \big(\refs{a}(\rC;\refs{v},\refs{v})\big) 
\geq 
c_2
\big\|\refs{v}\big\|^2_{H^1_k(B_2)} - c_6 \big\|\refs{v}\big\|^2_{L^2(B_2)}
\quad\forall \refs{v}\in H^1(B_2),
\end{equation*}
and for all $k\geq \frac12$.
\end{corollary}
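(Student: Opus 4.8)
The plan is to split the form as $\refs{a}(\rC;\refs{u},\refs{v})=\refs{a}_{\mathrm{vol}}(\rC;\refs{u},\refs{v})+\refs{a}_\Gamma(\refs{u},\refs{v})$, where $\refs{a}_{\mathrm{vol}}$ gathers the two volume integrals over $B_2$ in \eqref{eq:ahatdef} and $\refs{a}_\Gamma(\refs{u},\refs{v}):=-\int_{\partial B_2}\wn^{-1}\dtn(\rst{\refs{u}}{\partial B_2})\overline{\refs{v}}\,\mathrm{d}S$ is the boundary term. Under the hypotheses one has $\N{\rC}_{C^1(\bbS^{d-1})}\le C_{\Re}+c_1$, so Part~(i) of Lemma~\ref{lem:continuity} supplies $\wn$-independent bounds $\N{\refs{\VA}(\rC;\cdot)}_{L^\infty(B_2)}$, $\N{\refs{n}(\rC;\cdot)}_{L^\infty(B_2)}\le C(C_{\Re})$; and, taking $c_1$ no larger than the threshold in Lemma~\ref{lem:strong_elliptic}, the strong-ellipticity bound \eqref{eq:strong_elliptic} holds with a $\wn$-independent constant $c_2'$. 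All three target constants are assembled from these $\wn$-independent quantities, the only genuinely $\wn$-sensitive ingredient being $\refs{a}_\Gamma$, which I handle through the $\wn$-explicit mapping properties of the Helmholtz exterior Dirichlet-to-Neumann operator (see \cite{CM08,GPS19} and \cite{MS19}).

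For the continuity estimate I bound the volume part by Cauchy--Schwarz, $|\refs{a}_{\mathrm{vol}}(\rC;\refs{u},\refs{v})|\le \N{\refs{\VA}}_{L^\infty(B_2)}\wn^{-2}\N{\nabla\refs{u}}_{L^2(B_2)}\N{\nabla\refs{v}}_{L^2(B_2)}+\N{\refs{n}}_{L^\infty(B_2)}\N{\refs{u}}_{L^2(B_2)}\N{\refs{v}}_{L^2(B_2)}$, and then use $\wn^{-1}\N{\nabla\refs{w}}_{L^2(B_2)}\le\N{\refs{w}}_{H^1_\wn(B_2)}$ and $\N{\refs{w}}_{L^2(B_2)}\le\N{\refs{w}}_{H^1_\wn(B_2)}$ together with the $L^\infty$ bounds above to obtain $|\refs{a}_{\mathrm{vol}}(\rC;\refs{u},\refs{v})|\le C\N{\refs{u}}_{H^1_\wn(B_2)}\N{\refs{v}}_{H^1_\wn(B_2)}$ with $C$ independent of $\wn$. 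For $\refs{a}_\Gamma$ I combine the $\wn$-uniform continuity of $\wn^{-1}\dtn$ in the $\wn$-weighted fractional norm with the $\wn$-uniform trace inequality $\N{\refs{w}}_{H^{1/2}_\wn(\partial B_2)}\le C\N{\refs{w}}_{H^1_\wn(B_2)}$; tracking the powers of $\wn$ gives $|\refs{a}_\Gamma(\refs{u},\refs{v})|\le C\wn^{-1}\N{\refs{u}}_{H^1_\wn(B_2)}\N{\refs{v}}_{H^1_\wn(B_2)}$, which for $\wn\ge\tfrac12$ is $\le C'\N{\refs{u}}_{H^1_\wn(B_2)}\N{\refs{v}}_{H^1_\wn(B_2)}$. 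Adding the two bounds fixes $c_5$.

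For the G\aa rding inequality I put $\refs{v}=\refs{u}$ and take real parts. The principal part is controlled from below by Lemma~\ref{lem:strong_elliptic}, $\Re\int_{B_2}\wn^{-2}\refs{\VA}\,\refs{\nablabf}\refs{u}\cdot\overline{\refs{\nablabf}\refs{u}}\ge c_2'\wn^{-2}\N{\nabla\refs{u}}^2_{L^2(B_2)}$, while the zeroth-order term is lower order, $-\Re\int_{B_2}\refs{n}\,|\refs{u}|^2\ge-\N{\refs{n}}_{L^\infty(B_2)}\N{\refs{u}}^2_{L^2(B_2)}$. For the boundary term I invoke the sign property $\Re\int_{\partial B_2}\wn^{-1}\dtn(\refs{u})\overline{\refs{u}}\,\mathrm{d}S\le C\wn^{-1}\N{\refs{u}}^2_{L^2(\partial B_2)}$ of the Helmholtz DtN operator, so that $\Re\refs{a}_\Gamma(\refs{u},\refs{u})\ge-C\wn^{-1}\N{\refs{u}}^2_{L^2(\partial B_2)}$; I then apply the classical multiplicative trace inequality $\N{\refs{u}}^2_{L^2(\partial B_2)}\le C_{\mathrm{tr}}(\N{\refs{u}}_{L^2(B_2)}\N{\nabla\refs{u}}_{L^2(B_2)}+\N{\refs{u}}^2_{L^2(B_2)})$ followed by Young's inequality. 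The decisive point is that the prefactor $\wn^{-1}$ exactly balances the $\wn$ hidden in $\N{\refs{u}}_{L^2(B_2)}\N{\nabla\refs{u}}_{L^2(B_2)}=(\wn\N{\refs{u}}_{L^2(B_2)})(\wn^{-1}\N{\nabla\refs{u}}_{L^2(B_2)})$, so that for $\wn\ge\tfrac12$ one gets $\Re\refs{a}_\Gamma(\refs{u},\refs{u})\ge-\tfrac{c_2'}{2}\wn^{-2}\N{\nabla\refs{u}}^2_{L^2(B_2)}-C''\N{\refs{u}}^2_{L^2(B_2)}$ with $C''$ independent of $\wn$. Summing the three estimates, absorbing the $-\tfrac{c_2'}{2}\wn^{-2}\N{\nabla\refs{u}}^2_{L^2(B_2)}$ into the ellipticity term, and rewriting $\tfrac{c_2'}{2}\wn^{-2}\N{\nabla\refs{u}}^2_{L^2(B_2)}=\tfrac{c_2'}{2}\N{\refs{u}}^2_{H^1_\wn(B_2)}-\tfrac{c_2'}{2}\N{\refs{u}}^2_{L^2(B_2)}$ yields the claim with $c_2:=c_2'/2$ and a $\wn$-independent $c_6$.

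The main obstacle is the boundary term $\refs{a}_\Gamma$: it is the sole place where $\wn$-dependence could spoil the $\wn$-independence of $c_2,c_5,c_6$. The delicate feature is that both the continuity and the coercivity estimates for $\wn^{-1}\dtn$ carry exactly one power $\wn^{-1}$ beyond the naive count, and this is precisely what the $\wn$-weighted trace and multiplicative-trace inequalities need in order to return $\wn$-uniform constants for $\wn\ge\tfrac12$. This hinges on the sharp $\wn$-explicit properties of the Helmholtz exterior DtN map established in \cite{CM08,GPS19}, which underlie the bounds of \cite{MS19} used in Theorem~\ref{thm:MS}.
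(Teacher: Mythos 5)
Your proof is correct and follows essentially the same route as the paper, which presents the corollary as an immediate consequence of Lemma \ref{lem:strong_elliptic} and Part~(i) of Lemma \ref{lem:continuity} and leaves the (standard, $k$-explicit) treatment of the DtN boundary term implicit; your argument simply fills in those details. The one simplification available is that your multiplicative-trace/Young step in the G\aa rding bound is unnecessary: the exterior Helmholtz DtN map satisfies the sign property $\Re \int_{\partial B_2} \dtn(\refs{v})\,\overline{\refs{v}}\,\mathrm{d}S \leq 0$, so the boundary term contributes nonnegatively to $\Re\big(\refs{a}(\rC;\refs{v},\refs{v})\big)$ and can simply be dropped.
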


\bpf[Proof of Lemma \ref{lem:continuity}]
By the explicit formula \eqref{eq:DPi}, 
\begin{gather}\label{eq:DPhi_inverse}
  \Dervxt\Phibf(\rC;\xrf)^{-1} = \frac{ \Dervxt\Phibf(\rC;\xrf)^{\sharp}}{\det\Dervxt\Phibf(r;\xrf)}
  \;,\quad \Dervxt\Phibf(\rC;\xrf)^{\sharp} :=
  \begin{bmatrix}
    1+\dfrac{\chi(\rr)}{\rr}\rC(\Bsr) & - \frac{\chi(\rr)}{\rr}
    \dfrac{d \rC}{d
      \Bs}(\Bsr) \\
    \mathbf{0} & \left( 1+\chi'(\rr)\rC(\Bsr) \right)\,\VI_{d-1}
  \end{bmatrix},
\end{gather}
so that $\refs{\VA}(\rC;\refs{\Bx})$ defined by \eqref{eq:At}
can be written as 
\begin{equation}\label{eq:At_new}
\refs{\VA}(\rC;\refs{\Bx}) 
=   
\frac{1}{\det\Dervxt\Phibf(r;\xrf)}\,  \Dervxt\Phibf(\rC;\xrf)^{\sharp}\big(\Dervxt\Phibf(\rC;\xrf)^{\sharp}\big)^\top,
\;\; \xrf\in B_2
\;.
\end{equation}
From the expression \eqref{eq:DPhi_inverse} for $\Dervxt\Phibf( \rC;\refs{\Bx})^{\sharp}$, 
the bounds on $\chi$ in \eqref{chi} imply that there exists $C>0$ (depending only on $d$) such that
\begin{gather}
  \label{eq:DPhib_new}
  \big\|\Dervxt\Phibf( \rC;\refs{\Bx})^{\sharp}
  \big\|_{2} \leq C\big(1+\N{ \rC}_{C^{1}(\bbS^{d-1})}\big)
  \quad\text{ for all } \rC\in \rCS,\;  \xrf\in B_2\;;
\end{gather}
a similar bound holds for 
$\|(\Dervxt\Phibf( \rC;\refs{\Bx})^{\sharp})^\top\|_{2}$. 

We next observe that 
the lower bound \eqref{eq:bdd} on the modulus of the determinant
holds even for complex-valued $\rC\in\rCS$, 
i.e., 
\begin{equation}\label{eq:bdd2}
 |\det\Dervxt\Phibf(\rC;\xrf)| \geq 3^{-d} \;\text{ for all } \rC\in\rCS \;,
\;\;\xrf\in B_2\;.
\end{equation} 
This is because 
$\left|1+\beta \rC(\Bsr)\right| \geq \left|1+\beta \Re \rC(\Bsr)\right|\geq \frac13$ 
if $0\leq \beta \leq 2$ and $-\frac13\leq \Re \rC(\Bsr) \leq \frac13$, 
regardless of the imaginary part of $\rC(\Bsr)$. 
This completes the proof of the bound on $\sup_{\refs{\Bx}}\big\|
\refs{\VA}(\rC;\refs{\Bx})\big\|_2$. 

To prove the bound for
$\refs{n}(\rC;\refs{\Bx})$ we simply appeal to its definition \eqref{eq:nt} and the
expression \eqref{eq:DPd} for $\det\Dervxt\Phibf(\rC;\xrf)$ .
The proof of the bound on
$\sup_{\refs{\Bx}}\big\| \refs{\nablabf}\refs{\VA}(\rC;\refs{\Bx})\big\|_2 $ in Part (ii)
is similar, but more involved because of the differentiation (and thus requiring one more
derivative of $\rC$ -- contrast \eqref{eq:continuity_cond1} and
\eqref{eq:continuity_cond2}). We omit the details.
\epf

We are now in a position to prove Theorem \ref{thm:existence}. 
A central issue here is that for complex-valued $\rC\in\rCS$ 
Theorem \ref{thm:MS} is not immediately applicable.
The \emph{crucial idea} of the proof is 
to rewrite the variational problem \eqref{eq:uhatzdef} 
as
\begin{align*}
  &\int_{B_2}\Big(\wn^{-2}\refs{\VA}(\Re \rC;\refs{\Bx})
  \refs{\nablabf} \refs{u}(\rC;\refs{\Bx}) \cdot \overline{\refs{\nablabf}
    \refs{v}}(\refs{\Bx}) -\refs{n}(\Re \rC;\refs{\Bx})\refs{u}(\rC;\refs{\Bx})
  \overline{\refs{v}}(\refs{\Bx})\Big) \rd \refs{\Bx}-
  \int\nolimits_{\partial B_{2}} \wn^{-1}\dtn(\rst{\refs{u}(\rC;\cdot)}{\partial
    B_{2}})\overline{\refs{v}}\,\mathrm{d}S\\
  &= \widehat{L}(\refs{v})+
  \int_{B_2}\bigg[\wn^{-2}\Big(\refs{\VA}(\Re
  \rC;\refs{\Bx})-\refs{\VA}(\rC;\refs{\Bx})\Big)\refs{\nablabf}
  \refs{u}(\rC;\refs{\Bx}) \cdot\overline{ \refs{\nablabf}
    \refs{v}(\refs{\Bx}) }
  -\Big(\refs{n}(\Re \rC;\refs{\Bx})-\refs{n}(\rC;\refs{\Bx})\Big)
  \refs{u}(\rC;\refs{\Bx})\overline{\refs{v}}(\refs{\Bx})\bigg] \rd \refs{\Bx}\;.
\end{align*}
We then transform back to the physical domain via
$\refs{\Bx}\mapsto \Bx:= \Phibf(\Re \rC;\refs{\Bx})$ 
and obtain that 
$u(\rC;\cdot):=\refs{u}(\rC,\Phibf^{-1}(\Re \rC,\Bx))$ 
solves the variational problem: 

find $u(\rC;\cdot) \in H^1(B_2)$ such that, for all $v \in H^1(B_2)$, 
\begin{multline}
  \label{eq:uhatmapped}
  \int_{B_2} \Big( \wn^{-2}\nablabf u(\rC;\Bx)\cdot \nablabf v(\Bx) - n(\Re \rC; \Bx) u(\rC;\Bx)
  \,\overline{v}(\Bx)\Big)\rd \Bx - \int_{\partial B_2}
  \wn^{-1}\dtn(u(\rC;\cdot)|_{\partial B_2})\overline{v} \, \rd S \\ =L(v) + \widetilde{L}(\rC;v),
\end{multline}
where
\begin{align}
  \label{eq:Ltilde}
  \widetilde{L}(\rC;v) & :=\int_{B_2} \bigg[ \wn^{-2}\widetilde{\VA}(\rC;\Bx) \nablabf
  u(\rC;\Bx)
  \cdot \nablabf v(\Bx) - \widetilde{n}(\rC;\Bx) u(\rC;\Bx) \,\overline{v}(\Bx)\bigg] \rd \Bx\;,\\
  \label{eq:widetildeA}
  \widetilde{\VA}(\rC;\Bx) & :=\left. \frac{\Big( \refs{\VA}(\Re \rC; \refs{\Bx})
      -  \refs{\VA}(\rC; \refs{\Bx})\Big)}
    {  \det \Dervxt\Phibf(\Re \rC;\refs{\Bx}) }\Dervxt\Phibf(\Re \rC;\refs{\Bx})
    \,\Dervxt\Phibf(\Re \rC;\refs{\Bx})^{\top} \right|_{\refs{\Bx}\mapsto \Bx := \Phibf(\Re \rC;\refs{\Bx})}\;,\\
\label{eq:widetilden}
  \widetilde{n}(\rC;\Bx) & :=\left. \frac{\Big(
      \det \Dervxt\Phibf(\Re \rC;\refs{\Bx})
      - \det \Dervxt\Phibf(\rC;\refs{\Bx})\Big)
      n_{0}(\refs{\Bx})
    }{ 
      \det \Dervxt\Phibf(\Re \rC;\refs{\Bx}) } \right|_{\refs{\Bx}\mapsto \Bx := \Phibf(\Re \rC;\refs{\Bx})
  }\;.
\end{align}
The key point is that \eqref{eq:uhatmapped} is a variational formulation
corresponding to a Helmholtz problem with real coefficients, 
and thus we can apply Theorem~\ref{thm:MS}.

\begin{lemma}
  \label{lem:difference}
  Given $d>0$, there exist $C_{\rm diff,1}, C_{\rm diff,2}>0$ such that, for all $\rC\in\rCS$,
  \begin{subequations}
    \label{eq:2_differences}
    \begin{align}
      \label{eq:2_differences1}
      \N{\widetilde{n}(\rC;\cdot)}_{L^{\infty}(B_{2})}
      & \leq C_{\rm diff,1}\N{\Im \rC}_{C^{0}(\bbS^{d-1})}\big(1+\N{\Im \rC}_{C^{0}(\bbS^{d-1})}\big)^{d-1},\\
      \label{eq:2_differences2}
      \big\|\widetilde{\VA}(\rC,\cdot)\big\|_{L^{\infty}(B_{2})} & \leq C_{\rm diff,2}
      \N{\Im \rC}_{C^{1}(\bbS^{d-1})}\big(1+\N{\rC}_{C^{1}(\bbS^{d-1})}\big)^{{2d-2}}. 
    \end{align}
  \end{subequations}
\end{lemma}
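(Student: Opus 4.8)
The plan is to read off both bounds directly from the explicit formulas \eqref{eq:DPd}, \eqref{eq:DPhi_inverse} for $\det\Dervxt\Phibf$ and $\Dervxt\Phibf^{\sharp}$, using two structural facts. First, every denominator occurring in \eqref{eq:widetilden}, \eqref{eq:widetildeA} is a product of the scalar factors $1+\chi'(\rr)\rC(\Bsr)$ and $1+\tfrac{\chi(\rr)}{\rr}\rC(\Bsr)$, each bounded below in modulus by $\tfrac13$ on all of $\rCS$; this is exactly the inequality $|1+\beta\rC|\geq|1+\beta\Re\rC|\geq\tfrac13$ (valid for $0\leq\beta\leq2$) used to prove \eqref{eq:bdd2}, so $|\det\Dervxt\Phibf|\geq 3^{-d}$ and all reciprocals are harmless. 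Second, and crucially, each difference of the form $f(\Re\rC)-f(\rC)$ in \eqref{eq:widetilden}, \eqref{eq:widetildeA} \emph{vanishes identically when $\Im\rC\equiv0$}, since then $\rC=\Re\rC$; this is the mechanism that lets me factor out one power of the imaginary part.

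For \eqref{eq:2_differences1} I would work from \eqref{eq:widetilden}. As $\Phibf(\Re\rC;\cdot)$ is a bijection of $B_{2}$ (Lemma \ref{lem:propphi}), evaluation at $\Bx:=\Phibf(\Re\rC;\refs{\Bx})$ preserves the $L^{\infty}$-norm, so it suffices to bound the supremum over $\refs{\Bx}\in B_{2}$. By \eqref{eq:DPd}, $\det\Dervxt\Phibf(r;\xrf)=P(r(\Bsr))$ with $P$ a univariate polynomial of degree $d$ whose ($\rr$-dependent) coefficients are uniformly bounded. Writing $P(\Re\rC)-P(\rC)=(\Re\rC-\rC)\,Q(\Re\rC,\rC)$ with $Q$ the divided difference, a polynomial of total degree $d-1$, and inserting $|\Re\rC|\leq\tfrac13$ together with $|\rC|\leq\tfrac13+\N{\Im\rC}_{C^{0}(\bbS^{d-1})}$, gives $|\det\Dervxt\Phibf(\Re\rC)-\det\Dervxt\Phibf(\rC)|\leq C\,\N{\Im\rC}_{C^{0}(\bbS^{d-1})}(1+\N{\Im\rC}_{C^{0}(\bbS^{d-1})})^{d-1}$. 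Dividing by $|\det\Dervxt\Phibf(\Re\rC)|\geq 3^{-d}$ and by $|n_{0}|\leq1$ yields \eqref{eq:2_differences1}.

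The bound \eqref{eq:2_differences2} is the hard part. Since the operator-norm estimate is insensitive to the placement of the factors in \eqref{eq:widetildeA}, I may work with the symmetric form $\tfrac{1}{\det\Dervxt\Phibf(\Re\rC)}\Dervxt\Phibf(\Re\rC)\big(\refs{\VA}(\Re\rC)-\refs{\VA}(\rC)\big)\Dervxt\Phibf(\Re\rC)^{\top}$ arising from the same change of variables. The naive split $\N{\widetilde\VA}_{2}\leq 3^{d}\,\N{\refs{\VA}(\Re\rC)-\refs{\VA}(\rC)}_{2}\,\N{\Dervxt\Phibf(\Re\rC)}_{2}^{2}$ (last factor controlled by \eqref{eq:Mest}) overcharges the exponent, so to reach the stated power I would exploit a cancellation: with $G:=\Dervxt\Phibf(\Re\rC)\Dervxt\Phibf(\rC)^{-1}$ and $\refs{\VA}(\rC)=\det\Dervxt\Phibf(\rC)\,\Dervxt\Phibf(\rC)^{-1}\Dervxt\Phibf(\rC)^{-\top}$ from \eqref{eq:At}, the symmetric expression collapses to $\VI-\tfrac{\det\Dervxt\Phibf(\rC)}{\det\Dervxt\Phibf(\Re\rC)}GG^{\top}$, whose order-one part ($\VI$) cancels exactly at $\Im\rC\equiv0$. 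Now $G-\VI=(\Dervxt\Phibf(\Re\rC)-\Dervxt\Phibf(\rC))\Dervxt\Phibf(\rC)^{-1}$ is linear in $(\Im\rC,\tfrac{d\Im\rC}{d\Bs})$, hence $O(\N{\Im\rC}_{C^{1}(\bbS^{d-1})})$, while $\N{\Dervxt\Phibf(\rC)^{-1}}_{2}\leq C(1+\N{\rC}_{C^{1}(\bbS^{d-1})})$ by \eqref{eq:DPhib_new} and \eqref{eq:bdd2}, and the determinant ratio differs from $1$ by the quantity already estimated for \eqref{eq:2_differences1}. Collecting these contributions produces the factor $(1+\N{\rC}_{C^{1}(\bbS^{d-1})})^{2d-2}$.

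The step I expect to be the main obstacle is precisely this degree bookkeeping. One must track that the $\Dervxt\Phibf(\rC)^{-1}$ factor — which carries the surface gradient $\tfrac{d\rC}{d\Bs}$ and hence all of the $\N{\rC}_{C^{1}(\bbS^{d-1})}$-growth — enters the right number of times, and that no term is charged more than a single power of $\N{\Im\rC}_{C^{1}(\bbS^{d-1})}$; this is where the crude submultiplicative bound must be replaced by the cancellation above. A secondary point is to confirm that the gradient terms are captured by the $C^{1}$-norm alone, in contrast to Part (ii) of Lemma \ref{lem:continuity}, where a further derivative of $\rC$ (the $C^{2}$-norm) is required because $\refs{\nablabf}\refs{\VA}$ is differentiated.
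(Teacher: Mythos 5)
Your treatment of \eqref{eq:2_differences1} coincides with the paper's: the divided-difference factorization of the degree-$d$ polynomial $\det\Dervxt\Phibf$ is exactly the telescoping-product estimate the paper packages as Lemma \ref{lem:elementary} (applied with $n=d$, $m=1$), and the lower bound \eqref{eq:bdd2} disposes of the denominator in both arguments. For \eqref{eq:2_differences2} you take a genuinely different route. The paper writes $\refs{\VA}(\rC;\cdot)=(\det\Dervxt\Phibf)^{-1}\Dervxt\Phibf^{\sharp}(\Dervxt\Phibf^{\sharp})^{\top}$ as in \eqref{eq:At_new}, telescopes this three-factor product once more via Lemma \ref{lem:elementary} (now with $n=3$, $m=d$), and then multiplies by the outer factors $\Dervxt\Phibf(\Re\rC)\Dervxt\Phibf(\Re\rC)^{\top}/\det\Dervxt\Phibf(\Re\rC)$; you instead conjugate first and exploit the exact identity $\widetilde{\VA}=\VI-\tfrac{\det\Dervxt\Phibf(\rC)}{\det\Dervxt\Phibf(\Re\rC)}GG^{\top}$ with $G=\Dervxt\Phibf(\Re\rC)\Dervxt\Phibf(\rC)^{-1}$, which makes the vanishing at $\Im\rC\equiv 0$ structural and reduces everything to the determinant-ratio estimate (already done for the first bound) plus a bound on $G-\VI$. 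Both routes are sound and both deliver what is actually needed downstream, namely an estimate of the form $C\N{\Im\rC}_{C^{1}(\bbS^{d-1})}\bigl(1+\N{\rC}_{C^{1}(\bbS^{d-1})}\bigr)^{N}$; your identity is the slicker of the two, while the paper's is more mechanical and reuses the same lemma for both parts. Two caveats. First, your ``symmetric form'' $\Dervxt\Phibf(\Re\rC)\bigl(\refs{\VA}(\Re\rC)-\refs{\VA}(\rC)\bigr)\Dervxt\Phibf(\Re\rC)^{\top}/\det$ is not literally the matrix written in \eqref{eq:widetildeA}, since matrix products do not commute; you should either observe that the symmetric form is the one actually produced by the change of variables defining $\widetilde{L}$ in \eqref{eq:Ltilde}, or pay the harmless extra factors $\N{\Dervxt\Phibf(\Re\rC)}_{2}\,\N{\Dervxt\Phibf(\Re\rC)^{-1}}_{2}$ needed to pass between the two. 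Second, your bookkeeping does not visibly land on the stated exponent $2d-2$: counting powers, $\N{G}_{2}\lesssim(1+\N{\rC}_{C^{1}(\bbS^{d-1})})^{2}$ and hence $\N{\VI-GG^{\top}}_{2}\lesssim\N{\Im\rC}_{C^{1}(\bbS^{d-1})}(1+\N{\rC}_{C^{1}(\bbS^{d-1})})^{3}$, which already exceeds $2d-2$ for $d=2$. This is not a substantive defect --- the paper's own accounting is no tighter, and the precise exponent is immaterial because the lemma is only ever invoked under the a priori bound $\N{\rC}_{C^{1}(\bbS^{d-1})}\leq C_{0}$ in the proof of Theorem \ref{thm:existence} --- but if you want the constant exactly as stated you would need to sharpen the count rather than assert it.
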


\bpf[Proof of Theorem \ref{thm:existence} assuming Lemma \ref{lem:difference}]
(i) Given $C_{\rm Re}>0$, let $c_1, c_2>0$ be as in Lemma \ref{lem:strong_elliptic}.
    Assume that 
$\N{\Re \rC}_{C^{1}(\bbS^{d-1})}\leq C_{\Re}$ and  $\|\Im \rC\|_{C^1(\mathbb{S}^{d-1})}\leq c_1$ 
(in the course of the proof we will restrict $\|\Im \rC\|_{C^1(\mathbb{S}^{d-1})}$ further).
In particular, $\N{\rC}_{C^{1}(\bbS^{d-1})}\leq C_{\Re} + c_1=:C_{0}$.

By Corollary \ref{cor:sesqui}, the sesquilinear form $\refs{a}(\cdot,\cdot)$ 
is continuous and satisfies a G\aa rding inequality, with constants independent of $\rC$. 
Therefore, by Fredholm theory, if, under the assumption of existence, one has a bound on the solution
of the Helmholtz transmission problem in terms of the data $\refs{L}$, then the solution
exists and is unique (for more details of this method of arguing applied to the Helmholtz equation, 
see, e.g., \cite[Lemma 3.5]{GPS19}). 

It is therefore sufficient to assume that $\refs{u}(\rC;\cdot)$ exists and show that the
bound \eqref{eq:holo_thm2} holds under the conditions on $\N{\Re \rC}_{C^{1}(\bbS^{d-1})}$
and $\N{\Im \rC}_{C^{1}(\bbS^{d-1})}$ that are implied by $\rC \in \Ca(k;C_{\Re},C_{\Im})$,
with the constant $c_1$ in Lemma~\ref{lem:strong_elliptic} given by $c_1 = C_{\Im} / k$.

By the definition of $\widetilde{L}$ \eqref{eq:Ltilde}, 
the bounds \eqref{eq:2_differences}, 
the assumption $\N{\rC}_{C^{1}(\bbS^{d-1})}\leq C_{0}$, 
and the Cauchy-Schwarz inequality
\begin{align*}
 |\widetilde{L}(v)|
  & \leq   \wn^{-2} \big\|\widetilde{\VA}(\rC,\cdot)\big\|_{L^{\infty}(B_{2})} \N{\nablabf
    u(\rC;\cdot)}_{L^2(B_2)} \N{\nablabf v}_{L^2(B_2)}  + 
  \N{\widetilde{n}(\rC;\cdot)}_{L^{\infty}(B_{2})} \N{u(\rC;\cdot)}_{L^2(B_2)}
  \N{v}_{L^2(B_2)} \\
  & \leq C_{\rm diff}\N{\Im \rC}_{C^{1}(\bbS^{d-1})}
  \Big( \wn^{-2}\N{\nablabf u(\rC;\cdot)}_{L^2(B_2)} \N{\nablabf v}_{L^2(B_2)} 
  +\N{u(\rC;\cdot)}_{L^2(B_2)} \N{v}_{L^2(B_2)} \Big),
\end{align*}
with $C_{\rm diff}>0$ depending only on $C_0,$ $C_{2}$, and on the constants in
Lemma~\ref{lem:difference}.  
Thus, by the definitions of $\|\cdot\|_{(H^1_\wn (B_2))^*}$
and $\|\cdot\|_{H^1_\wn (B_2)}$ in \eqref{eq:weighted_norms} and the Cauchy--Schwarz inequality,
\begin{align*}
  \big\|\widetilde{L}\big\|_{(H^1_\wn (B_2))^*} 
  &
  \leq  C_{\rm diff} \N{\Im \rC}_{C^{1}(\bbS^{d-1})} \N{u(\rC;\cdot)}_{H^1_\wn (B_2)}.
\end{align*}
Applying the a priori bound \eqref{eq:MS4}, 
we obtain that the solution $u$ of \eqref{eq:uhatmapped} satisfies
\begin{equation*}
  \N{u(\rC;\cdot)}_{H^1_\wn (B_2)} 
  \leq 
  \Csolt(k, n_i) \Big( C_{\rm diff} \N{\Im \rC}_{C^{1}(\bbS^{d-1})}
  \N{u(\rC;\cdot)}_{H^1_\wn (B_2)} + \N{L}_{(H^1_\wn (B_2))^*}\Big).
\end{equation*}
Now, if 
\begin{equation}\label{eq:holo_thm5}
   \Csolt(\wn, n_i) C_{\rm diff} \N{\Im \rC}_{C^{1}(\bbS^{d-1})} \leq 1/2,
\end{equation}
then 
\begin{equation}\label{eq:holo_thm2a}
  \N{u(\rC;\cdot)}_{H^1_\wn (B_2)} \leq 2\Csolt(k, n_i)
  \N{L}_{(H^1_\wn (B_2))^*}\;.
\end{equation}
If $\wn \geq 1/2$, then the explicit expressions \eqref{eq:MS2} and
\eqref{eq:MS5} for $\Csolo$ and $\Csolt$, respectively, imply that
\begin{equation*}
  \Csolt(\wn, n_i) \leq 
  \frac{\wn}{n_i} \left( 1+\frac{{8}}{\sqrt{n_i}}\sqrt{ 1 + \frac{1}{n_i} \left(1+\frac{d-1}{2}\right)^2 } \right).
\end{equation*}
We now convert the bound \eqref{eq:holo_thm2a} on $u$ in terms of $L$ into the bound \eqref{eq:holo_thm2} on $\refs{u}$ in terms of $\refs{L}$. Indeed, 
\eqref{eq:holo_thm2} follows (for a suitable $C_1$ depending on $d$, $n_i$, and $C_0$) by using 
the expression for $ \Csolt(\wn, 2, n_i)$ along with the norm equivalence from Lemma \ref{lem:normtrf}
combined with the bound $\N{\rC}_{C^{1}(\bbS^{d-1})}\leq C_{0}$ (and observing that the norm equivalence in
$\|\cdot\|_{H^1_\wn(B_2)}$ implies a similar equivalence in
$\|\cdot\|_{(H^1_\wn(B_2))^*}$).
We now let 
\begin{equation*}
C_{\Im} := \left(
  \frac{2}{n_i} \left( 1+\frac{8}{\sqrt{n_i}}\sqrt{ 1 + \frac{1}{n_i} \left(1+\frac{d-1}{2}\right)^2 } \right) C_{\rm diff}
\right)^{-1}
\end{equation*}
and $k_0:= \min\{C_{\Im}/c_1, 1/2\}$, 
so that the condition on $\Im \rC$ in \eqref{eq:holo_thm1} implies that $\Im \rC$ 
satisfies both \eqref{eq:holo_thm5} and $\|\Im \rC\|_{C^1(\mathbb{S}^{d-1})}\leq c_1$.

(ii) The assumption that $\refs{L}(\refs{v})= \int_{B_2}f \overline{\refs{v}}$ for $f\in L^2(B_2)$ implies that the variational problem \eqref{eq:uhatzdef} is \emph{equivalent} to the (variational formulation of the) 
boundary-value problem
\begin{align*}
k^{-2} \refs{\nablabf}\cdot\big(\refs{\VA}(\rC;\refs{\Bx}) \refs{\nablabf}
      \refs{u}(\rC;\refs{\Bx})\big)&= 
      -\refs{n}(\rC;\refs{\Bx})
      \refs{u}(\rC;\refs{\Bx})- f(\refs{\Bx}) \quad\text{ for } \refs{\Bx} \in B_2,\\
k^{-1} \partial_n \refs{u} &= \dtn\big(\refs{u}|_{\partial B_2}\big) \quad\text{ on } \partial B_2.
\end{align*}
Then, by the $H^2$ regularity result of \cite[Theorem 6.1]{LaSpWu:22}, 
there exists $C>0$ (depending on $\sup_{\refs{\Bx}}\|\refs{\VA}(\rC;\refs{\Bx})\|_2$, $\sup_{\refs{\Bx}}\|\refs{\nablabf}\refs{\VA}(\rC;\refs{\Bx})\|_2$, and the constant $c_2$ in \eqref{eq:strong_elliptic}) 
such that, for all $k\geq 1/2$, say,
\begin{equation*}
\big\|\refs{u}\big\|_{H^2_k(B_2)} \leq C \big( \big\| \refs{u}\big\|_{H^1_k(B_2)} + \big\| f\big\|_{L^2(B_2)}\big).
\end{equation*}
The bound \eqref{eq:holo_thm2_H2} then follows from the bound \eqref{eq:holo_thm2} 
from Part (i) and Lemmas \ref{lem:strong_elliptic} and \ref{lem:continuity} (with the assumption $\|\rC\|_{C^2(\mathbb{S}^{d-1})}\leq C$ required to apply Part (ii) of Lemma \ref{lem:continuity}).
\epf

It therefore remains to prove Lemma \ref{lem:difference}. 
To proceed, we record the following two elementary results.
\begin{lemma}
  \label{lem:elementary}
  Let $V$ be a normed vector space, $D\subset V$ some subset. 
  If the matrix-valued functions
  $\VF_j:D\to \bbR^{m\times m} $, $j=1,\ldots, n$, $m,n\in\bbN$, are such
  that there exist $C_1,C_{2}>0$ such that, for all $n\in \bbN$ and 
  for all $j=1,\ldots, n$,
  \begin{equation}\label{eq:elementary2}
    \N{\VF_j(\Vy)- \VF_j(\Vz)} \leq C_1 \N{\Vy-\Vz}_{V} \;\forall \Vy,\Vz\in D
    \quad\text{ and }\quad \N{\VF_j(\Vy)}\leq C_2\;\forall \Vy\in D
  \end{equation}
  then 
  \begin{equation}\label{eq:elementary1}
    \left\|{ \prod_{j=1}^n\VF_j(\Vy) -\prod_{j=1}^n \VF_j(\Vz)} \right\|\leq  nC_1C_{2}^{n-1} \N{\Vy-\Vz}_{V},
  \end{equation}
  for any sub-multiplicative matrix norm $\left\|\cdot\right\|$. 
\end{lemma}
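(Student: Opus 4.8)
The plan is to reduce the difference of products to a telescoping sum and then bound each summand by submultiplicativity. Throughout, fix $n$ and the matrices $\VF_j$, and read empty products as the identity matrix. First I would establish, for any finite family of $m\times m$ matrices, the algebraic identity
\begin{equation*}
  \prod_{j=1}^n \VF_j(\Vy) - \prod_{j=1}^n \VF_j(\Vz)
  = \sum_{i=1}^n \Big(\prod_{j=1}^{i-1}\VF_j(\Vy)\Big)
    \big(\VF_i(\Vy)-\VF_i(\Vz)\big)
    \Big(\prod_{j=i+1}^{n}\VF_j(\Vz)\Big).
\end{equation*}
This follows by a one-line induction on $n$, or by observing that in the sum on the right the two ``halves'' of consecutive summands cancel in pairs, leaving only $\prod_{j}\VF_j(\Vy)$ minus $\prod_{j}\VF_j(\Vz)$.

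Next I would estimate the norm of each summand using that $\N{\cdot}$ is submultiplicative, together with the two hypotheses in \eqref{eq:elementary2}. The leading factor $\prod_{j=1}^{i-1}\VF_j(\Vy)$ has norm at most $C_2^{\,i-1}$ and the trailing factor $\prod_{j=i+1}^{n}\VF_j(\Vz)$ has norm at most $C_2^{\,n-i}$ (both by the uniform bound $\N{\VF_j}\leq C_2$ and submultiplicativity), while the middle factor $\VF_i(\Vy)-\VF_i(\Vz)$ has norm at most $C_1\N{\Vy-\Vz}_{V}$ by the Lipschitz hypothesis. Multiplying these three estimates and using $(i-1)+(n-i)=n-1$, every summand is bounded by $C_1 C_2^{\,n-1}\N{\Vy-\Vz}_{V}$, uniformly in $i$.

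Finally I would sum the $n$ identical bounds and invoke the triangle inequality to obtain \eqref{eq:elementary1}. There is essentially no obstacle here: the only points requiring (minor) care are verifying the telescoping identity, reading empty products as the identity in the boundary cases $i=1$ and $i=n$, and checking that the exponents of $C_2$ in the leading and trailing factors always add up to $n-1$ regardless of $i$.
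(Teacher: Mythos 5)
Your proposal is correct and follows essentially the same route as the paper: the paper's proof uses exactly the same telescoping decomposition (written with the summation index $k=n-i+1$, but identical term by term), then concludes by sub-multiplicativity and the triangle inequality. Nothing is missing.
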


\bpf
Defining a product to be $\VI$ in case the lower index exceeds the upper, we have 
\begin{align*}
  \prod_{j=1}^n\VF_j(\Vy) -\prod_{j=1}^n \VF_j(\Vz) & = \sum\limits_{k=1}^{n}\left(
    \prod\limits_{j=1}^{n-k+1}\VF_{j}(\Vy)\cdot\prod\limits_{j=n-k+2}^{n}\VF_{j}(\Vz) -
    \prod\limits_{j=1}^{n-k}\VF_{j}(\Vy)\cdot\prod\limits_{j=n-k+1}^{n}\VF_{j}(\Vz)\right) \\
  & =
  \sum\limits_{k=1}^{n}
  \left(
    \left(\prod\limits_{j=1}^{n-k}\VF_{j}(\Vy)\right)(\VF_{n-k+1}(\Vy)-\VF_{n-k+1}(\Vz))
    \left(\prod\limits_{j=n-k+2}^{n}\VF_{j}(\Vz)\right)
  \right)
\end{align*}
(where the right-hand side of the first equality is a telescoping sum). 
The result then follows from sub-multiplicativity and the triangle inequality. 
\epf

\begin{lemma}
  \label{lem:quotest}
Given $z_{1},z_{2}\in\bbC$ with $|z_{1}|,|z_{2}|\geq
  L_{0}\esc{>} 0$, for any two elements $\Vx_{1},\Vx_{2}\in V$ of a normed vector space
  $V$, 
  \begin{gather}
    \label{eq:quotest}
    \left\| \frac{\Vx_{1}}{z_{1}} - \frac{\Vx_{2}}{z_{2}}\right\|_{V} \leq L_{0}^{-2}
    \Big(\N{\Vx_{1}}_{V} \left|z_{2}-z_{1}\right| + |z_{1}|
      \N{\Vx_{2}-\Vx_{1}}_{V}\Big) \;.
  \end{gather}
\end{lemma}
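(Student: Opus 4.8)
The plan is to combine the two quotients over a common denominator and then split the numerator by adding and subtracting a cross term, exactly as in the elementary scalar identity $a/c - b/d = (ad-bc)/(cd)$. Since $|z_1|,|z_2|\geq L_0 > 0$ guarantees that both scalars are nonzero, I would first write
\[
\frac{\Vx_1}{z_1} - \frac{\Vx_2}{z_2} = \frac{z_2\Vx_1 - z_1\Vx_2}{z_1 z_2},
\]
which makes sense in the vector space $V$ because we are only multiplying the elements $\Vx_1,\Vx_2$ by scalars.

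The one genuinely substantive step is the rewriting of the numerator. Inserting $\pm\, z_1\Vx_1$ gives
\[
z_2 \Vx_1 - z_1 \Vx_2 = (z_2 - z_1)\Vx_1 + z_1(\Vx_1 - \Vx_2),
\]
and the whole estimate hinges on peeling off $z_1\Vx_1$ (rather than, say, $z_2\Vx_2$), so that the resulting prefactors $\N{\Vx_1}_V$ and $|z_1|$ match precisely those appearing on the right-hand side of \eqref{eq:quotest}.

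From here the argument is routine. Taking $\N{\cdot}_V$, then using the triangle inequality together with the homogeneity of the norm under scalar multiplication, I obtain
\[
\N{\frac{\Vx_1}{z_1} - \frac{\Vx_2}{z_2}}_V \leq \frac{|z_2 - z_1|\,\N{\Vx_1}_V + |z_1|\,\N{\Vx_1 - \Vx_2}_V}{|z_1|\,|z_2|}.
\]
Finally I would bound the denominator below by $|z_1|\,|z_2| \geq L_0^2$, so that $1/(|z_1|\,|z_2|) \leq L_0^{-2}$, and rewrite $\N{\Vx_1 - \Vx_2}_V = \N{\Vx_2 - \Vx_1}_V$; this yields exactly \eqref{eq:quotest}.

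There is no real obstacle here: the lemma is entirely elementary, and the only point requiring any thought is the asymmetric numerator split in the second step, which must be carried out in the direction that reproduces the factors on the right-hand side of the claimed inequality.
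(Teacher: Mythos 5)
Your proof is correct and follows exactly the same route as the paper's: combining over the common denominator $z_1 z_2$, splitting the numerator as $(z_2-z_1)\Vx_1 + z_1(\Vx_1-\Vx_2)$, and finishing with the triangle inequality and the bound $|z_1 z_2|\geq L_0^2$. Nothing to add.
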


\bpf
The estimate is an immediate consequence of the triangle inequality and 
\begin{gather*}
  \frac{\Vx_{1}}{z_{1}} - \frac{\Vx_{2}}{z_{2}} =
  \frac{z_{2}\Vx_{1}-z_{1}\Vx_{2}}{z_{1}z_{2}} =
  \frac{(z_{2}-z_{1})\Vx_{1}+z_{1}(\Vx_{1}-\Vx_{2})}{z_{1}z_{2}} .
\end{gather*}
\epf

\bpf[Proof of Lemma \ref{lem:difference}] We first prove the bound on $\widetilde{n}$ in
\eqref{eq:2_differences}.  By the bound \eqref{eq:bdd2}, it is sufficient to bound the
numerator in \eqref{eq:widetilden}.  We do this for fixed $\xrf=\refs{\rho}\Bsr\in B_{2}$
using the formula \eqref{eq:DPd} for $\det\Dervxt\Phibf(\rC;\xrf)$ and
Lemma~\ref{lem:elementary} with $V=C^{0}(\bbS^{d-1},\bbC)$, $n=d$, $m=1$,
$f_{1}(\rC;\refs{\Bx})=1+\chi'(\refs{\rho})\rC(\Bsr)$,
$f_{\ell}(\rC;\refs{\Bx})=1+\frac{\chi(\refs{\rho})}{\refs{\rho}}\rC(\Bsr)$, 
and
$\ell=2,\ldots,d$. 
With \eqref{chi}, we find the crude bounds
\begin{gather}
  \label{eq:fjb}
  \begin{aligned}
    |f_{\ell}(\rC;\refs{\Bx})| & \leq{1 + 2 \N{\rC}_{C^{0}(\bbS^{d-1})}}, \\
    |f_{\ell}(\rC;\refs{\Bx})-f_{\ell}(\rC';\refs{\Bx})| & \leq 2 \N{\rC-\rC'}_{C^{0}(\bbS^{d-1})} 
  \end{aligned}\quad\text{ for all } \rC\in\rCS,\; \ell=1,\ldots,d\;,
\end{gather}
and for all $\xrf=\refs{\rho}\Bsr\in B_{2}$, $\Bsr\in\bbS^{d-1}$.
Since the bounds are uniform in $\xrf$ we conclude that
\begin{equation}\label{eq:temp1}
  \sup_{\refs{\Bx}}\big| \det \Dervxt\Phibf(\Re \rC;\refs{\Bx})- \det
  \Dervxt\Phibf(\rC;\refs{\Bx})\big|\leq 2d
  \big(1 + 2 \N{\rC}_{C^{0}(\bbS^{d-1})})\big)^{d-1}
  \,\N{\Im
    \rC}_{C^{0}(\bbS^{d-1})}\;.
\end{equation}
Combining this bound with \eqref{eq:bdd2} and using the fact that the
real part of $\rC$ is bounded by $\frac{1}{3}$, we obtain \eqref{eq:2_differences1} with
$C_{\rm diff,1}>0$ depending only on $d$. 

We now prove the bound \eqref{eq:2_differences2} on $\widetilde{\VA}(\rC,\Bx)$ \eqref{eq:widetildeA}. As above we
fix $\xrf=\refs{\rho}\Bsr\in B_{2}$. Thanks to the lower bound for
$ |\det\Dervxt\Phibf(\rC;\xrf)|$ \eqref{eq:bdd2}, it remains to bound (i) 
$\N{\Dervxt\Phibf(\Re \rC;\refs{\Bx})}_{2}$
 and (ii) 
$ \| \refs{\VA}(\Re \rC; \refs{\Bx}) - \refs{\VA}(\rC; \refs{\Bx})\|_2$. 

Regarding (i), in an almost-identical way to how we obtained \eqref{eq:DPhib_new},
the explicit formula \eqref{eq:DPhi} and
the bounds from \eqref{chi} imply that there exists $C>0$ (depending only on $d$) such that
\begin{gather*}
  \N{\Dervxt\Phibf( \Re\rC;\refs{\Bx})
  }_{2} \leq C\big(1+\N{\Re \rC}_{C^{1}(\bbS^{d-1})}\big)
  \quad\text{ for all } \rC\in \rCS,
\end{gather*}

Regarding (ii), to bound $ \refs{\VA}(\Re \rC; \refs{\Bx}) - \refs{\VA}(\rC; \refs{\Bx})$
we use the expression \eqref{eq:At_new} for $\refs{\VA}(\rC;\refs{\Bx})$ and
Lemma~\ref{lem:elementary} with $m=d$, $n=3$, $V=C^{1}(\bbS^{d-1},\bbC)$ and the
terms $\VF_{j}$ given by the factors in \eqref{eq:At_new}; i.e.
\begin{gather}
  \label{eq:fj_def}
  \VF_{1}(\rC) := \big(\det \Dervxt\Phibf(\rC;\refs{\Bx})\big)^{-1}\VI\;,
  \quad  
  \VF_{2}(\rC) := \Dervxt\Phibf(\rC;\refs{\Bx})^{\sharp}\;,\quad\text{ and } \quad
  \VF_{3}(\rC) :=\big(\Dervxt\Phibf(\rC;\refs{\Bx})^{\sharp}\big)^\top\;.
\end{gather}
For all $\rC \in \rCS$ and $\refs{\Bx}\in B_2$, $\| \VF_1 (\rC)\|_2 \leq 3^d $ by
\eqref{eq:bdd2}, and
\begin{equation*}
\| \VF_1 (\Re \rC) - \VF_1 (\rC) \|_2 \leq 9^d 2 d \big(1+ 2\|\rC\|_{C^0(\mathbb{S}^{d-1})}\big)^{d-1} \| \Im \rC\|_{C^0(\mathbb{S}^{d-1})}.
\end{equation*}
by \eqref{eq:temp1}.
Furthermore, 
$\| \VF_2(\rC)\|_2\leq C( 1 + \|\rC\|_{C^1(\mathbb{S}^{d-1})})$ by \eqref{eq:DPhib_new}.
Now, by the second equation in \eqref{eq:DPhi_inverse},
\begin{gather*}
  \Dervxt\Phibf(\Re \rC;\refs{\Bx})^{\sharp}- \Dervxt\Phibf(\rC;\refs{\Bx})^{\sharp} =-\ri
  \begin{bmatrix}
    \dfrac{\chi(\refs{\rho})}{\refs{\rho}}\Im \rC(\Bsr) & -
    \dfrac{\chi(\refs{\rho})}{\refs{\rho}} \Im \dfrac{d \rC}{d\Bs}(\Bsr) \\
    \mathbf{0} & \chi'(\refs{\Bs}) \Im \rC(\Bsr) \VI_{d-1}\;,
  \end{bmatrix},
\end{gather*}
which implies that 
\begin{gather*}
  \|\VF_2(\Re\rC) -\VF_2 (\rC)\|_2 
  = 
  \N{ \Dervxt\Phibf(\Re \rC;\refs{\Bx})^{\sharp} - \Dervxt\Phibf(\rC;\refs{\Bx})^{\sharp}}_{2} 
  \leq C \N{\Im \rC}_{C^{1}(\bbS^{d-1})}\;,
\end{gather*}
with a constant $C$ depending on $d$ alone. 
Finally, $\VF_3$ satisfies exactly the
same bounds as $\VF_2$; thus we can apply Lemma \ref{lem:elementary} with
\begin{equation*}
C_1 = C (1+\N{\rC}_{C^{0}(\bbS^{d-1})})^{d-1}\N{\Im \rC}_{C^{1}(\bbS^{d-1})}
\quad\text{ and } \quad
C_2=1+\N{ \rC}_{C^{1}(\bbS^{d-1})}\;,
\end{equation*}
to obtain \eqref{eq:2_differences2}.
\epf

\section{\rev{Recap of $\wn$-explicit Finite-Element Error Bounds}}
\label{sec:ExpBdsFEM}


In \S\ref{sec:FreDepHol} we proved existence, uniqueness, and $k$-explicit bounds on
  the solution $\refs{u}(\rC;\cdot)$ to the variational problem \eqref{eq:uhatzdef}; 
  see Theorem~\ref{thm:existence}.
In this section we \esc{recall results from \cite{GaLaSp:21a, GS3} about computing approximations to $\refs{u}(\rC;\cdot)$ using the $h$-version of the FEM (i.e., where the polynomial degree is fixed and accuracy is increased by decreasing $h$) with the radiation condition approximated by a radial perfectly-matched layer (PML).}
    \rev{When applied to our set up, \cite{GaLaSp:21a, GS3} establish FEM error bounds for each $\mathfrak{r}\in\mathcal{A}$ \eqref{eq:Anset}, with the constants in principle depending on $\mathfrak{r}$. 
In this section we go through \cs{some of} the arguments in \cite{GaLaSp:21a, GS3} 
to justify that the constants in these error bounds can be taken to be independent of $\mathfrak{r}\in\mathcal{A}$.}
  
%

\subsection{Definition of radial PML truncation.}
The solution $\refs{u}(\rC;\refs{\Bx})$ to the variational problem \eqref{eq:uhatzdef} 
is approximated by the solution of the following problem.
Let $2< R_1 < R_{\rm tr}<\infty$ and let $\Omega_{\rm tr} \supset B(0, R_{\rm tr})$ 
be a bounded Lipschitz domain, and 
let $\refs{u}_{\rm PML}\in H^1_0(\Omega_{\rm tr})$ be 
the solution of the variational problem 
\begin{align}\label{eq:PMLvf}
  \boxed{
\int_{\Omega_{\rm tr}}
\bigg( \wn^{-2}
\refs{\VA}_{\rm PML}(\rC;\refs{\Bx}) 
\refs{\nablabf} \refs{u}_{\rm PML} (\rC;\refs{\Bx}) 
\cdot 
\overline{\refs{\nablabf} \refs{v}(\rC;\refs{\Bx})} - \refs{n}_{\rm PML}(\rC;\refs{\Bx}) 
\refs{u}_{\rm PML}(\rC;\refs{\Bx})\overline{\refs{v}(\rC;\refs{\Bx})}\bigg)\rd \refs{\Bx} 
= 
\refs{L}(\refs{v}) }
\end{align}
for all $\refs{v} \in H^1_0(\Omega_{\rm tr})$, 
where $\refs{\VA}_{\rm PML}$ and $\refs{n}_{\rm PML}$ 
are defined in terms of $\refs{\VA}, \refs{n}$, 
and functions $\alpha, \beta$ by 
\beq\label{eq:defAPML}
\refs{\VA}_{\rm PML}(\rC;\refs{\Bx}) 
:= 
\begin{cases}
\refs{\VA}(\rC;\refs{\Bx}) & \text{ for } |\refs{\Bx}| \leq R_1 
\\
\VH\VD\VH^T &\text{ for } |\refs{\Bx}|> R_1,
\end{cases}
\quad\text{ and }\quad
\refs{n}_{\rm PML}(\rC;\refs{\Bx}) 
:= 
\begin{cases}
\refs{n}(\rC;\refs{\Bx}) & \text{ for } |\refs{\Bx}| \leq R_1 
\\
\alpha(\refs{\rho}) \beta(\refs{\rho})^{d-1} &\text{ for } |\refs{\Bx}|> R_1,
\end{cases}
\eeq
where, in polar/spherical coordinates 
(with, as in \S\ref{sec:param}, $|\refs{\Bx}|= \refs{\rho}$),
\begin{align*}
  \VD & :=
  \left(
    \begin{array}{cc}
      \beta(\refs{\rho})\alpha(\refs{\rho})^{-1} &0 \\
      0 & \alpha(\refs{\rho}) \beta(\refs{\rho})^{-1}
    \end{array}
  \right)\;, & \VH & :=
\left(
\begin{array}{cc}
\cos \theta & - \sin\theta \\
\sin \theta & \cos\theta
\end{array}
\right) & \text{ for } d=2\;,
\intertext{and}
\VD & :=
\left(
\begin{array}{ccc}
\beta(\refs{\rho})^2\alpha(\refs{\rho})^{-1} &0 &0\\
0 & \alpha(\refs{\rho}) &0 \\
0 & 0 &\alpha(\refs{\rho})
\end{array}
\right) 
\;, &
\VH & :=
\left(
\begin{array}{ccc}
\sin \theta \cos\phi & \cos \theta \cos\phi & - \sin \phi \\
\sin \theta \sin\phi & \cos \theta \sin\phi & \cos \phi \\
\cos \theta & - \sin \theta & 0 
\end{array}
\right)  & 
\text{ for } d=3.
\end{align*}
The functions $\alpha(\refs{\rho})$ and $\beta(\refs{\rho})$ 
are defined as follows:~given a radial function $\widetilde{\sigma}$ and $R_2>R_1$ 
such that 
\begin{subequations}\label{eq_sigma_prop}
\begin{align}
&\widetilde{\sigma}(\refs{\rho})
 = 0 
\text{ for } 
 \refs{\rho}\leq R_1, \\
&\widetilde{\sigma}(\refs{\rho}) \text{ is increasing}  \text{ for } R_1 \leq \refs{\rho}\leq R_2, \text{ and }\\
&\widetilde{\sigma}(\refs{\rho}) =\sigma_0>0  \text{ for } \refs{\rho}\geq R_2,
\end{align}
\end{subequations}
let 
\beq\label{eq_sigma}
\sigma(\refs{\rho}) := \big( \refs{\rho} \widetilde{\sigma}(\refs{\rho})\big)', \quad \alpha(\refs{\rho}) := 1 + \ri \sigma(\refs{\rho}), \quad \text{ and }\quad \beta(\refs{\rho}) := 1 + \ri \widetilde{\sigma}(\refs{\rho}).
\eeq
We note that $\VD=\VI_{d-1}$ and $n=1$ when $\refs{\rho}=R_1$ and thus $\refs{\VA}_{\rm PML}$ and $\refs{n}_{\rm PML}$ are continuous at $r=R_1$. 
We also note that $R_{\rm tr}$ can be $<R_2$, i.e., 
we allow truncation before $\widetilde{\sigma}$ reaches $\sigma_0$. 

\begin{remark}[The definition of the PML scaling function]
  \label{rem:pmlscf}
  Here we have followed, e.g., \cite[\S2]{BrPa:07}, \cite[\S1.2]{GaLaSp:21a} and, starting
  from $\widetilde{\sigma}$, defined $\sigma$ in terms of
  $\widetilde{\sigma}$. Alternatively, one can start from a non-decreasing function
  $\sigma$ and define $\widetilde{\sigma}$ such that the first equation in
  \eqref{eq_sigma} holds; see, e.g., \cite[\S3]{CoMo:98a}, \cite[\S2]{LaSo:98},
  \cite[\S4]{HoScZs:03}, \cite[\S2]{ChGaNiTo:22}. The notation $\alpha$ and $\beta$ is
  also used by \cite{LaSo:98, LiWu:19}.
\end{remark}

\subsection{Properties of the sesquilinear form and solution operator of the PML problem}

\begin{lemma}[Sign property of $\Re (\refs{\VA}_{\rm PML})$]\label{lem:signPML}
Given $C_{\Re}>0$, let $\cA, C_{\Im},k_0$ be as in Corollary \ref{cor:A}.
Given $\widetilde{\sigma}$ as in \eqref{eq_sigma_prop}, 
there exists $C>0$ such that, 
for all $\rC \in \cA$ as defined in \eqref{eq:Anset}, 
for all $\refs{\Bx} \in \mathbb{R}^d$, and 
for all $\xi \in \mathbb{R}^d$, 
\beq\label{eq:signPML}
\Re \big( \refs{\VA}_{\rm PML}(\rC; \refs{\Bx}) \xi, \xi\big)_2 \geq C \|\xi\|^2_2. 
\eeq
\end{lemma}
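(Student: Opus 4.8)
The plan is to exploit the case split built into the definition \eqref{eq:defAPML} of $\refs{\VA}_{\rm PML}$ and to treat the interior $\{|\refs{\Bx}|\le R_1\}$ and the PML region $\{|\refs{\Bx}|> R_1\}$ separately, establishing a lower bound on each and then taking the smaller constant. The decisive structural observation is that in the PML region the coefficient $\refs{\VA}_{\rm PML}=\VH\VD\VH^\top$ does \emph{not} depend on the shape parameter $\rC$, so the bound there is automatically uniform over $\rC\in\cA$; all the $\rC$-dependence sits in the interior, where Lemma~\ref{lem:strong_elliptic} already supplies exactly what is needed.

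First I would dispatch the interior region. For $|\refs{\Bx}|\le R_1$ we have $\refs{\VA}_{\rm PML}(\rC;\refs{\Bx})=\refs{\VA}(\rC;\refs{\Bx})$. The constraint $\wn\N{\Im\rC}_{C^{1}(\bbS^{d-1})}\le C_{\Im}$ defining $\cA$ guarantees, for $k\ge k_0$ and by the construction of $C_{\Im},k_0$ in Theorem~\ref{thm:existence}, that the smallness hypothesis $\N{\Im\rC}_{C^{1}(\bbS^{d-1})}\le c_1$ of Lemma~\ref{lem:strong_elliptic} is met; hence $\Re\big(\refs{\VA}(\rC;\refs{\Bx})\xi,\xi\big)_2\ge c_2\|\xi\|_2^2$ for $\refs{\Bx}\in B_2$. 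On the shell $2\le|\refs{\Bx}|\le R_1$ the cut-off $\chi$ vanishes, so $\Phibf(\rC;\cdot)$ reduces to the identity, $\refs{\VA}(\rC;\refs{\Bx})=\VI$, and the same inequality holds with constant $1$. Thus \eqref{eq:signPML} holds on $\{|\refs{\Bx}|\le R_1\}$ with constant $\min\{c_2,1\}$, uniformly in $\rC\in\cA$.

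Next I would handle the PML region by diagonalising. Since $\VH$ is real orthogonal ($\VH\VH^\top=\VI$), setting $\eta:=\VH^\top\xi\in\Rea^d$ gives $\|\eta\|_2=\|\xi\|_2$ and, because $\VD$ is diagonal,
\begin{equation*}
  \Re\big(\VH\VD\VH^\top\xi,\xi\big)_2=\Re\big(\VD\eta,\eta\big)_2=\sum_{j=1}^{d}\Re(\VD_{jj})\,\eta_j^2,
\end{equation*}
so it suffices to bound each $\Re(\VD_{jj})$ below by a positive constant uniform in $\refs{\rho}$. Writing $\sigma_{\max}:=\sup_{\refs{\rho}\ge0}\sigma(\refs{\rho})<\infty$ and recalling from \eqref{eq_sigma} that $\alpha=1+\ri\sigma$, $\beta=1+\ri\widetilde{\sigma}$ with $\sigma,\widetilde{\sigma}\ge0$, a short computation gives, for $d=2$, $\Re(\beta\alpha^{-1})=(1+\sigma\widetilde{\sigma})/(1+\sigma^2)\ge(1+\sigma_{\max}^2)^{-1}$ and $\Re(\alpha\beta^{-1})=(1+\sigma\widetilde{\sigma})/(1+\widetilde{\sigma}^2)\ge(1+\sigma_0^2)^{-1}$; and for $d=3$, $\Re(\alpha)=1$ together with $\Re(\beta^2\alpha^{-1})=(1-\widetilde{\sigma}^2+2\sigma\widetilde{\sigma})/(1+\sigma^2)$. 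Every entry is then bounded below by $C':=(1+\sigma_{\max}^2)^{-1}$, so \eqref{eq:signPML} holds in the PML region with constant $C'$, and the lemma follows with $C:=\min\{c_2,1,C'\}$.

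The step I expect to be the main obstacle is the positivity of the $d=3$ entry $\Re(\beta^2\alpha^{-1})$, which, unlike all the others, is not manifestly nonnegative. To control it I would invoke the monotonicity in \eqref{eq_sigma_prop}: since $\widetilde{\sigma}$ is non-decreasing and $\refs{\rho}\ge0$, the identity $\sigma=(\refs{\rho}\widetilde{\sigma})'=\widetilde{\sigma}+\refs{\rho}\widetilde{\sigma}'$ yields $\sigma\ge\widetilde{\sigma}$, whence $1-\widetilde{\sigma}^2+2\sigma\widetilde{\sigma}\ge1+\widetilde{\sigma}^2\ge1$ and the claimed lower bound $(1+\sigma_{\max}^2)^{-1}$. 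This is exactly the standard structural property of a radial PML that secures its real-part ellipticity; boundedness of $\sigma$, needed to keep the denominators away from zero, is part of the standing regularity assumptions on $\widetilde{\sigma}$.
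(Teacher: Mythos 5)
Your proof is correct, and its overall architecture -- split $\mathbb{R}^d$ into $\{|\refs{\Bx}|\le R_1\}$ and the PML region, invoke Lemma~\ref{lem:strong_elliptic} (via the design of $C_{\Im},k_0$) for the former, and exploit the $\rC$-independence of $\VH\VD\VH^\top$ for the latter -- is exactly the paper's. The difference is in how the PML region is handled: the paper simply cites \cite[Lemma~2.3]{GLSW22} for the sign property there, whereas you prove it from scratch by conjugating with the orthogonal matrix $\VH$ and bounding $\Re(\VD_{jj})$ below entrywise. Your computation is right, including the one genuinely delicate point: for $d=3$ the entry $\Re(\beta^2\alpha^{-1}) = (1-\widetilde{\sigma}^2+2\sigma\widetilde{\sigma})/(1+\sigma^2)$ is not manifestly positive, and you correctly rescue it from the identity $\sigma=(\refs{\rho}\,\widetilde{\sigma})'=\widetilde{\sigma}+\refs{\rho}\,\widetilde{\sigma}'\ge\widetilde{\sigma}$, which gives $1-\widetilde{\sigma}^2+2\sigma\widetilde{\sigma}\ge 1+\widetilde{\sigma}^2$. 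You also correctly patch the gap between $B_2$ (where Lemma~\ref{lem:strong_elliptic} applies) and $B_{R_1}$ by noting $\Phibf(\rC;\cdot)=\mathrm{id}$ and hence $\refs{\VA}=\VI$ on that shell, a point the paper leaves implicit. What the self-contained route buys is transparency about exactly which properties of $\widetilde{\sigma}$ in \eqref{eq_sigma_prop} are used (nonnegativity, monotonicity, and boundedness of $\sigma$); what the citation buys is brevity and consistency with the conventions of \cite{GLSW22}, which the paper reuses elsewhere.
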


\bpf
\esc{The sign property \eqref{eq:signPML} with the constant $C$ not uniform in $\rC\in \mathcal{A}$ is given in, e.g., \cite[Lemma 2.3]{GLSW22} (where $\widetilde{\sigma}(r)= f_\theta(r)/r$ in the notation of \cite[Lemma 2.3]{GLSW22}). 
Since $\rC$ does not enter the definition of the PML scaling function $\widetilde{\sigma}$, the fact that $C$ in \eqref{eq:signPML} can be taken to be independent of 
$\rC\in \mathcal{A}$ follows from Lemma \ref{lem:strong_elliptic}.}
In using Lemma \ref{lem:strong_elliptic}, we use the fact that, \esc{by design,} the constants $C_{\Im},k_0$ from Corollary \ref{cor:A} are such that if $\rC \in \mathcal{A}$ defined by \eqref{eq:Anset}, then the assumptions of Lemma  \ref{lem:strong_elliptic} are satisfied.
\epf

Lemma \ref{lem:signPML} and Corollary \ref{cor:sesqui} then imply the following analogue of Corollary \ref{cor:sesqui}.

\begin{corollary}\label{cor:PMLGarding}
The sesquilinear form in the variational problem \eqref{eq:PMLvf} is continuous and satisfies a G\aa rding inequality, with both the continuity constant and the constants in the G\aa rding inequality \emph{independent of $\rC$} for $\rC \in \cA$.
\end{corollary}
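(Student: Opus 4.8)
The plan is to follow the template of Corollary~\ref{cor:sesqui}, but now on the truncated domain $\Omega_{\rm tr}$ and with the DtN term replaced by the volume contribution of the PML layer $\Omega_{\rm tr}\setminus B_{R_1}$. Write $\refs{a}_{\rm PML}(\rC;\cdot,\cdot)$ for the sesquilinear form in \eqref{eq:PMLvf}. First I would record the two structural facts that make the constants uniform over $\cA$. Since $R_1>2$ and, by Lemma~\ref{lem:propphi}, $\Phibf(\rC;\cdot)$ agrees with the identity outside the spherical shell $\{\lambda<|\refs{\Bx}|<2-\lambda\}$, the coefficients $\refs{\VA}_{\rm PML}(\rC;\cdot)$ and $\refs{n}_{\rm PML}(\rC;\cdot)$ depend on $\rC$ only through their restriction to $B_2\subset B_{R_1}$; in the PML region they reduce, by \eqref{eq:defAPML}, to the fixed, $\rC$-independent expressions $\VH\VD\VH^\top$ and $\alpha\beta^{d-1}$ built from the fixed scaling function $\widetilde\sigma$. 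Moreover, for $\rC\in\cA$ defined by \eqref{eq:Anset} we have $\N{\rC}_{C^1(\bbS^{d-1})}\leq C_{\Re}+C_{\Im}/\wn\leq C_{\Re}+2C_{\Im}$ because $\wn\geq k_0\geq\tfrac12$, so the ($\rC$-dependent) bounds of Part~(i) of Lemma~\ref{lem:continuity} yield $L^\infty$ bounds on $\refs{\VA}_{\rm PML}$ and $\refs{n}_{\rm PML}$ that are uniform over $\cA$.

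For continuity I would apply the Cauchy--Schwarz inequality to the two integrals in \eqref{eq:PMLvf}, using the uniform $L^\infty$ bounds just described inside $B_{R_1}$ together with the fixed bounds on the PML coefficients in $\Omega_{\rm tr}\setminus B_{R_1}$; this is exactly the estimate underlying the continuity half of Corollary~\ref{cor:sesqui}, only without the DtN term (whose absence can only help), and it gives $|\refs{a}_{\rm PML}(\rC;\refs{u},\refs{v})|\leq c_5\,\N{\refs{u}}_{H^1_k(\Omega_{\rm tr})}\N{\refs{v}}_{H^1_k(\Omega_{\rm tr})}$ with $c_5$ independent of $\rC\in\cA$ and of $\wn\geq\tfrac12$.

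For the G\aa rding inequality I would split $\Re\big(\refs{a}_{\rm PML}(\rC;\refs{v},\refs{v})\big)$ into its second-order and zeroth-order parts. The second-order part is controlled from below using the sign property of Lemma~\ref{lem:signPML}, which holds pointwise on all of $\bbR^d$ with a constant $C$ uniform over $\cA$; integrating gives $\wn^{-2}\int_{\Omega_{\rm tr}}\Re(\refs{\VA}_{\rm PML}\,\refs{\nablabf}\refs{v}\cdot\overline{\refs{\nablabf}\refs{v}})\geq C\wn^{-2}\N{\refs{\nablabf}\refs{v}}^2_{L^2(\Omega_{\rm tr})}$. The zeroth-order part is bounded in modulus by the uniform $L^\infty$ bound on $\Re(\refs{n}_{\rm PML})$, contributing only a multiple of $\N{\refs{v}}^2_{L^2(\Omega_{\rm tr})}$. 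Using $\wn^{-2}\N{\refs{\nablabf}\refs{v}}^2_{L^2(\Omega_{\rm tr})}=\N{\refs{v}}^2_{H^1_k(\Omega_{\rm tr})}-\N{\refs{v}}^2_{L^2(\Omega_{\rm tr})}$ then yields $\Re\big(\refs{a}_{\rm PML}(\rC;\refs{v},\refs{v})\big)\geq c_2\N{\refs{v}}^2_{H^1_k(\Omega_{\rm tr})}-c_6\N{\refs{v}}^2_{L^2(\Omega_{\rm tr})}$ with $c_2,c_6$ independent of $\rC\in\cA$.

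The argument is essentially routine given Lemmas~\ref{lem:signPML} and~\ref{lem:continuity}, and the only point requiring care is bookkeeping on the uniformity of the constants. The genuine conceptual difficulty, namely coercivity of the complex-scaled second-order coefficient in the lossy PML region, has already been dispatched in Lemma~\ref{lem:signPML}; its constant is $\rC$-independent precisely because $\rC$ does not enter the PML scaling function and because the interior contribution is supplied by the $\rC$-uniform Lemma~\ref{lem:strong_elliptic}. In contrast to the original form $\refs{a}$, no (sign-indefinite, nonlocal) DtN term appears here, so the role it played in Corollary~\ref{cor:sesqui} is taken over by the manifestly local volume integral over the PML layer, handled by the same two uniform bounds.
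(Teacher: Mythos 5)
Your proposal is correct and takes essentially the same route as the paper, which gives no written proof but states the corollary as an immediate consequence of Lemma~\ref{lem:signPML} (for the $\rC$-uniform lower bound on $\Re(\refs{\VA}_{\rm PML})$) and Corollary~\ref{cor:sesqui} (for the $\rC$-uniform upper bounds on the coefficients), exactly the two ingredients you combine. Your observations that the PML-region coefficients are $\rC$-independent and that the absent DtN term only simplifies matters are the correct bookkeeping behind that one-line justification.
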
 

\begin{theorem}[PML solution operator inherits behaviour of non-truncated solution operator]\label{thm:inherit}
  Given $n_i<1,$ $d=2$ or $3$, and $C_{\Re}>0$, let $\cA$ and $C_{\Im}$ be as in \eqref{eq:Anset}. 
  Then, given $\widetilde{\sigma}\in C^3(0,\infty)$ as in \eqref{eq_sigma_prop},
  there exists $C, k_1 \geq \frac12$ such that, for all $\rC \in \cA$, for any
  $\refs{L}\in (H^1_\wn (B_2))^*$, and for all $k\geq k_1$, the solution
  $\refs{u}_{\rm PML}(\rC;\cdot)$ of \eqref{eq:PMLvf} exists, 
  is unique, and satisfies the bound
  \begin{equation}
    \label{eq:inherit1}
    \N{\refs{u}_{\rm PML}(\rC;\cdot)}_{H^1_\wn (\Omega_{\rm tr})} 
    \leq C \wn  \big\|\refs{L}\big\|_{(H^1_\wn(\Omega_{\rm tr}))^*}\;.
  \end{equation}
\end{theorem}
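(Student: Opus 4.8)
The plan is to derive the a priori bound \eqref{eq:inherit1} from the bound on the \emph{non-truncated} solution operator in Theorem~\ref{thm:existence} via the ``PML inherits stability'' mechanism of \cite{GaLaSp:21a, GS3}, the only genuinely new point being that all constants can be taken uniform in $\rC \in \cA$. First I would dispose of existence and uniqueness: by Corollary~\ref{cor:PMLGarding} the sesquilinear form of \eqref{eq:PMLvf} is continuous and satisfies a G\aa rding inequality with constants independent of $\rC \in \cA$, so by Fredholm theory (as already used in the proof of Theorem~\ref{thm:existence}; cf.\ \cite[Lemma~3.5]{GPS19}) it suffices to prove \eqref{eq:inherit1} under the assumption that $\refs{u}_{\rm PML}(\rC;\cdot)$ exists, with existence and uniqueness then following automatically.

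The structural fact that drives the uniformity is that the $\rC$-dependence of problem \eqref{eq:PMLvf} is confined to $B_2$. Indeed, by Lemma~\ref{lem:propphi} the diffeomorphism $\Phibf(\rC;\cdot)$ equals the identity outside the shell $\{\lambda < |\refs{\Bx}| < 2-\lambda\}$, so $\refs{\VA}(\rC;\refs{\Bx}) = \VI$ and $\refs{n}(\rC;\refs{\Bx}) = 1$ for $|\refs{\Bx}| \geq 2-\lambda$. Because $R_1 > 2$, the PML coefficients $\refs{\VA}_{\rm PML}$, $\refs{n}_{\rm PML}$ of \eqref{eq:defAPML} reduce in the whole layer $\{|\refs{\Bx}| > R_1\}$ to the fixed, $\rC$-independent complex-scaled coefficients built from $\VH$, $\VD$ and $\widetilde{\sigma}$; likewise the outgoing continuation of the true solution into $\{|\refs{\Bx}| > 2\}$ solves a standard, $\rC$-free Helmholtz equation there. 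Thus $\rC$ enters only through the coefficient data on $B_2$, which is bounded above uniformly on $\cA$ by Lemma~\ref{lem:continuity} and is uniformly elliptic by Lemma~\ref{lem:signPML}.

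With this in place I would run the exponential-accuracy comparison of \cite{GaLaSp:21a, GS3}, which bounds the truncated PML solution operator by the true (non-truncated) solution operator plus a term exponentially small in $k$, the rate and all prefactors depending only on $\widetilde{\sigma}$, the radii $R_1, R_2, R_{\rm tr}$, the domain $\Omega_{\rm tr}$, and the above $\cA$-uniform coefficient bounds on $B_2$. Feeding in $\N{\refs{u}(\rC;\cdot)}_{H^1_\wn (B_2)} \leq C_1 \wn \N{\refs{L}}_{(H^1_\wn (B_2))^*}$ from part~(i) of Theorem~\ref{thm:existence} (uniform over $\cA$), relating the data norms on $B_2$ and $\Omega_{\rm tr}$ by $\rC$-independent constants, and taking $k_1 \geq \max\{k_0, 1/2\}$ large enough that the exponentially small correction is at most $1$, produces \eqref{eq:inherit1} with a single $C$ valid for all $\rC \in \cA$. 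The main obstacle is precisely this last step: rather than re-proving exponential accuracy, the work is to re-examine each constant arising in \cite{GaLaSp:21a, GS3} and check that it depends only on the $\rC$-independent data listed above; this goes through because the complex scaling, and hence the exponential decay responsible for the truncation error, is supported entirely in $\{|\refs{\Bx}| > R_1 > 2\}$, where the problem does not see $\rC$ at all, so the perturbation never enters the estimates governing the truncation error and enters the rest only via quantities already controlled uniformly on $\cA$.
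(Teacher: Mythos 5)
Your high-level strategy matches the paper's: existence and uniqueness via Fredholm theory from Corollary~\ref{cor:PMLGarding}, the a priori bound inherited from the non-truncated problem via \cite{GaLaSp:21a}, and uniformity over $\rC\in\cA$ traced back to the uniformity of Theorem~\ref{thm:existence} together with the observation that $\rC$ only affects the coefficients on $B_2$ while the PML region is $\rC$-independent. That last structural observation is correct and is indeed what the paper uses (cf.\ Remark~\ref{rmk:PMLHol}). However, there is a genuine gap: you treat the applicability of \cite[Theorem 1.6]{GaLaSp:21a} as automatic and identify the ``main obstacle'' as constant-tracking, whereas the actual obstacle — and the bulk of the paper's proof — is that for $\Im\rC\neq 0$ the coefficients $\refs{\VA}(\rC;\cdot)$, $\refs{n}(\rC;\cdot)$ are complex, so the operator is not formally self-adjoint and the problem does \emph{not} fit the black-box scattering framework in which \cite[Theorem 1.6]{GaLaSp:21a} is proved. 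The paper has to re-verify the two black-box ingredients by hand: (i) that the PML problem is Fredholm of index zero (this part you effectively cover via Corollary~\ref{cor:PMLGarding}), and (ii) the agreement away from the scaling region of the complex-scaled and unscaled solution operators \cite[Theorem 4.37]{DyZw:19}, whose proof uses self-adjointness precisely to produce a complex wavenumber at which the unscaled problem is uniquely solvable with natural bounds, so that analytic Fredholm theory applies. The paper replaces that step by a direct integration-by-parts argument at $k=\ri\lambda$ using Lemma~\ref{lem:signPML} and the definition \eqref{eq:nt} of $\refs{n}$. Your proposal never confronts (ii), and the fact that the complex scaling lives in $\{|\refs{\Bx}|>R_1\}$ where $\rC$ is invisible does not resolve it, since the non-self-adjointness sits inside the ``black box'' on $B_2$.

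A secondary issue: you phrase the comparison as ``truncated solution $=$ true solution $+$ exponentially small error'' and then absorb the exponential term for $k\geq k_1$. In this paper that exponential-accuracy statement (Theorem~\ref{thm:GLS}, i.e.\ \cite[Theorem 1.4]{GaLaSp:21a}) is only available for $L^2$ right-hand sides, whereas Theorem~\ref{thm:inherit} is asserted for arbitrary $\refs{L}\in (H^1_\wn(B_2))^*$; the paper instead invokes the direct solution-operator bound of \cite[Theorem 1.6]{GaLaSp:21a}, which bounds the PML solution operator by the non-truncated one for general data. So even setting aside the black-box issue, your route as written would only deliver the bound for $L^2$ data.
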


\bpf
\esc{The result with $C$ in \eqref{eq:inherit1} not uniform in $\rC\in \mathcal{A}$ follows from \cite[Theorem 1.6]{GaLaSp:21a}.
}
Indeed, \cite[Theorem 1.6]{GaLaSp:21a} proves that the solution operator of the PML problem 
is bounded by the solution operator of the original (i.e., non-truncated) problem 
for all scattering problems \esc{belonging to} the black-box scattering framework of \cite{SjZw:91}. 

\esc{We now justify that $C$ in \eqref{eq:inherit1} can be taken to be independent of $\rC\in \mathcal{A}$. An apparent difficulty is that }
the variational problem \eqref{eq:uhatzdef} does not fit in the black-box framework as
described in, e.g., \cite[Chapter 4]{DyZw:19}, since the PDE is not formally self-adjoint
when $\Im \rC\neq 0$ (because the coefficients are complex). Nevertheless, the results
from the black-box framework required to prove \cite[Theorem 1.6]{GaLaSp:21a} still
hold. Indeed, \cite[Theorem 1.6]{GaLaSp:21a} follows from \cite[Lemma 3.3]{GaLaSp:21a},
and the two ingredients of this lemma that depend on the contents of the black box are (i)
that the PML problem is Fredholm of index zero, and (ii) agreement away from the scaling
region of the solution operators of the complex-scaled and unscaled Helmholtz problems
(see, e.g., \cite[Theorem 4.37]{DyZw:19}).

\esc{Regarding (i):~for the variational problem \eqref{eq:PMLvf}, this property holds since the sesquilinear form is continuous and satisfies a G\aa rding inequality (by Corollary \ref{cor:PMLGarding}) and the solution is unique for each $\rC \in \mathcal{A}$ by the a priori bound of \cite[Theorem 1.6]{GaLaSp:21a} (it does not matter that the constant a priori depends on $\rC\in \mathcal{A}$ to establish this uniqueness)}.

Regarding
(ii):~inspecting the proof of \cite[Theorem 4.37]{DyZw:19}, we see that the only place
where this proof uses that the contents of the black-box are self adjoint is in finding a
complex $k$ where the (unscaled) problem can be shown to have a unique solution and
satisfies the natural bound on the solution operator (in terms of $|k|$-dependence); see
\cite[Lemma 4.3]{DyZw:19} \esc{(the existence of such a complex $k$ then allows one to use analytic Fredholm theory; see, e.g., \cite[Theorem C.8]{DyZw:19})}. In our case, if $k=\ri \lambda$, then one can prove directly
from the variational formulation \esc{(i.e., by integration by parts)}, using Lemma \ref{lem:signPML} and the definition of
$\refs{n}$ \eqref{eq:nt}, that the problem \eqref{eq:uhatzdef} with $\rC \in \cA$ has a
unique solution and satisfies the bound \eqref{eq:holo_thm2} with $k$ replaced by
$\lambda$.  \epf

\begin{remark}\label{rmk:PMLHol}
As the transformations $\refs{\Bx}\mapsto \Phibf(\rC;\refs{\Bx})$
leave the PML-zone invariant, 
$\rC\mapsto\refs{u}_{\rm PML}(\rC;\cdot)$ is still analytic on $\Ca$.
\end{remark}

\subsection{The accuracy of PML truncation for $k$ large.}


\begin{theorem}[Radial PMLs are exponentially accurate for $k$ large]\label{thm:GLS}
Given $n_i<1$, $d=2$ or $3$, and $C_{\Re}>0$, let $\cA$ and $C_{\Im}$ be as in Corollary \ref{cor:A}.
Given $\widetilde{\sigma}\in C^3(0,\infty)$ as in \eqref{eq_sigma_prop}, and $\epsilon>0$, there exist $C_{\rm PML, 1}, C_{\rm PML, 2}, k_1>0$ such that  
the following is true for all 
$R_{\rm tr}>(1 + \epsilon)R_1$, $\rC \in \cA$, and $k\geq k_1$.

Given $\refs{f}\in L^2(B_2)$, let $\refs{u}$ be the solution of the variational problem \eqref{eq:uhatzdef} with $\refs{L}(\refs{v}) = \int_{B_2}\refs{f}\overline{\refs{v}}\rd \refs{\Bx}$, and let $\refs{u}_{\rm PML}(\rC;\cdot)$ be the solution of the variational problem \eqref{eq:PMLvf} with the same $\refs{L}$. Then $\refs{u}_{\rm PML}(\rC;\cdot)$ exists, is unique, and satisfies 
\beq\label{eq:GLS}
\|\refs{u}(\rC;\cdot)-\refs{u}_{\rm PML}(\rC;\cdot)\|_{H^1_k(B(0,R_1))} \leq C_{\rm PML, 1} \exp \Big( - C_{\rm PML, 2} k\big(R_{\rm tr}-(1+\epsilon) R_1\big)
\Big) 
\big\|\refs{f}\big\|_{L^2(B_2)}.
\eeq
\end{theorem}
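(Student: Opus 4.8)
The plan is to follow the same strategy that organizes the rest of \S\ref{sec:ExpBdsFEM}: invoke the exponential-accuracy result of \cite{GaLaSp:21a, GS3} for each fixed $\rC\in\cA$, and then argue that every constant entering the resulting bound can be chosen uniformly over $\cA$. Concretely, \cite{GaLaSp:21a, GS3} establish a bound of the shape \eqref{eq:GLS} for the radial PML applied to any member of the black-box scattering framework of \cite{SjZw:91} whose non-truncated solution operator is polynomially bounded in $k$; applied to \eqref{eq:uhatzdef} with $\rC$ held fixed this already yields \eqref{eq:GLS}, but with $C_{\rm PML,1}, C_{\rm PML,2}, k_1$ a priori depending on $\rC$. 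Existence and uniqueness of $\refs{u}_{\rm PML}(\rC;\cdot)$ for $k\geq k_1$ is supplied (uniformly over $\cA$) by Theorem~\ref{thm:inherit}, so only the $\rC$-uniformity of the two exponential constants remains.

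The second and central step is to trace the $\rC$-dependence of those constants. First I would note, via Remark~\ref{rmk:PMLHol}, that the map $\refs{\Bx}\mapsto\Phibf(\rC;\refs{\Bx})$ leaves the PML region $\{|\refs{\Bx}|>R_1\}$ invariant, so the scaling data $\alpha,\beta,\widetilde{\sigma}$ do not depend on $\rC$. Hence the complex-scaling decay that produces the exponential rate is $\rC$-independent, and $C_{\rm PML,2}$ depends only on $\widetilde{\sigma}$ and on the geometry $R_1,\epsilon$. The prefactor $C_{\rm PML,1}$ in the \cite{GaLaSp:21a, GS3} bound depends only on: (i) the norm of the non-truncated solution operator, which grows like $\wn$ \emph{uniformly} over $\cA$ by Theorem~\ref{thm:existence}; (ii) the norm of the PML solution operator, which inherits the same $\wn$-growth uniformly over $\cA$ by Theorem~\ref{thm:inherit}; (iii) the continuity and G\aa rding constants of the PML sesquilinear form, shown $\rC$-uniform in Corollary~\ref{cor:PMLGarding}; and (iv) upper bounds on the coefficients $\refs{\VA},\refs{n}$ on $B_{R_1}$, which are $\rC$-uniform by Lemma~\ref{lem:continuity}. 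Assembling these uniform inputs and taking $k_1$ to be the maximum of the thresholds arising from Theorems~\ref{thm:existence} and~\ref{thm:inherit} and from \cite{GaLaSp:21a, GS3}, I would conclude that $C_{\rm PML,1}, C_{\rm PML,2}, k_1$ can all be chosen independent of $\rC\in\cA$, giving \eqref{eq:GLS}.

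The main obstacle is exactly the one already flagged in the proof of Theorem~\ref{thm:inherit}: the exponential-accuracy machinery of \cite{GaLaSp:21a, GS3} rests on the black-box / complex-scaling framework of \cite{SjZw:91, DyZw:19}, which presupposes a formally self-adjoint unscaled operator, whereas for $\Im\rC\neq 0$ the coefficients of \eqref{eq:uhatzdef} are complex and the operator fails to be self-adjoint. I would dispatch this precisely as in Theorem~\ref{thm:inherit}: the only place self-adjointness enters the complex-scaling statements underlying \eqref{eq:GLS} (e.g.\ the free-resolvent decay and \cite[Theorem~4.37]{DyZw:19}) is to exhibit a purely imaginary $k=\ri\lambda$ at which the unscaled problem is uniquely solvable with the natural $|k|$-dependent bound. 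This input I would obtain directly from the variational formulation by integration by parts, using the sign property of Lemma~\ref{lem:signPML} together with the definition of $\refs{n}$ in \eqref{eq:nt}, exactly as in the derivation of \eqref{eq:holo_thm2}. Once this ingredient is available uniformly over $\cA$, the exponential-accuracy argument of \cite{GaLaSp:21a, GS3} goes through verbatim with $\rC$-independent constants, completing the proof.
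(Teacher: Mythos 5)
Your proposal is correct and follows essentially the same route as the paper: cite the exponential-accuracy result of \cite{GaLaSp:21a} to get \eqref{eq:GLS} with $\rC$-dependent constants, then reuse the argument from the proof of Theorem~\ref{thm:inherit} — in particular, handling the failure of self-adjointness for $\Im\rC\neq 0$ by exhibiting a purely imaginary $k=\ri\lambda$ at which the unscaled problem is uniquely solvable via integration by parts and Lemma~\ref{lem:signPML} — to make the constants uniform over $\cA$. Your explicit tracing of which constants depend on which uniform inputs is a more detailed write-up of what the paper compresses into ``the same arguments in the proof of Theorem~\ref{thm:inherit} above show that $C_{\rm PML,1}, C_{\rm PML,2}, k_1$ can be taken independent of $\rC$.''
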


\bpf
\esc{The result with the constants $C_{\rm PML, 1}, C_{\rm PML, 2}, k_1$ not uniform in $\rC\in \mathcal{A}$ follows from \cite[Theorem 1.4]{GaLaSp:21a}.
The same arguments in the proof of Theorem \ref{thm:inherit} above show that $C_{\rm PML, 1}, C_{\rm PML, 2}, k_1$ can be taken independent of $\rC\in \mathcal{A}$.
Indeed, like \cite[Theorem 1.4]{GaLaSp:21a}, \cite[Theorem 1.6]{GaLaSp:21a} uses 
}
agreement away from the scaling region of the solution operators of the complex-scaled and unscaled Helmholtz problems \cite[Theorem 4.37]{DyZw:19}; this result holds for our problem as explained in the proof of Theorem \ref{thm:inherit}.
\epf

Theorem \ref{thm:GLS} shows that the error in approximating $\refs{u}(\rC;\cdot)$ by $\refs{u}_{\rm PML}(\rC;\cdot)$ 
decreases exponentially in the wavenumber $k$ and the PML width $R_{\rm tr}-R_1$, 
uniformly for $\rC\in \cA$.

\subsection{The accuracy of the $h$-FEM approximation of the PML solution}\footnote{\cs{Here, and throughout, 
the notion ``$h$-FEM'' refers to the situation where p is fixed and accuracy is increased by varying $h$.}} 
\label{sec:hFEAcc}
We consider subspaces $(V_h)_{h>0}$ of $H^1(\Omega_{\rm tr})$ 
satisfying the following assumption.
\begin{assumption}\label{ass:spaces}
  For some ``polynomial degree'' $p\in \mathbb{N}$, the subspaces $(V_{h})_{h>0}$
  satisfy the following: there exists a constant $C>0$ such that, for all $h>0$ and
  $0\leq \ell\leq p$, given $v \in H^1_0(\Omega_{\rm tr})\cap H^{\ell+1}(\Omega_{\rm tr})$
  there exists $\mathcal{I}_{h,p}v \in V_{h,p}$ such that \beq\label{eq:pp_approx} |v-
  \mathcal{I}_{h,p}v|_{H^j(\Omega_{\rm tr})} \leq C h^{\ell + 1-j} \|v\|_{H^{\ell
      +1}(\Omega_{\rm tr})}.  \eeq
\end{assumption}
Assumption \ref{ass:spaces} holds when $(V_h)_{h>0}$ consists of 
functions that are continuous in $\Omega_{\rm tr}$ and piecewise polynomials of total degree $p$ 
on a shape-regular family of simplicial triangulations of $\Omega_{\rm tr}$,
indexed by the meshwidth $h$; see, e.g.,
\cite[Theorem 17.1]{Ci:91}, \cite[Proposition 3.3.17]{BrSc:08}.

Let $\refs{a}_{\rm PML}(\rC;\cdot,\cdot)$ 
denote the sesquilinear form on the left-hand side of \eqref{eq:PMLvf}.  
The sequence of Galerkin solutions of the PML problem is defined by
\begin{equation}\label{eq:FEM}
  \boxed{\text{find} \quad\refs{u}_{\rm PML}(\rC;\cdot)_h \in V_h\,\, \text{ such that }\,\, 
  \refs{a}_{\rm PML}(\rC;\refs{u}_{\rm PML}(\rC;\cdot)_h ,\refs{v}_h)
  = 
  \refs{L}(\refs{v}_h) \quad\text{ for all } \refs{v}_h \in V_h\;.}
\end{equation}

Our result about $k$-explicit convergence of the $h$-FEM (Theorem \ref{thm:FEM1} below)
requires that, for
$\gamma\geq p-1$,
$\refs{\VA}_{\rm PML}(\rC;\cdot) \in C^{\gamma,1}$ and $\refs{n}_{\rm PML}(\rC;\refs{\Bx})\in C^{\gamma-1,1}$  with the corresponding norms bounded independently of $\rC$ (the reason
for this is explained in the discussion preceding Lemma \ref{lem:shift} and the proof of Theorem \ref{thm:FEM1}).  
For simplicity, \emph{we assume that $\boxed{p = \gamma +1}$}, i.e., given the regularity of
  $\refs{\VA}_{\rm PML}(\rC;\cdot)$  and $\refs{n}_{\rm PML}(\rC;\refs{\Bx})$, we take the minimal polynomial degree to obtain the
  best-possible FEM convergence result for that regularity.
\esc{Since the regularity of   $\refs{\VA}_{\rm PML}(\rC;\cdot)$  and $\refs{n}_{\rm PML}(\rC;\refs{\Bx})$ depend on the regularity of the radial displacement function (via \eqref{eq:coeffs}), this means that \emph{we are selecting the minimal polynomial degree to obtain the
  best-possible FEM convergence result for the given regularity of the transmission interface.}}

By the definitions of $\refs{\VA}_{\rm PML}(\rC;\cdot)$ 
in terms of
$\refs{\VA}(\rC;\cdot)$ \eqref{eq:At}, 
for $\refs{\VA}_{\rm PML}(\rC;\cdot) \in C^{p-1,1}$ we need
 $\rC$-uniform control of the $C^{p,1}$ norm of $r(\rC;\cdot)$. Similarly,
by the definition of $\refs{n}_{\rm PML}(\rC;\refs{\Bx})$ in terms of $\refs{n}(r;\refs{\Bx})$ \eqref{eq:nt}, 
for $\refs{n}_{\rm PML}(\rC;\refs{\Bx})\in C^{p-2,1}$ 
 we need  $\rC$-uniform control of the $C^{p-1,1}$ norm of $r(\rC;\cdot)$.
We therefore consider the following 
subsets of $\cA$ \eqref{eq:Anset}: for $p=1,2,3,...$, define
\footnote{For simplicity we make the constant on the right-hand side of the bound on the
  $C^{p,1}$ norms in \eqref{eq:Hh_gamma} $C_{\Re}$, but in principle this could be a
  different constant.
  }
  \begin{gather}
    \label{eq:Hh_gamma}
    \cA_p  = \cA_{p}(k;C_{\Re},C_{\Im}) 
    :=
    \big\{ \rC \in \cA(k;C_{\Re},C_{\Im}) \, :\,    
 \N{\Re \rC}_{C^{p, 1}(\bbS^{d-1})}\leq C_{\Re} \big\}, \; p\in \mathbb{N}.
\end{gather}
\begin{theorem}[$k$-explicit quasioptimality of $h$-FEM, uniform for $\rC \in \cA_{p}$]
\label{thm:FEM1}
Given $n_i<1$, $d=2$ or $3$, $C_{\Re}>0$, $p \in \mathbb{N}$,  
suppose that 
\begin{itemize}
\item both the PML scaling function
  $\widetilde{\sigma}$ and $\partial \Omega_{\rm tr}$ are $C^{p,1}$ regular, 
\item $k_1$ is as in Theorem \ref{thm:inherit},
\item $(V_h)_{h>0}$ is as in Assumption~\ref{ass:spaces}.
\end{itemize} 
Then there exists $C_{\rm FEM, j}, j=1,2,3, 4$ 
such that, 
for all $\rC \in \cA_p$ (as defined in \eqref{eq:Hh_gamma}), 
for all $\refs{L}\in (H^1_\wn (\Omega_{\rm tr}))^*$, and 
for all $k\geq k_1$, if $h$ is such that
\begin{gather}
  \label{eq:FEMthreshold}
{(hk)^{2p}} k \leq C_{\rm FEM, 1},
\end{gather}
then the solution $\refs{u}_{\rm PML}(\rC;\cdot)_h$ of \eqref{eq:FEM} exists, is unique,
and satisfies
\begin{gather}
  \label{eq:FEMresult1} \big\| \refs{u}_{\rm PML}(\rC;\cdot)
-\refs{u}_{\rm PML}(\rC;\cdot)_h \big\|_{H^1_k(\Omega_{\rm tr})} \leq C_{\rm FEM, 2}
{\Big(1 + (hk)^p k\Big)}
\min_{\refs{v}_h \in V_h} \big\| \refs{u}_{\rm PML}(\rC;\cdot) - \refs{v}_h
\big\|_{H^1_k(\Omega_{\rm tr})}
\end{gather}
and
\begin{gather}
  \label{eq:FEMresult2}
  \big\|
\refs{u}_{\rm PML}(\rC;\cdot) -\refs{u}_{\rm PML}(\rC;\cdot)_h \big\|_{L^2(\Omega_{\rm
    tr})} \leq C_{\rm FEM,3}{\Big( hk + (hk)^p k\Big)}
\min_{\refs{v}_h \in V_h} \big\| \refs{u}_{\rm PML}(\rC;\cdot) - \refs{v}_h
\big\|_{H^1_k(\Omega_{\rm tr})}.
 \end{gather} 
 Suppose, in addition, that 
 \begin{itemize}
 \item 
   $\refs{L}(\refs{v}) = \int_{\Omega_{\rm tr}}\refs{f}\overline{\refs{v}}\rd \refs{\Bx}$ with
   $\refs{f}\in {H^p}(\Omega_{\rm tr})$ that satisfies for some $C_{f}>0$
   \begin{gather}
     \label{eq:koscillatory}
     |\refs{f}|_{H^{\ell}(\Omega_{\rm tr})} \leq C_{f} k^\ell \|\refs{f}\|_{L^2(\Omega_{\rm tr})}
     \,\,
     \text{ for all } \,\,\ell=0,\ldots,p-1 \,\,\text{ and } \,\, k\geq k_1. 
   \end{gather}
\end{itemize}
Then there is a constant $C_{\rm FEM, 4}>0$ (depending on $C_{f}$)
such that, for all $\rC\in \cA_p$, 
\begin{gather}\label{eq:FEMresult3} 
\big\| \refs{u}_{\rm PML}(\rC;\cdot) -\refs{u}_{\rm PML}(\rC;\cdot)_h \big\|_{H^1_k(\Omega_{\rm tr})} 
\leq 
C_{\rm FEM, 4} \Big( 1 + (hk)^p k\Big)(hk)^p  
               \big\| \refs{u}_{\rm PML}(\rC;\cdot)\big\|_{H^1_k(\Omega_{\rm tr})}
\end{gather} 
and 
\begin{gather}\label{eq:FEMresult4} 
\big\| \refs{u}_{\rm PML}(\rC;\cdot) -\refs{u}_{\rm PML}(\rC;\cdot)_h \big\|_{L^2(\Omega_{\rm tr})} 
\leq
C_{\rm FEM, 4} \Big( hk + (hk)^p k\Big)(hk)^p  \big\| \refs{u}_{\rm PML}(\rC;\cdot)\big\|_{H^1_k(\Omega_{\rm tr})}.
\end{gather} 
\end{theorem}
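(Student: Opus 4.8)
The plan is to derive Theorem~\ref{thm:FEM1} from the $k$-explicit $h$-FEM convergence theory for PML-truncated Helmholtz problems in \cite{GaLaSp:21a, GS3}. Those references already supply bounds of the form \eqref{eq:FEMresult1}--\eqref{eq:FEMresult4} for each fixed $\rC$, so the real content here is to check that every constant can be chosen uniformly over $\rC\in\cA_p$; this mirrors the uniformity arguments already carried out for Theorems~\ref{thm:inherit} and~\ref{thm:GLS}.

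First I would recall the skeleton of the quasi-optimality (Schatz/duality) argument, in order to isolate the quantities that must be controlled uniformly. Writing $e:=\refs{u}_{\rm PML}(\rC;\cdot)-\refs{u}_{\rm PML}(\rC;\cdot)_h$, Galerkin orthogonality, continuity, and the G\aa rding inequality of Corollary~\ref{cor:PMLGarding} give
\[
c_2\N{e}^2_{H^1_k(\Omega_{\rm tr})}\leq c_5\N{e}_{H^1_k(\Omega_{\rm tr})}\min_{v_h\in V_h}\N{\refs{u}_{\rm PML}(\rC;\cdot)-v_h}_{H^1_k(\Omega_{\rm tr})}+c_6\N{e}^2_{L^2(\Omega_{\rm tr})},
\]
so the $L^2$-term must be absorbed. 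This is done through the adjoint problem, whose sesquilinear form corresponds (after conjugation) to the parameter $\overline{\rC}$, which again lies in $\cA_p$ and carries the incoming radiation condition; hence the adjoint inherits the same continuity, G\aa rding, and $k$-explicit solvability properties as the forward problem. The resulting adjoint-approximability estimate — obtained by a Melenk--Sauter-type splitting of the solution operator into a polynomially-bounded finite-regularity piece and an oscillatory analytic piece — is precisely what produces the threshold $(hk)^{2p}k\leq C_{\rm FEM,1}$ in \eqref{eq:FEMthreshold}; once this threshold holds the $L^2$-term is absorbed and \eqref{eq:FEMresult1}, \eqref{eq:FEMresult2} follow, with existence and uniqueness of $\refs{u}_{\rm PML}(\rC;\cdot)_h$ a consequence of the ensuing discrete stability (an injective square system is invertible).

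The decisive step is then to verify $\rC$-uniformity of each ingredient: (a) the continuity and G\aa rding constants are $\rC$-independent by Corollary~\ref{cor:PMLGarding}; (b) the $k$-explicit solution-operator bound feeding the splitting is $\rC$-uniform by Theorem~\ref{thm:inherit}, and, by the conjugation remark above, so is its adjoint counterpart; (c) the approximation property \eqref{eq:pp_approx} is a property of $V_h$ alone; and (d) the elliptic regularity shift that lifts the $H^1_k$ bound to the $H^{p+1}$-type bound required by both the splitting and the rate estimates holds with a single $\rC$-uniform constant. Ingredient (d) is exactly where the definition of $\cA_p$ in \eqref{eq:Hh_gamma} enters: the $C^{p,1}(\bbS^{d-1})$-bound on $\Re\rC$ propagates through $\Phibf(\rC;\cdot)$ (via the Jacobian formulae \eqref{eq:DPhi}, \eqref{eq:Dform}) to yield $C^{p-1,1}$- and $C^{p-2,1}$-bounds on $\refs{\VA}_{\rm PML}(\rC;\cdot)$ and $\refs{n}_{\rm PML}(\rC;\cdot)$ that are uniform over $\cA_p$ — the higher-order analogue of Lemma~\ref{lem:continuity} — so that the regularity-shift result (Lemma~\ref{lem:shift}) applies with one constant.

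Finally, for the rate estimates \eqref{eq:FEMresult3}/\eqref{eq:FEMresult4} I would take $v_h=\mathcal{I}_{h,p}\refs{u}_{\rm PML}(\rC;\cdot)$ in \eqref{eq:FEMresult1}/\eqref{eq:FEMresult2}, bound the best-approximation term by $C(hk)^p\N{\refs{u}_{\rm PML}(\rC;\cdot)}_{H^{p+1}_k(\Omega_{\rm tr})}$ using \eqref{eq:pp_approx} in the $k$-weighted norms, and then invoke the regularity shift together with the $k$-oscillatory data hypothesis \eqref{eq:koscillatory} to replace $\N{\refs{u}_{\rm PML}(\rC;\cdot)}_{H^{p+1}_k}$ by $C\N{\refs{u}_{\rm PML}(\rC;\cdot)}_{H^1_k}$ with a $\rC$-uniform constant. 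I expect the main obstacle to be ingredient (d): establishing that the coefficient-smoothness bounds and the consequent regularity shift are genuinely uniform across the infinite-dimensional set $\cA_p$ (rather than merely holding for each fixed $\rC$); once these are in hand, the remaining $k$-power bookkeeping is identical to that in \cite{GaLaSp:21a, GS3}.
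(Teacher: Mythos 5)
Your proposal is correct and follows essentially the same route as the paper: reduce to the framework of \cite{GS3} and verify that its two hypotheses --- continuity plus a G\aa rding inequality (Corollary~\ref{cor:PMLGarding}) and the elliptic-regularity shift in $k$-weighted norms (Lemma~\ref{lem:shift}, with the coefficient smoothness propagated from the $C^{p,1}$ bound on $\Re\rC$ in $\cA_p$) --- hold with constants uniform over $\rC\in\cA_p$. The only difference is presentational: you unpack the Schatz/duality and splitting machinery internal to \cite{GS3}, which the paper invokes as a black box.
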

    
\begin{remark}[Interpreting Theorem \ref{thm:FEM1} and the mesh threshold \eqref{eq:FEMthreshold}]  \label{rem:FEM1}
Theorem \ref{thm:FEM1} implies that
\begin{enumerate}
\item If $(hk)^p k$ is sufficiently small, then the Galerkin solutions are quasioptimal,
  with constant in independent of $k$, as $k\to\infty$ (by \eqref{eq:FEMresult1}).
\item If $(hk)^{2p}k$ is sufficiently small and the data is ``$k$-oscillatory'' in the
  sense of \eqref{eq:koscillatory}, then the relative $H^1_k$ error is controllably small,
  uniformly in $k$, as $k\to \infty$ (by \eqref{eq:FEMresult3}).
\end{enumerate}
These thresholds are observed empirically to be sharp (going back to the work of Ihlenburg
and Babu\v{s}ka in 1-d \cite{IhBa:95a, IhBa:97}); see the discussion in \cite[\S1.3]{GS3}
and the references therein.

In the limit $h\to 0$ for fixed $k$, the bounds \eqref{eq:FEMresult3} and \eqref{eq:FEMresult4} recover the well-known
results that, for $f\in H^{p-1}$, the Galerkin error in $H^1$ is $O(h^p)$
\eqref{eq:FEMresult3}, and the Galerkin error in $L^2$ is $O(h^{p+1})$
\eqref{eq:FEMresult4}.
\end{remark} 
  
\esc{The result of Theorem \ref{thm:FEM1} with the constants $C_{\rm FEM, j}, j=1,2,3, 4$, depending on $\rC \in \mathcal{A}$ is a direct consequence of the main result of \cite{GS3}. The assumptions in \cite{GS3} are that (i) the sesquilinear form is continuous and satisfies a G\aa rding inequality, and (ii) the highest-order terms in the underlying PDE satisfy the natural elliptic-regularity shift, and the constants in the FEM-error bounds in \cite{GS3} then only depend on the constants in (i) and (ii). 
To prove Theorem \ref{thm:FEM1} therefore, we need to show that the properties (i) and (ii) hold with constants uniform in $\rC\in \mathcal{A}_p$. For (i), this property follows immediately from Corollary \ref{cor:PMLGarding}. We therefore now focus on (ii). 
}
  
\begin{lemma}[Elliptic regularity shift property, uniform for $\rC \in \cA_p$]
\label{lem:shift}
  Suppose that $\widetilde{\sigma}$ and $\partial \Omega_{\rm tr}$ are both $C^{p,1}$, 
  $p\in \mathbb{N}$.
  Given $d=2$ or $3$ and $C_{\Re}>0$, let $C_{\Im}$ be as in Corollary \ref{cor:A}.  Then
  there exists $C>0$ such that for all
  $\rC \in \cA_p$ and $j=1,\ldots,p+1$, 
given $\refs{f}\in H^{j-2}(\Omega_{\rm tr})$,
    the solution of
  \beqs - \refs{\nablabf} \cdot\big(\refs{\VA}_{\rm
    PML}(\rC;\refs{\Bx}) \refs{\nablabf}\refs{w}(\rC;\refs{\Bx})\big) =
  \refs{f}(\refs{\Bx}) \quad \text{ for }\refs{\Bx} \in \Omega_{\rm tr} \quad\text{ and
  }\quad \refs{w}(\refs{\Bx}) = 0 \quad\text{ for } \refs{\Bx}\in\partial\Omega_{\rm tr}
  \eeqs
  satisfies
  \beq
\label{eq:shift0}
| \refs{w} |_{H^j(\Omega_{\rm tr})} \leq C \big\|\refs{f}\big\|_{H^{j-2}(\Omega_{\rm tr})}.
\eeq
Furthermore, the analogous bound hold when  $\refs{\VA}_{\rm
    PML}$ is replaced by either $\overline{(\refs{\VA}_{\rm
    PML})^T}$ or $\Re (\refs{\VA}_{\rm
    PML}):= (\refs{\VA}_{\rm
    PML}+\overline{(\refs{\VA}_{\rm
    PML})^T})/2$. 
\end{lemma}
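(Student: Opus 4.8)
The statement is a classical interior-plus-boundary elliptic regularity shift for the strongly elliptic, divergence-form operator $\refs{w}\mapsto -\refs{\nablabf}\cdot\big(\refs{\VA}_{\rm PML}(\rC;\cdot)\,\refs{\nablabf}\refs{w}\big)$ with homogeneous Dirichlet data; the only genuinely new content is the uniformity of the constant $C$ over the parameter set $\cA_p$. The plan is therefore to reduce \eqref{eq:shift0} to a textbook shift theorem for second-order divergence-form elliptic equations (for which, for each $j$, the constant depends only on the $C^{j-1,1}$-regularity of $\partial\Omega_{\rm tr}$, on the ellipticity constant, and on the $C^{j-2,1}$-norm of the coefficient; see the references underlying \cite{Mc:00}), and then to check that all three of these quantities are controlled uniformly for $\rC\in\cA_p$.

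Two uniform ingredients are needed. First, uniform strong ellipticity: Lemma \ref{lem:signPML} gives $\Re\big(\refs{\VA}_{\rm PML}(\rC;\refs{\Bx})\xi,\xi\big)_2\geq C\|\xi\|_2^2$ with $C$ independent of $\rC\in\cA\supset\cA_p$, which is exactly pointwise coercivity of the symmetric real part and furnishes a $\rC$-uniform ellipticity constant. Second, a uniform coefficient bound: I would show $\N{\refs{\VA}_{\rm PML}(\rC;\cdot)}_{C^{p-1,1}(\Omega_{\rm tr})}\leq C$ for all $\rC\in\cA_p$. Here I split $\Omega_{\rm tr}$ at $|\refs{\Bx}|=R_1$. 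On the PML region $|\refs{\Bx}|>R_1$ the coefficient $\VH\VD\VH^T$ depends only on the fixed, $C^{p,1}$ datum $\widetilde{\sigma}$ (through $\sigma\in C^{p-1,1}$ and $\alpha,\beta$, which are bounded away from zero) and on the angular variables, so its $C^{p-1,1}$-norm is a fixed constant independent of $\rC$. On the interior $|\refs{\Bx}|\leq R_1$ the coefficient is $\refs{\VA}(\rC;\cdot)$, and using the rational expression \eqref{eq:At_new} together with the Jacobian formula \eqref{eq:DPhi}, the lower bound \eqref{eq:bdd2} on $|\det\Dervxt\Phibf|$, and the product/quotient/chain rules, every derivative of $\refs{\VA}$ up to order $p$ is a polynomial expression in $\chi$ and its (fixed, smooth) derivatives, in $(\det\Dervxt\Phibf)^{-1}$, and in derivatives of $\rC$ of order at most $p$; the $C^{p,1}$ control of $\rC$ built into the definition \eqref{eq:Hh_gamma} of $\cA_p$ then bounds this expression uniformly. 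Finally I would note that the two pieces glue to a globally $C^{p-1,1}$ coefficient: since $R_1>2>2-\lambda$, property \eqref{chi:2} forces $\chi\equiv0$ near $R_1$, so $\refs{\VA}\equiv\VI$ on an interior annulus, while $\widetilde{\sigma}$ vanishes to order $p$ at $R_1^+$, so the exterior piece equals $\VI$ to the same order there.

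With these two uniform ingredients, the shift theorem applies for each $j=1,\ldots,p+1$: the case $j=1$ is the energy estimate (coercivity on $H^1_0(\Omega_{\rm tr})$ via Lemma \ref{lem:signPML} together with the Poincar\'e inequality for the homogeneous Dirichlet condition), and the cases $j\geq 2$ follow from the standard shift, whose constant depends only on the uniform quantities established above. The three variants are handled simultaneously, since for real $\xi$ one has $\Re\big(\overline{(\refs{\VA}_{\rm PML})^T}\,\xi,\xi\big)_2=\Re\big(\Re(\refs{\VA}_{\rm PML})\,\xi,\xi\big)_2=\Re\big(\refs{\VA}_{\rm PML}\,\xi,\xi\big)_2$; conjugation, transposition, and passage to the Hermitian part thus leave the ellipticity constant of Lemma \ref{lem:signPML} unchanged and do not increase $C^{p-1,1}$-norms, so the same theorem yields \eqref{eq:shift0} for each.

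I expect the main obstacle to be the uniform coefficient bound, i.e.\ controlling $\N{\refs{\VA}(\rC;\cdot)}_{C^{p-1,1}}$ over $\cA_p$: one must verify carefully that differentiating the rational matrix expression \eqref{eq:At_new} up to order $p$ never calls for more than $p$ derivatives of $\rC$, so that the $C^{p,1}$ bound in \eqref{eq:Hh_gamma} is exactly what is required (this is the same bookkeeping that motivated the choice $p=\gamma+1$ in the discussion preceding the lemma). A secondary point to verify is the $C^{p-1,1}$ matching of the coefficient across the PML interface $|\refs{\Bx}|=R_1$, which is precisely where the vanishing of $\chi$ and of $\widetilde{\sigma}$ to the appropriate order is used.
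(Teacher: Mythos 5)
Your proposal is correct and follows essentially the same route as the paper: invoke the standard elliptic-regularity shift (e.g.\ \cite[Theorem 4.18]{Mc:00}), obtain the $\rC$-uniform ellipticity constant from Lemma~\ref{lem:signPML}, and obtain the $\rC$-uniform $C^{p-1,1}$ bound on $\refs{\VA}_{\rm PML}$ by splitting at $|\refs{\Bx}|=R_1$ and arguing as in Lemma~\ref{lem:continuity} on the interior piece (where the paper writes the first step as the intermediate estimate \eqref{eq:shift1} plus Lax--Milgram). Your explicit check of the $C^{p-1,1}$ matching across $|\refs{\Bx}|=R_1$ and of the derivative bookkeeping for $\refs{\VA}(\rC;\cdot)$ fills in details the paper omits, but does not change the argument.
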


\bpf We first claim that it is sufficient to prove that 
\beq
\label{eq:shift1}
| \refs{w}
|_{H^{j}(\Omega_{\rm tr})} \leq C' \big(\N{\refs{w}}_{H^1(\Omega_{\rm tr})} +
\big\|\refs{f}\big\|_{H^{j-2}(\Omega_{\rm tr})}\big)
\eeq 
for some $C'$ independent of
$\rC$. Indeed, Lemma \ref{lem:signPML}, Part (i) of Lemma \ref{lem:continuity} and the
Lax--Milgram theorem imply that
$\|\refs{w}\|_{H^1(\Omega_{\rm tr})} \leq C'' \|\refs{f}\|_{L^2(\Omega_{\rm tr})}$ with $C''$
independent of $\rC$, and combining this with \eqref{eq:shift1} yields \eqref{eq:shift0}.

By \esc{the standard elliptic-regularity shift result (see}, e.g., \cite[Theorem 4.18]{Mc:00}) the bound \eqref{eq:shift1} holds if 
(i) \eqref{eq:signPML} holds,
(ii) $\partial \Omega_{\rm tr}$ is $C^{p,1}$, and 
  (iii) $\refs{\VA}_{\rm PML}\in C^{p-1,1}(\Omega_{\rm tr})$ with norm bounded independently of $\rC$.
The property (i) holds by Lemma \ref{lem:signPML} and the property (ii) holds by assumption.
For the property (iii), the fact that $\widetilde{\sigma}\in C^{p,1}$
 implies that $\refs{\VA}_{\rm PML}$ 
(defined by \eqref{eq:defAPML}) restricted to the PML region $|\refs{\Bx}|\geq R_1$ is $C^{p-1,1}$. 
Arguing as in Lemma \ref{lem:continuity}, we have that if $\rC\in \mathcal{A}_p$, then 
$\refs{\VA}_{\rm PML}$ restricted to $|\refs{\Bx}|\leq R_1$ is in $C^{p,1}$ with norm bounded independently of $\rC$.
\epf  
  
  \esc{With Corollary \ref{cor:PMLGarding} and Lemma \ref{lem:shift} in hand, we can now prove Theorem \ref{thm:FEM1}.}  
  
\bpf[Proof of Theorem \ref{thm:FEM1}] 
We obtain the bounds \eqref{eq:FEMresult1},
\eqref{eq:FEMresult2}, and \eqref{eq:FEMresult3} under the mesh threshold
\eqref{eq:FEMthreshold} by showing that the PML problem \eqref{eq:PMLvf} and its Galerkin
discretisation \eqref{eq:FEM} fit into the class of problems considered by \cite{GS3}.
Analytic dependence of the Galerkin solution on $\rC$ then follows by noting that, by
\eqref{eq:FEMresult1} and the triangle inequality, $\refs{u}_{\rm PML}(\rC;\cdot)_h$
satisfies the same bound in terms of the data as $\refs{u}_{\rm PML}(\rC;\cdot)$ (i.e.,
the bound \eqref{eq:inherit1}) and then arguing as in the proof of Corollary \ref{cor:A}
using the analytic implicit function theorem.

The paper \cite{GS3} proves the bounds \eqref{eq:FEMresult1}-\eqref{eq:FEMresult4} 
for Helmholtz problems whose sesquilinear forms are continuous, 
satisfy a G\aa rding inequality, and satisfy elliptic regularity. 
We now show how Corollary \ref{cor:PMLGarding} and Lemma \ref{lem:shift} imply that
the variational problem \eqref{eq:PMLvf} fits into this framework, 
with constants independent of $\rC$ for $\rC \in \cA_{p}$.

Corollary \ref{cor:PMLGarding} immediately shows that \cite[Equations 1.6 and 1.7]{GS3} are satisfied (i.e., the sesquilinear form is continuous and satisfies a G\aa rding inequality).
For the elliptic regularity assumptions of \cite[Assumptions 1.2 and 1.6]{GS3}, we need that there exists $C>0$ such that, for $j=1,\ldots,p+1$ and $\rC \in \cA_{p}$,
\beq\label{eq:needSunday1}
\N{\refs{w}}_{H^j_k(\Omega_{\rm tr})} \leq C \Big( \N{\refs{w}}_{L^2} + \big\| k^{-2} \nabla \cdot (\refs{\VA}_{\rm
    PML}
 \nabla \refs{w} ) + \refs{n}_{\rm PML} \refs{w} \big\|_{H^{j-2}_k(\Omega_{\rm tr})} \Big),
\eeq
as well as the analogous bound with $\refs{\VA}_{\rm
    PML}$ replaced by either $\overline{(\refs{\VA}_{\rm
    PML})^T}$ or $\Re (\refs{\VA}_{\rm
    PML}):= (\refs{\VA}_{\rm
    PML}+\overline{(\refs{\VA}_{\rm
    PML})^T})/2$. In \eqref{eq:needSunday1} $H^j_k$ is the usual $H^j$ norm, but with each derivative weighted by $k^{-1}$, as in \eqref{eq:weighted_norms}, \eqref{eq:weighted_H2}.
By multiplying \eqref{eq:shift0} by $k^{-j}$, we obtain that there exists $C>0$ such that, for $j=1,\ldots,p+1$ and $\rC \in \cA_{p}$,
\beqs
\N{\refs{w}}_{H^j_k(\Omega_{\rm tr})} \leq C \big\| k^{-2} \nabla \cdot (\refs{\VA}_{\rm
    PML} \nabla \refs{w} )  \big\|_{H^{j-2}_k(\Omega_{\rm tr})}.
\eeqs
Now, since $\refs{n}_{\rm PML}  \in C^{p-1}$ with norm bounded independently of $\rC \in \cA_{p}$, 
by \esc{a classic result about the Sobolev norm of a product (see}, e.g., \cite[Theorem 1.4.1.1, page 21]{Gr:85}) 
there exists $C>0$ such that 
\beq\label{eq:nboundmult}
\|  \refs{n}_{\rm PML}\refs{w}\|_{H^{j-2}_k(\Omega_{\rm tr})} \leq C \|\refs{w}\|_{H^{j-2}_k(\Omega_{\rm tr})}  \quad\text{ for } j=1,\ldots,p+1\, \text{ and } \, \rC \in \cA_{p}.
\eeq
 Therefore, for $j=1,\ldots,p+1$ and $\rC \in \cA_{p}$,
\beqs
\N{\refs{w}}_{H^j_k(\Omega_{\rm tr})} \leq C \Big(  \N{\refs{w}}_{H^{j-2}_k(\Omega_{\rm tr})}+\big\| k^{-2} \nabla \cdot (\refs{\VA}_{\rm
    PML} \nabla \refs{w} ) + \refs{n}_{\rm PML}  \refs{w}  \big\|_{H^{j-2}_k(\Omega_{\rm tr})}  \Big),
\eeqs
and the required bound \eqref{eq:needSunday1} follows by induction. 

Assumption \ref{ass:spaces} is \cite[Assumption 4.8]{GS3}, and then \eqref{eq:FEMresult1}-\eqref{eq:FEMresult3} follow from \cite[Equations 4.16, 4.17, and 4.19]{GS3}. The bound \eqref{eq:FEMresult4} is not stated explicitly in \cite{GS3}, but follows from the displayed equation before \cite[Remark 2.3]{GS3} (one repeats the arguments that obtain \eqref{eq:FEMresult3} from \eqref{eq:FEMresult1}, but now one starts from \eqref{eq:FEMresult2}).
\epf  

\section{Quantity of Interest (QoI): Far-Field Pattern}
\label{sec:faf}

\subsection{Definition of and expressions for the far-field pattern}

If $v$ is a solution of the Helmholtz equation $(-k^{-2}\Delta -1)v=0$ outside $B_{R_0}$
for some $R_0>0$ and $v$ satisfies the Sommerfeld radiation condition \eqref{eq:src}
(i.e., $v$ is \emph{outgoing}), then the \emph{far-field pattern} of $v$,
$v_\infty:\mathbb{S}^{d-1}\to \mathbb{C}$, is defined by
\beq\label{eq:ff_def}
v_\infty (\unitx) = \lim_{ \rho \to \infty} \Big(
\rho^{(d-1)/2} \exp(- \ri k \rho)\, v(\rho\unitx) \Big)\;,\quad \unitx \in \bbS^{d-1}
\eeq
(this limit exists and is a smooth function of $\unitx$ by, e.g., \cite[Corollary 3.7]{CoKr:83}).

The quantity of interest for our scattering problem \eqref{eq:htp} is the far field
pattern $u_{\infty}:\bbS^{d-1}\to\bbC$ of the scattered wave $\uscat:=u-\uinc$.

We now give an expression for the far-field pattern of an outgoing Helmholtz solution from 
\cite[Theorem 2.2]{Mo:95} (written in a slightly-more general way here).

\begin{theorem}[Expression for far-field pattern as integral over subset of domain]\label{thm:Monk}
  Suppose that, for some $R_0>0$, $v\in C^2(\Rea^d\setminus B_{R_0})$ satisfies the
  Helmholtz equation $(-k^{-2}\Delta - 1)v=0$ in $\Rea^d\setminus \overline{B_{R_0}}$ and
  the Sommerfeld radiation condition \eqref{eq:src}.  
  Let $\psi \in C^2(\Rea^d;[0,1])$ be such that $\psi \equiv 0$ 
  in a neighbourhood of $B_{R_0}$ and $\psi \equiv 1$ on
  $(B_{R_1})^c$ for some $R_1>R_0$.  
  
  Then 
  \beq\label{eq:Monk}
  \boxed{v_\infty(\unitx) = C(d,k)
  \int_{\supp \nabla\psi} v(\By) \Big( \Delta \psi(\By) - 2 \ri k\,
  \unitx\cdot\nabla\psi(\By) \Big) \exp\big(-\ri k \By \cdot \unitx\big) \, {\rm
    d} \By}\;, 
\eeq 
where \cs{$\unitx \in \bbS^{d-1}$ and}
\beq \label{eq:Cdk} 
      C(2,k) =\frac{1}{\sqrt{k} } \frac{\re^{\ri \pi/4}}{2\sqrt{2\pi}} \quad\text{ and }\quad C(3,k) = \frac{1}{4\pi}.  \eeq
\end{theorem}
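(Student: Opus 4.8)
The plan is to reduce the statement to the far-field asymptotics of the outgoing volume potential. First I would introduce the globally defined function $w := \psi v$. Because $\psi \equiv 0$ in a neighbourhood of $B_{R_0}$ and $v\in C^2(\Rea^d\setminus B_{R_0})$, the product $w$ extends by zero to a function in $C^2(\Rea^d)$; because $\psi\equiv 1$ on $(B_{R_1})^c$, $w$ coincides with $v$ outside $B_{R_1}$, is therefore outgoing, and shares the far-field pattern of $v$, i.e.\ $v_\infty = w_\infty$. A direct computation using the product rule together with $(\Delta + k^2)v = 0$ on $\supp\psi$ (recall the PDE $(-k^{-2}\Delta-1)v=0$ is $(\Delta+k^2)v=0$) gives $(\Delta + k^2)w = \phi$, where $\phi := (\Delta\psi)\,v + 2\nabla\psi\cdot\nabla v$ is continuous and supported in the compact set $\supp\nabla\psi\subset \overline{B_{R_1}}\setminus B_{R_0}$.

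Next I would represent $w$ through the outgoing fundamental solution $\Phi_k$ of the Helmholtz operator, normalised by $(\Delta + k^2)\Phi_k = -\delta$, so that $\Phi_k(\Bx) = \exp(\ri k|\Bx|)/(4\pi|\Bx|)$ for $d=3$ and $\Phi_k(\Bx) = \tfrac{\ri}{4}H_0^{(1)}(k|\Bx|)$ for $d=2$. Since $\phi$ is compactly supported and $w$ is outgoing, uniqueness of the outgoing solution yields the volume-potential representation $w(\Bx) = -\int_{\supp\nabla\psi}\Phi_k(\Bx-\By)\phi(\By)\,\mathrm{d}\By$. To extract the far field I would insert the large-argument expansion $|\Bx-\By| = |\Bx| - \unitx\cdot\By + O(|\Bx|^{-1})$ for $\Bx = \rho\unitx$, $\rho\to\infty$, with $\By$ in a fixed compact set, giving $\Phi_k(\rho\unitx - \By) = C(d,k)\,\rho^{-(d-1)/2}\exp(\ri k\rho)\exp(-\ri k\,\unitx\cdot\By)\bigl(1 + O(\rho^{-1})\bigr)$ uniformly in $\By$. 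The constant $C(d,k)$ is read off from the leading asymptotics of $\Phi_k$: directly in $d=3$, and in $d=2$ via $H_0^{(1)}(t)\sim\sqrt{2/(\pi t)}\exp(\ri(t-\pi/4))$, the latter producing $\tfrac{\ri}{4}\sqrt{2/(\pi k)}\exp(-\ri\pi/4) = \exp(\ri\pi/4)/(2\sqrt{2\pi k})$, which are exactly the values in \eqref{eq:Cdk}. Multiplying by $\rho^{(d-1)/2}\exp(-\ri k\rho)$ and passing to the limit under the integral sign then yields $w_\infty(\unitx) = -C(d,k)\int \phi(\By)\exp(-\ri k\,\unitx\cdot\By)\,\mathrm{d}\By$.

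The final step removes $\nabla v$ from $\phi$ by integration by parts. In $\int 2(\nabla\psi\cdot\nabla v)\exp(-\ri k\,\unitx\cdot\By)\,\mathrm{d}\By$ I would move the derivative off $v$; there are no boundary terms because the vector field $2\nabla\psi\exp(-\ri k\,\unitx\cdot\By)$ is compactly supported, so this equals $-\int v\,\nabla\cdot\bigl(2\nabla\psi\exp(-\ri k\,\unitx\cdot\By)\bigr)\mathrm{d}\By = -\int v\,[2\Delta\psi - 2\ri k\,\unitx\cdot\nabla\psi]\exp(-\ri k\,\unitx\cdot\By)\,\mathrm{d}\By$. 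Substituting this into the formula for $w_\infty$ and collecting the $\Delta\psi$ contributions with the $(\Delta\psi)v$ term, everything recombines to give exactly $w_\infty(\unitx) = C(d,k)\int v(\By)[\Delta\psi(\By) - 2\ri k\,\unitx\cdot\nabla\psi(\By)]\exp(-\ri k\,\unitx\cdot\By)\,\mathrm{d}\By$, which is \eqref{eq:Monk} since $v_\infty = w_\infty$.

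The main obstacle is the analytic justification of the second step: establishing the outgoing volume-potential representation for $w$ (equivalently, uniqueness of outgoing solutions of the inhomogeneous Helmholtz equation, which rests on Rellich's lemma) and rigorously interchanging the far-field limit with the integral. The latter is controlled by the expansion of $\Phi_k(\rho\unitx-\By)$ being uniform on the compact support of $\phi$, so dominated convergence applies. Everything else — the product rule, the $|\Bx-\By|$ expansion, and the integration by parts — is routine, and the hypotheses $v\in C^2$ and $\psi\in C^2$ are precisely what is needed to make $\phi$ continuous and the integration by parts legitimate.
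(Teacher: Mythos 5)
Your proposal is correct and follows essentially the same route as the paper's proof: form $w=\psi v$, compute $(\Delta+k^2)w = 2\nabla\psi\cdot\nabla v + v\,\Delta\psi$, represent $w$ via the outgoing fundamental solution, extract the far field from the large-argument asymptotics (including the $H_0^{(1)}$ expansion for $d=2$), and integrate by parts to move the derivative off $v$. The only cosmetic difference is that you justify the volume-potential representation by uniqueness of outgoing solutions, whereas the paper applies Green's integral representation on $B_R$ and lets $R\to\infty$ so that the boundary term vanishes by the radiation condition --- the same mechanism in different packaging.
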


Since the proof is relatively short, we give it here.

\bpf[Proof of Theorem \ref{thm:Monk}]
Let 
\beqs
G(\Bx,\By):= \frac{\ri}{4} H_0^{(1)}(k |\Bx-\By|) \,\,\text{ for } d=2, \quad \text{ and
}\quad G(\Bx,\By) := \frac{\exp(\ri k |\Bx-\By|)}{4\pi |\Bx-\By|} \,\,\text{ for } d=3,
\eeqs
and recall that $(\Delta_{\By} + k^2) G(\Bx,\By) = - \delta (\Bx-\By)$.
Green's integral representation states that if $w\in C^2(\overline{B_R})$ then, for $\Bx\in B_R$,
\beqs
w(\Bx) = \int_{\partial B_R}\left( \frac{\partial w}{\partial \nu}(\By) G(\Bx,\By) - w(\By) \frac{\partial G(\Bx,\By)}{\partial \nu(\By)}\right) \rd S(\By) - \int_{B_R} G(\Bx,\By) (\Delta + k^2)w(\By) \rd \By,
\eeqs
where $\nu(\By):= \By/|\By|$; see, e.g., \cite[Theorem 3.1]{CoKr:83}. Let $w= v\psi$ and observe that  $w\in C^2(\overline{B_R})$
since $\psi\equiv 0$ on a neighbourhood of $B_{R_0}$.
With this choice of $w$, as $R\to \infty$, the integral over $\partial B_R$ tends to zero, since both $v$ and $G(\Bx,\cdot)$ (for $\Bx$ fixed) satisfy the Sommerfeld radiation condition (see, e.g., \cite[Last equation in the proof of Theorem 3.3]{CoKr:83}). Now,
\beqs
(\Delta +k^2) w = (\Delta +k^2)(\psi v) = 2 \nabla\psi \cdot\nabla v + v \Delta \psi + \psi (\Delta +k^2)v 
= 2 \nabla\psi \cdot\nabla v + v \Delta \psi,
\eeqs
since $(\Delta +k^2)v=0$ on $\supp \psi$;
therefore, if $|\Bx|> R_1$, then
\beqs
v(\Bx) = -\int_{\supp \nabla\psi} 
 G(\Bx,\By) \Big(2 \nabla\psi \cdot\nabla v + v \Delta \psi\Big)(\By)
\rd \By.
\eeqs
Letting $|\Bx|\to \infty$ and using \eqref{eq:ff_def} and (when $d=2$) the large-argument asymptotics of $H_0^{(1)}(\cdot)$ (see, e.g., \cite[Equation 10.17.5]{Di:22}), we find that
\beqs
v_\infty(\unitx) = -C(d,k) \int_{\supp \nabla\psi}
\Big(2 \nabla\psi \cdot\nabla v + v \Delta \psi\Big)(\By)
\exp\big(-\ri k \By \cdot \unitx\big)
\, {\rm d} \By.
\eeqs
The result \eqref{eq:Monk} then follows by integrating by parts (using the divergence theorem) the term involving $\nabla\psi \cdot\nabla v$ (moving the derivative from $v$ onto $\psi$).
\epf

\subsection{Formulating the solution of the plane-wave scattering problem as the outgoing solution of a Helmholtz problem with $L^2$ data}
\label{sec:planewave}

We now recall how to formulate the solution of the plane-wave scattering problem of
\eqref{eq:htp} as the outgoing solution of a Helmholtz problem with source
$f\in L^2(B_2)$, so that it can be approximated by PML truncation (with the error then given
by Theorem \ref{thm:GLS}).

One option is to solve for the scattering field $\uscat:=u- \uinc$, which satisfies the
Sommerfeld radiation condition \eqref{eq:src} (by \eqref{eq:htp2}) and
\beq\label{eq:rhs_compare1} (-k^{-2}\Delta - n)\uscat = - (1-n)\uinc; \eeq since the
right-hand side of this PDE is compactly supported in $B_2$, PML truncation can be used to
approximate $\uscat$ (with the error then controlled by Theorem \ref{thm:GLS}).

A second option is described in the following lemma.  Although this second option is more
complicated than the first, the second option has the advantage that, when $\uinc$ is an incident
plane wave, the $L^{2}$-norm of the right-hand side of the PDE behaves like $O(k^{-1})$ for
$k\to\infty$ (see \eqref{eq:pw_f} below), whereas the right-hand side of the PDE
\eqref{eq:rhs_compare1} is uniformly bounded with respect to $k$.  
Recall 
 that 
slower growth in $k$ of the right-hand side implies slower growth of the solution (by \eqref{eq:inherit1}) and thus 
stronger bounds on the finite-element solution (by \eqref{eq:FEMresult3}).

\begin{lemma}[Transmission solution formulated as an outgoing Helmholtz solution with $L^2$ data]\label{lem:L2}
Given $\uinc$, 
let $u$ be the solution of the Helmholtz transmission problem \eqref{eq:htp} 
with $n$ given by \eqref{eq:n} and $D$ as described in \S\ref{sec:pi}.
Given $\eta>0$, let $\varphi \in C^{2}_{\rm comp}(\Rea^d;[0,1])$ be such that 
\beq\label{eq:psi_pw}
\varphi \equiv 1\,\,\text{ on } B_{2-\eta} \quad\text{ and } \quad \varphi \equiv 0\,\,\text{ on }\,\,\Rea^d \setminus B_{2-\eta/2}.
\eeq
Let $\ualt$ be the outgoing solution to 
\beq\label{eq:pw_f}
(-k^{-2}\Delta - n) \ualt
=
-k^{-2}\big(2 \nabla \varphi\cdot \nabla \uinc + \uinc \Delta \varphi\big)=: f^{\rm alt}.
\eeq
Then 
\beq\label{eq_tilde_u}
\ualt =\varphi \uinc  + (u-\uinc) 
= u- (1-\varphi)\uinc,
\eeq
and thus  $\ualt\equiv u$ on $B_{2-\eta}$ and $\ualt = \uscat$ on
  $\mathbb{R}^{d}\setminus B_{2-\eta/2}$.
\end{lemma}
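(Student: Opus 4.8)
The plan is to guess the solution explicitly and then invoke uniqueness of the outgoing solution. Set $w := u - (1-\varphi)\uinc = \varphi\uinc + \uscat$, which is exactly the right-hand side of \eqref{eq_tilde_u}; it therefore suffices to show that $w$ is the outgoing solution of \eqref{eq:pw_f}, since that solution is unique (cf.\ \cite[Lemma 2.2]{MS19}). The radiation condition for $w$ is immediate from the decomposition $w = \uscat + \varphi\uinc$: the scattered field $\uscat = u-\uinc$ satisfies the Sommerfeld condition \eqref{eq:src} by \eqref{eq:htp2}, while $\varphi\uinc$ is compactly supported (as $\varphi \in C^2_{\rm comp}$) and hence does not affect the behaviour at infinity.

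First I would compute $(-k^{-2}\Delta - n)w$ term by term. From \eqref{eq:htp1} we have $(-k^{-2}\Delta - n)u = 0$, and since $\uinc$ solves the free Helmholtz equation, $-k^{-2}\Delta\uinc = \uinc$, so that $(-k^{-2}\Delta - n)\uinc = (1-n)\uinc$. The only nontrivial piece is the term $\varphi\uinc$: the product rule gives $\Delta(\varphi\uinc) = \varphi\Delta\uinc + 2\nabla\varphi\cdot\nabla\uinc + \uinc\Delta\varphi$, whence, using $-k^{-2}\Delta\uinc = \uinc$ once more,
\[
(-k^{-2}\Delta - n)(\varphi\uinc) = (1-n)\varphi\uinc - k^{-2}\big(2\nabla\varphi\cdot\nabla\uinc + \uinc\Delta\varphi\big).
\]
Collecting the three contributions yields $(-k^{-2}\Delta - n)w = (1-n)(\varphi - 1)\uinc + f^{\rm alt}$.

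The hard part---really the only point requiring care---is to show that the stray term $(1-n)(\varphi-1)\uinc$ vanishes identically. The factor $1-n$ is supported in $\overline{D}$ by \eqref{eq:n}, while $\varphi - 1$ vanishes on $B_{2-\eta}$ by \eqref{eq:psi_pw}; thus the product is zero provided $\overline{D}\subset B_{2-\eta}$. This containment follows from the geometric bound \eqref{eq:gann} (a consequence of Assumption~\ref{ass:rbd}), which places the interface $\Gamma(r)$, and hence $\overline{D(r)}$, inside $\overline{B_{4/3}}$, so one only needs $\eta < 2/3$. With this cancellation $w$ solves $(-k^{-2}\Delta - n)w = f^{\rm alt}$ and is outgoing, so uniqueness gives $\ualt = w$, which is \eqref{eq_tilde_u}.

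Finally, the two restriction identities are read off directly from the properties of $\varphi$: on $B_{2-\eta}$ we have $\varphi\equiv 1$, so $(1-\varphi)\uinc = 0$ and $\ualt = u$; on $\Rea^d\setminus B_{2-\eta/2}$ we have $\varphi\equiv 0$, so $\varphi\uinc = 0$ and $\ualt = \uscat$. I would also note in passing that $f^{\rm alt}$ is supported in the shell $\supp\nabla\varphi \subset B_{2-\eta/2}\setminus B_{2-\eta}\subset B_2$ and lies in $L^2(B_2)$, as required by the PML-truncation theory, with the announced $O(k^{-1})$ size for a plane-wave $\uinc$ coming from $\|\nabla\uinc\|_{L^\infty} = O(k)$ against the $k^{-2}$ prefactor.
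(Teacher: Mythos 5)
Your proposal is correct and follows essentially the same route as the paper: define $w=u-(1-\varphi)\uinc$, verify that it is outgoing and solves \eqref{eq:pw_f}, and conclude by uniqueness of outgoing solutions. The only cosmetic difference is that the paper replaces $n$ by $1$ on $\supp(1-\varphi)$ before expanding the Laplacian, whereas you expand first and then observe that the leftover term $(1-n)(\varphi-1)\uinc$ vanishes because $\supp(1-n)\subset\overline{D}\subset B_{2-\eta}$ — the same support argument, and your explicit remark that this needs $\eta$ small enough (e.g.\ $\eta<2/3$, automatic in the later application where $\eta<\lambda$) is a reasonable clarification of a point the paper leaves implicit.
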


\bpf
Given $\varphi$ as above, the function $u- (1-\varphi)\uinc$ satisfies both the Sommerfeld radiation condition \eqref{eq:src} and 
\begin{align}\nonumber
(-k^{-2}\Delta - n)\big(u- (1-\varphi)\uinc\big)
= -(-k^{-2}\Delta -n) (1-\varphi) \uinc&=-(-k^{-2}\Delta-1) (1-\varphi) \uinc\\
&= -k^{-2}\big(2 \nabla \varphi\cdot \nabla \uinc + \uinc \Delta \varphi\big),
\nonumber
\end{align}
where we have used that $n\equiv 1$ on $\supp(1-\varphi)$ in the second equality. By the uniqueness of outgoing solutions of the transmission problem, \eqref{eq_tilde_u} holds.
\epf



We now combine Theorem \ref{thm:Monk} and Lemma \ref{lem:L2} 
to give an expression for the far-field pattern of the scattered wave in the transmission problem \eqref{eq:htp} in terms of $\ualt$.

\begin{corollary}[Far-field pattern in terms of $\ualt$ in the nominal domain]\label{cor:Monk}
Let $\lambda>0$ be as in \S\ref{sec:param}, \eqref{chi}, and choose $\eta>0$ such that $\eta<\lambda$. 
Let further
$\psi\in C^2(\mathbb{R}^d;[0,1])$ be such that $\psi\equiv 0$ in a neighbourhood of $B_{2-\lambda}$ 
and $\psi\equiv1$ in a neighbourhood of $\mathbb{R}^d\setminus B_{2-\eta}$. 
Let $\varphi \in C^2(\mathbb{R}^d;[0,1])$ be as in \eqref{eq:psi_pw}, and 
let $\ualt$ be the outgoing solution of \eqref{eq:pw_f}. Then 
\begin{align}\nonumber
\refs{u}^{\rm scat}_\infty(\rC;\unitxh)  
  &= C(d,k) \int\limits_{\supp \nabla \psi}
  \Big(\refs{u}^{\rm alt}(\rC;\refs{\By}) - \refs{u}^{\rm inc}(\refs{\By})\Big)
  \Big( \Delta \psi(\refs{\By}) - 2 \ri k\,
  \unitxh\cdot\nabla\psi(\refs{\By}) \Big) \cdot \\
&  \hspace{8cm}
  \exp\big(-\ri k \refs{\By} \cdot \unitxh\big) \, {\rm d} \refs{\By}\;,
 \quad \unitxh\in\bbS^{d-1}\;.
\label{eq:faft}
\end{align}
Furthermore, given $C_{\Re}>2$, 
let {$\cA = \cA(k;C_{\Re},C_1)$ for $k > \frac12$ 
be as in \eqref{eq:Anset} and Corollary \ref{cor:A}. 
}

Then the map $\Ca\to \Ltwo[\bbS^{d-1}]:\rC\mapsto \refs{u}^{\rm scat}_{\infty}$ is
\emph{holomorphic}, 
and there exist $C', C''>0$ such that, for all $k\geq \frac12$,
\begin{gather}
  \label{eq:ffbd}
  \sup\limits_{\rC\in\Ca}
  \N{\refs{u}^{\rm scat}_{\infty}(\rC;\cdot)}_{L^\infty(\bbS^{d-1})} 
  \leq  
  C' C(d,k)  k \N{ \refs{u}^{\rm inc}}_{H^1_k(B_2)} \leq C'' C(d,k) k.
\end{gather}
The constant $C(d,k)$ is as in \eqref{eq:Cdk}; 
    in particular, as $k\to\infty$, $C(d,k) = O(k^{(d-3)/2})$.
\end{corollary}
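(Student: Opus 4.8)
The plan is to prove the three assertions---the representation \eqref{eq:faft}, holomorphy, and the bound \eqref{eq:ffbd}---in turn, combining Theorem~\ref{thm:Monk} and Lemma~\ref{lem:L2} with the solution estimate of Theorem~\ref{thm:existence}. First I would derive \eqref{eq:faft}. The scattered field $\uscat=u-\uinc$ is outgoing and satisfies $(-\wn^{-2}\Delta-1)\uscat=0$ outside the scatterer $D(\rC)$, which by \eqref{eq:gann} is contained in $B_{4/3}$; hence Theorem~\ref{thm:Monk} applies to $v=\uscat$ with the chosen cutoff $\psi$. Because $\eta<\lambda$, the set $\supp\nabla\psi$ is contained in the closed annulus $\{2-\lambda\le|\Bx|\le 2-\eta\}\subset B_{2-\eta}$, on which three things hold simultaneously: $n\equiv1$ (as $2-\lambda>4/3$); the map $\Phibf(\rC;\cdot)$ is the identity (Lemma~\ref{lem:propphi}, since the annulus lies outside the shell $\{\lambda<|\refs{\Bx}|<2-\lambda\}$), so physical and reference quantities coincide there; and $\varphi\equiv1$, so by \eqref{eq_tilde_u} of Lemma~\ref{lem:L2} one has $\ualt=u$, so that $\refs{u}^{\rm alt}-\refs{u}^{\rm inc}=\uscat$ there. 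Substituting $\uscat=\refs{u}^{\rm alt}-\refs{u}^{\rm inc}$ into the Monk formula of Theorem~\ref{thm:Monk} yields \eqref{eq:faft}.

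For holomorphy, I would read off from \eqref{eq:faft} that $\rC\mapsto\refs{u}^{\rm scat}_\infty(\rC;\cdot)$ equals $T\bigl(\refs{u}^{\rm alt}(\rC;\cdot)\bigr)-T\uinc$, where $T$ is the $\rC$-independent linear operator sending $w$ to the function $\unitxh\mapsto C(d,\wn)\int_{\supp\nabla\psi}w(\refs{\By})\bigl(\Delta\psi-2\ri\wn\,\unitxh\cdot\nabla\psi\bigr)\exp(-\ri\wn\refs{\By}\cdot\unitxh)\,\rd\refs{\By}$; here I used that on $\supp\nabla\psi$ the map is the identity, so $\refs{u}^{\rm inc}=\uinc$ is independent of $\rC$. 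Since the integration is over a fixed compact set and $|\unitxh|=1$, $T$ is bounded from $H^1(B_2)$ into $L^\infty(\bbS^{d-1})\hookrightarrow L^2(\bbS^{d-1})$. The data $f^{\rm alt}$ of \eqref{eq:pw_f} is supported outside the shell, so the corresponding transformed functional is $\rC$-independent and $\rC\mapsto\refs{u}^{\rm alt}(\rC;\cdot)\in H^1(B_2)$ is holomorphic on $\Ca$ by Corollary~\ref{cor:A}. Composing with the bounded linear $T$ preserves holomorphy, giving the claim.

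For \eqref{eq:ffbd}, I would apply Cauchy--Schwarz in \eqref{eq:faft} and use $|\exp(-\ri\wn\refs{\By}\cdot\unitxh)|=1$ to get
\[
|\refs{u}^{\rm scat}_\infty(\rC;\unitxh)|\le C(d,\wn)\,\N{\refs{u}^{\rm alt}-\refs{u}^{\rm inc}}_{L^2(\supp\nabla\psi)}\,\N{\Delta\psi-2\ri\wn\,\unitxh\cdot\nabla\psi}_{L^2(\supp\nabla\psi)}.
\]
The weight factor is bounded by $\N{\Delta\psi}_{L^2}+2\wn\N{\nabla\psi}_{L^2}\le C\wn$ for $\wn\ge\tfrac12$. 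For the solution factor, the crucial gain is that $f^{\rm alt}=-\wn^{-2}(2\nabla\varphi\cdot\nabla\uinc+\uinc\Delta\varphi)$ satisfies $\N{f^{\rm alt}}_{L^2(B_2)}\le C\wn^{-1}\N{\uinc}_{H^1_\wn(B_2)}$, the $\wn^{-2}$ prefactor absorbing the $O(\wn)$ size of $\nabla\uinc$; then Theorem~\ref{thm:existence}(i), whose constant is uniform over $\rC\in\Ca$, gives $\N{\refs{u}^{\rm alt}}_{H^1_\wn(B_2)}\le C_1\wn\N{f^{\rm alt}}_{L^2(B_2)}\le C\N{\uinc}_{H^1_\wn(B_2)}$, hence $\N{\refs{u}^{\rm alt}-\refs{u}^{\rm inc}}_{L^2(\supp\nabla\psi)}\le C\N{\uinc}_{H^1_\wn(B_2)}$. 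Multiplying the two estimates and taking suprema over $\unitxh\in\bbS^{d-1}$ and $\rC\in\Ca$ gives the first inequality in \eqref{eq:ffbd}; the second follows from $\N{\uinc}_{H^1_\wn(B_2)}=O(1)$, which for a plane wave $\uinc=\exp(\ri\wn\Bd\cdot\Bx)$ is immediate since $|\uinc|\equiv1$ and $|\nabla\uinc|\equiv\wn$, whence both terms of the $\wn$-weighted norm are $\wn$-independent constants.

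I expect the main obstacle to be purely geometric bookkeeping: one must pin down the supports of $\psi$, $\varphi$, and the scatterer so that, on $\supp\nabla\psi$, the Monk representation is valid, the index equals $1$, the domain map is the identity, and $\ualt$ coincides with $u$---all at once. Once this geometry is arranged (which is exactly what the constraints $\eta<\lambda$ and $2-\lambda>4/3$ secure), the analytic step reduces to composition with a fixed bounded operator and the quantitative step to the $O(\wn^{-1})$ smallness of $f^{\rm alt}$, both of which are routine.
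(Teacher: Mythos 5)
Your proposal is correct and follows essentially the same route as the paper: apply Theorem~\ref{thm:Monk} to $\uscat$, use Lemma~\ref{lem:L2} and the fact that $\Phibf(\rC;\cdot)$ is the identity on $\supp\nabla\psi$ to rewrite the integrand in terms of $\refs{u}^{\rm alt}-\refs{u}^{\rm inc}$ on the nominal domain, deduce holomorphy from linearity of the far-field map together with Corollary~\ref{cor:A}, and obtain \eqref{eq:ffbd} from Cauchy--Schwarz, the $O(\wn)$ size of the weight, and the $O(\wn^{-1})$ bound $\N{f^{\rm alt}}_{L^2(B_2)}\leq C\wn^{-1}\N{\refs{u}^{\rm inc}}_{H^1_\wn(B_2)}$ combined with Theorem~\ref{thm:existence}(i). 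The geometric bookkeeping you flag ($\eta<\lambda$, $2-\lambda>4/3$) is exactly what the paper relies on as well.
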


In Corollary \ref{cor:Monk} we choose $\psi$ such that $\refs{\Bx}\mapsto
\Phibf(r;\refs{\Bx})$ is the identity on $\supp \psi$ (by \eqref{chi}); 
this choice is not necessary, i.e., an analogous expression to \eqref{eq:faft} holds 
if $\refs{\Bx}\mapsto
\Phibf(r;\refs{\Bx})$ is not the identity on $\supp \psi$, 
but this analogous expression is slightly more complicated than \eqref{eq:faft}.

\bpf[Proof of Corollary \ref{cor:Monk}]
By Theorem \ref{thm:Monk}, $\uscat_\infty$ is given by the right-hand side of \eqref{eq:Monk} with $v$ replaced by $u-\uinc$.
The definition of $\psi$ implies that $\supp \nabla\psi \subset B_{2-\eta}$, and Lemma \ref{lem:L2} implies that $u\equiv \ualt$ on $B_{2-\eta}$.
Therefore, $\uscat_\infty$ is given by the right-hand side of \eqref{eq:Monk} with $v$ replaced by $\ualt-\uinc$.

We now map the integral in \eqref{eq:Monk} back to the nominal domain using $\refs{\Bx}\mapsto
\Phibf(r;\refs{\Bx})$; by \eqref{chi}. This transformation is the identity on 
$\Rea^d\setminus B_{2-\lambda}$, and thus on $\supp\nabla \psi$, and the expression \eqref{eq:faft} follows.

By Corollary \ref{cor:A}, 
the map $\rC\mapsto \refs{u}^{\rm alt}(\rC;\cdot)$ is holomorphic for $\rC\in \mathcal{A}$.
Since the mapping $\refs{u}^{\rm alt} \mapsto
\refs{u}_{\infty}^{\rm scat}$ is linear and 
all other terms entering the integrand in \eqref{eq:faft} are independent of $\rC$, 
the map $\Ca\to \Ltwo[\bbS^{d-1}]$, $\rC\mapsto \refs{u}_{\infty}$, is holomorphic. 
Finally, by \eqref{eq:holo_thm2}, 
the definition \eqref{eq:weighted_norms} of $\|\cdot\|_{(H_k^1(B_2))^*}$, 
and by \eqref{eq:pw_f}, 
for $k\geq k_0$ it holds
\beq\label{eq:falt}
\sup_{\rC\in \mathcal{A}}\big\|\refs{u}^{\rm alt}(\rC;\cdot)\big\|_{H^1_k(B_2)} 
\leq 
C_2 k \big\| \refs{f}^{\rm alt} \big\|_{L^2(B_2)} 
\leq 
C' \big\| \refs{u}^{\rm inc} \big\|_{H^1_k(B_2)}.
\eeq
The bound \eqref{eq:ffbd} then follows 
by combining \eqref{eq:falt} and \eqref{eq:faft}.
\epf

\subsection{Accuracy of the PML $h$-Galerkin FEM approximation of the far-field pattern}    
\label{sec:Accff}
We now use the results of \S\ref{sec:ExpBdsFEM} 
to bound the error in the far-field pattern when PML truncation and the $h$-FEM are 
used to compute an approximation to $\refs{u}^{\rm alt}$, 
and this approximation used in the expression \eqref{eq:faft}.

As above, 
let $\refs{u}^{\rm alt}(\rC;\cdot)$ 
be the outgoing solution of \eqref{eq:pw_f} transformed to the nominal domain. 
Let $\refs{u}^{\rm alt}_{\rm PML}(\rC;\cdot)$ 
be the solution of the PML variational formulation \eqref{eq:PMLvf} 
with the corresponding right-hand side. 
Let $\refs{u}^{\rm alt}_{\rm PML}(\rC;\cdot)_h$ 
be the corresponding solution of the Galerkin equations \eqref{eq:FEM}, 
and let, \cs{for $\unitx \in \bbS^{d-1}$,} 
\begin{gather}
  \boxed{
\refs{u}^{\rm scat}_{\infty, \rm PML}(\rC;\unitxh)_h  
:= 
C(d,k) \int\limits_{\supp \nabla \psi} 
\begin{aligned}[t]
  \Big(\refs{u}^{\rm alt}_{\rm PML}(\rC;\refs{\By})_h - \refs{u}^{\rm
    inc}(\refs{\By})\Big) \Big( \Delta \psi(\refs{\By}) - 2 \ri k\,
  \unitxh\cdot\nabla\psi(\refs{\By}) \Big)\cdot \\  \exp\big(-\ri k \refs{\By}
  \cdot \unitxh\big) \, {\rm d} \refs{\By}
\end{aligned}}
\label{eq:faft2}
\end{gather}
(i.e., \eqref{eq:faft} with $\refs{u}^{\rm alt}(\rC;\refs{\By})$ 
 replaced by $\refs{u}^{\rm alt}_{\rm PML}(\rC;\refs{\By})_h$) 
denote the corresponding approximate far-field.

\begin{corollary}[Error in the computed far-field pattern]
\label{cor:FEMfar-field}
  Under the assumptions of Theorem~\ref{thm:FEM1} 
  and with $k_{1}$ as in Theorem~\ref{thm:inherit},
  there exists $C_{\rm FF,1}$ 
  such that 
  for all $k\geq k_1$, 
  for all $\rC \in \cA_p$, and 
  for all $h,k$, and $p$ satisfying
  \eqref{eq:FEMthreshold}, 
\begin{align}\nonumber
&\N{\refs{u}^{\rm scat}_\infty(\rC;\cdot)
-
\refs{u}^{\rm scat}_{\infty, \rm PML}(\rC;\cdot)_h }_{L^\infty(\mathbb{S}^{d-1})} 
\\
&\qquad
\leq  \,C_{\rm FF,1}  C(d,k) 
\Big[
 \Big( hk + (hk)^p k\Big)(hk)^p k
 +
  \exp \Big( - C_{\rm PML, 2} k\big(R_{\rm tr}-(1+\epsilon) R_1\big)\Big)\Big] 
\N{ \refs{u}^{\rm inc}}_{H^1_k(B_2)}, 
  \label{eq:FEMfar-field}
\end{align}
where $C(d,k)$ is given for $d=2,3$ by \eqref{eq:Cdk}.
\end{corollary}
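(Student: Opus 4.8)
The plan is to exploit the \emph{linearity} of the far-field map. Subtracting the definition \eqref{eq:faft2} of $\refs{u}^{\rm scat}_{\infty,\rm PML}(\rC;\cdot)_h$ from the expression \eqref{eq:faft} for $\refs{u}^{\rm scat}_\infty(\rC;\cdot)$, the incident-wave terms and the $\rC$-independent kernel cancel, leaving, for every $\unitxh\in\bbS^{d-1}$,
\begin{equation*}
\refs{u}^{\rm scat}_\infty(\rC;\unitxh)-\refs{u}^{\rm scat}_{\infty,\rm PML}(\rC;\unitxh)_h = C(d,k)\int\limits_{\supp\nabla\psi}\big(\refs{u}^{\rm alt}(\rC;\refs{\By})-\refs{u}^{\rm alt}_{\rm PML}(\rC;\refs{\By})_h\big)\big(\Delta\psi(\refs{\By})-2\ri k\,\unitxh\cdot\nabla\psi(\refs{\By})\big)\exp\big(-\ri k\refs{\By}\cdot\unitxh\big)\,\rd\refs{\By}.
\end{equation*}
First I would bound this integral. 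Since $\psi$ is a fixed $C^2$ function and $|\unitxh|=1$, the kernel factor obeys $|\Delta\psi-2\ri k\,\unitxh\cdot\nabla\psi|\leq C(1+k)\leq C'k$ for $k\geq k_1$, while the exponential has modulus one; moreover $\supp\nabla\psi\subset B_2\subset B(0,R_1)\subset\Omega_{\rm tr}$ has fixed finite measure. Hence, by Cauchy--Schwarz in $\refs{\By}$ and taking the supremum over $\unitxh$,
\begin{equation*}
\N{\refs{u}^{\rm scat}_\infty(\rC;\cdot)-\refs{u}^{\rm scat}_{\infty,\rm PML}(\rC;\cdot)_h}_{L^\infty(\bbS^{d-1})}\leq C\,C(d,k)\,k\,\N{\refs{u}^{\rm alt}(\rC;\cdot)-\refs{u}^{\rm alt}_{\rm PML}(\rC;\cdot)_h}_{L^2(\supp\nabla\psi)}.
\end{equation*}

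Next I would split the interior $L^2$ error with the triangle inequality (suppressing the arguments $(\rC;\cdot)$),
\begin{equation*}
\N{\refs{u}^{\rm alt}-\refs{u}^{\rm alt}_{\rm PML}(\rC;\cdot)_h}_{L^2(\supp\nabla\psi)}\leq \N{\refs{u}^{\rm alt}-\refs{u}^{\rm alt}_{\rm PML}}_{L^2(B(0,R_1))}+\N{\refs{u}^{\rm alt}_{\rm PML}-\refs{u}^{\rm alt}_{\rm PML}(\rC;\cdot)_h}_{L^2(\Omega_{\rm tr})},
\end{equation*}
and control the two terms separately. The first is the PML truncation error: since $\N{\cdot}_{L^2}\leq\N{\cdot}_{H^1_k}$, Theorem~\ref{thm:GLS} gives
\begin{equation*}
\N{\refs{u}^{\rm alt}-\refs{u}^{\rm alt}_{\rm PML}}_{L^2(B(0,R_1))}\leq C_{\rm PML,1}\exp\big(-C_{\rm PML,2}k(R_{\rm tr}-(1+\epsilon)R_1)\big)\N{\refs{f}^{\rm alt}}_{L^2(B_2)}.
\end{equation*}
The second is the $h$-FEM error \eqref{eq:FEMresult4} of Theorem~\ref{thm:FEM1}, yielding
\begin{equation*}
\N{\refs{u}^{\rm alt}_{\rm PML}-\refs{u}^{\rm alt}_{\rm PML}(\rC;\cdot)_h}_{L^2(\Omega_{\rm tr})}\leq C_{\rm FEM,4}\big(hk+(hk)^pk\big)(hk)^p\,\N{\refs{u}^{\rm alt}_{\rm PML}}_{H^1_k(\Omega_{\rm tr})}.
\end{equation*}

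Finally I would convert both right-hand sides into $\N{\refs{u}^{\rm inc}}_{H^1_k(B_2)}$ and collect the powers of $k$. By the data bound \eqref{eq:falt} one has $\N{\refs{f}^{\rm alt}}_{L^2(B_2)}\leq Ck^{-1}\N{\refs{u}^{\rm inc}}_{H^1_k(B_2)}$, so in the PML term the kernel factor $k$ cancels this $k^{-1}$ and the contribution to \eqref{eq:FEMfar-field} is exactly $C(d,k)\exp(\cdots)\N{\refs{u}^{\rm inc}}_{H^1_k(B_2)}$. For the FEM term, the solution-operator bound \eqref{eq:inherit1} together with \eqref{eq:falt} gives $\N{\refs{u}^{\rm alt}_{\rm PML}}_{H^1_k(\Omega_{\rm tr})}\leq Ck\N{\refs{f}^{\rm alt}}_{L^2(B_2)}\leq C'\N{\refs{u}^{\rm inc}}_{H^1_k(B_2)}$, so the kernel factor $k$ survives and produces the claimed $\big(hk+(hk)^pk\big)(hk)^p k$. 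Uniformity in $\rC\in\cA_p$ is inherited for free, since the constants in Theorems~\ref{thm:GLS} and \ref{thm:FEM1} and in \eqref{eq:inherit1}, \eqref{eq:falt} are already uniform over $\cA_p$. The only genuine point requiring care, rather than a true obstacle, is that \eqref{eq:FEMresult4} is applicable, which needs the right-hand side of \eqref{eq:pw_f} to be ``$k$-oscillatory'' in the sense of \eqref{eq:koscillatory}: this holds because $\uinc$ is a plane wave, so every spatial derivative carries one factor of $k$, while $\varphi$ is a fixed (sufficiently smooth) cut-off, whence differentiating $\refs{f}^{\rm alt}=-k^{-2}\big(2\nabla\varphi\cdot\nabla\uinc+\uinc\Delta\varphi\big)$ up to order $p-1$ produces at most $k^\ell$ growth relative to $\N{\refs{f}^{\rm alt}}_{L^2}$. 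Beyond this verification the argument is pure bookkeeping, the only delicate step being the correct apportioning of the single $k$ from the far-field kernel, which must cancel the $k^{-1}$ in the PML term yet remain as the extra $k$ in the FEM term.
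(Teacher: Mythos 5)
Your proposal is correct and follows essentially the same route as the paper's proof: the same linearity-based subtraction of \eqref{eq:faft2} from \eqref{eq:faft}, the same Cauchy--Schwarz estimate producing the factor $k\,C(d,k)$, the same triangle-inequality split into the PML truncation error (Theorem~\ref{thm:GLS}) and the Galerkin error (bound \eqref{eq:FEMresult4} combined with \eqref{eq:inherit1}), and the same final conversion via $\|\refs{f}^{\rm alt}\|_{L^2(B_2)}\leq Ck^{-1}\|\refs{u}^{\rm inc}\|_{H^1_k(B_2)}$ from \eqref{eq:falt}. Your explicit verification that $\refs{f}^{\rm alt}$ is $k$-oscillatory in the sense of \eqref{eq:koscillatory} is a point the paper merely asserts, so that small elaboration is welcome but does not change the argument.
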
  

Since $\| \refs{u}^{\rm inc}\|_{H^1_k(B_2)}\leq C'$, the bound \eqref{eq:FEMfar-field}
shows that \emph{the error in the far-field pattern \eqref{eq:FEMfar-field} is controlled
  uniformly in $k$ provided that $(hk)^p k$ is sufficiently small.}  Since
$C(2,k) = O(k^{-1/2})$, when $d=2$ the error in the far-field pattern decreases with
increasing $k$ subject to this mesh threshold.

\bpf[Proof of Corollary \ref{cor:FEMfar-field}]
By \eqref{eq:faft} and \eqref{eq:faft2},
\begin{align}\nonumber
& \refs{u}^{\rm scat}_{\infty} (\rC;\unitxh)-\refs{u}^{\rm scat}_{\infty, \rm PML}(\rC;\unitxh)_h 
\\
&= C(d,k) \int_{\supp \nabla \psi} 
\Big( 
\refs{u}^{\rm alt}(\rC;\refs{\By})
-\refs{u}^{\rm alt}_{\rm PML}(\rC;\refs{\By})_h 
\Big)
\Big( \Delta \psi(\refs{\By}) - 2 \ri k\,\unitxh\cdot\nabla\psi(\refs{\By}) \Big) 
\exp\big(-\ri k \refs{\By} \cdot \unitxh\big) \, {\rm d} \refs{\By}.
\nonumber
\end{align}
By the Cauchy--Schwarz inequality and the triangle inequality, for some $C'>0$, 
\begin{multline}
\N{  \refs{u}^{\rm scat}_\infty(\rC;\cdot)
-\refs{u}^{\rm scat}_{\infty, \rm PML}(\rC;\cdot)_h
}_{L^\infty(\mathbb{S}^{d-1})} 
\leq  k C(d,k) C' \big\|  \refs{u}^{\rm alt}(\rC;\cdot)-\refs{u}_{\rm PML}^{\rm alt}(\rC;\cdot)_h\big\|_{L^2(B_2)}\\
\leq   k C(d,k) C'\Big[
\big\|\refs{u}^{\rm alt}(\rC;\cdot)-\refs{u}^{\rm alt}_{\rm PML}(\rC;\cdot) \big\|_{L^2(B_2)}
+
 \big\|  \refs{u}^{\rm alt}_{\rm PML}(\rC;\cdot)-\refs{u}^{\rm alt}_{\rm PML}(\rC;\cdot)_h\big\|_{L^2(B_2)} 
 \Big]\;.
 \label{eq:temp_final1}
\end{multline}
To bound the second term in parentheses on the right-hand side of \eqref{eq:temp_final1}, 
we use the 
FE error bound \eqref{eq:FEMresult4} and then the bound \eqref{eq:inherit1} 
applied to  $\|\refs{u}^{\rm alt}_{\rm PML}(\rC;\cdot)\|_{H^1(\Omega_{\rm tr})}$ 
to obtain that
\begin{align}\nonumber
\big\|  \refs{u}^{\rm alt}_{\rm PML}(\rC;\cdot)-\refs{u}^{\rm alt}_{\rm PML}(\rC;\cdot)_h\big\|_{L^2(B_2)} 
&\leq \big\|  \refs{u}^{\rm alt}_{\rm PML}(\rC;\cdot)
-\refs{u}^{\rm alt}_{\rm PML}(\rC;\cdot)_h
\big\|_{L^2(\Omega_{\rm tr})} \\
&\leq 
C_{\rm FEM, 4} \Big( hk + (hk)^p k\Big)(hk)^p  \big\| \refs{u}_{\rm
  PML}(\rC;\cdot)\big\|_{H^1_k(\Omega_{\rm tr})}
  \nonumber
  \\
&  \leq 
C_{\rm FEM, 4}  
 \Big( hk + (hk)^p k\Big)(hk)^p 
  C k \big\|\refs{f}^{\rm alt}\big\|_{L^2(B_2)},\label{eq:temp_final2}
\end{align}
where we have used that $\refs{f}^{\rm alt}$ defined by \eqref{eq:pw_f} 
satisfies the $k$-oscillatory property in 
Theorem \ref{thm:FEM1} (to apply \eqref{eq:FEMresult4}) 
and is supported in $B_2$ (so that the final norm in \eqref{eq:temp_final2} is over $B_2$).
To bound the first term in the parentheses 
on the right-hand side of \eqref{eq:temp_final1},
we use the PML error bound \eqref{eq:GLS}. 
Combining this with \eqref{eq:temp_final1} and \eqref{eq:temp_final2}, 
we obtain that, for some $C''>0$, 
\begin{align}\nonumber
&\N{  \refs{u}^{\rm scat}_\infty(\rC;\cdot)-\refs{u}^{\rm scat}_{\infty, \rm PML}(\rC;\cdot)_h}_{L^\infty(\mathbb{S}^{d-1})} 
\\
&\qquad\leq   k C(d,k) C''\Big[
\Big(hk+(hk)^pk\Big)(hk)^p k
+
 \exp \Big( - C_{\rm PML, 2} k\big(R_{\rm tr}-(1+\epsilon) R_1\big)
\Big) 
\Big]\| \refs{f}^{\rm alt} \|_{L^2(B_2)} 
.\nonumber
\end{align}
The result \eqref{eq:FEMfar-field} then follows by recalling from \eqref{eq:falt} that 
$\| \refs{f}^{\rm alt} \|_{L^2(B_2)} \leq C k^{-1}\| \refs{u}^{\rm inc}\|_{H^1_k(B_2)}$. 
\epf

\begin{remark}
  \label{rem:esme}
  The following two steps in the proof of Corollary \ref{cor:FEMfar-field} might appear
  over-simplistic:
  \begin{enumerate}
  \item using bounds involving the $L^2(B_2)$ norm of the data $f^{\rm alt}$, while for
    plane-wave scattering $f^{\rm alt} \in H^s(B_2)$ for all $s>0$ (see \eqref{eq:pw_f}),
    and
  \item estimating the integral in the expression \eqref{eq:faft} for the far-field
    pattern using the Cauchy--Schwarz inequality, instead of using, say, a duality
    argument.
  \end{enumerate}
  \esc{In \S\ref{sec:appendix1} we describe how, given the current state-of-the-art FEM convergence theory, we cannot do better than arguing as in Points 1 and 2 above.}
\end{remark}

\section{$k$- and $h$-Explicit, Parametric Holomorphy of $\rhffph$}
\label{sec:ParHolIntQuad}

We now study the parametric holomorphy of the PML-Galerkin approximated far-field pattern
$\rhffph$ defined in \eqref{eq:faft2}.

To this end, we \rev{further constrain the generic shapes that were introduced}
in Section~\ref{sec:pi}, with displacement functions $r\in\Cr$ satisfying Assumption~\ref{ass:rbd}.
\rev{We adopt a \emph{affine-parametric} representation of the scatterers' shape.}
\rev{Probabilistic models of} shape uncertainty will subseuently 
    be introduced by placing a (probability) measure on 
\cs{the affine parameter sequences which occur in the radial displacement functions, yielding}
$r=r(\omega)$, $\omega\in \Omega$, 
with $\Omega$ the set of elementary events in a probability space.  
\cs{Specifically,} 
we assume affine-parametric dependence of $r$ 
on a sequence $\Vy:=\left(y_{j}\right)_{j \in \bbN}$ of parameters.  
As is customary in computational UQ for PDEs (see, e.g., \cite{SCG11,KUS24}),
the measure will be constructed as product of probability measures on the co-ordinates $y_j$.  
This is to say that the dependence of $r$ on $\omega$ is expressed through a
sequence $\boldsymbol{Y} = \left( Y_{j} \right)_{j\in\bbN}$ of independent identically
distributed (i.i.d), $(-1,1)$-valued random variables $Y_{j}=Y_{j}(\omega)$.  
 
We thus introduce an \emph{affine uncertainty-parametrization of the scatterer
geometry with globally supported basis functions $r_j(\Bs)$, $j\in \bbN$, with
$\Bs \in \bbS^{d-1}$.  We focus on space dimensions $d=2,3$}.  Doing so leads to a
countably-parametric description of the ensemble of admissible shapes, with parameters
being the expansion coefficients.  

Specifically, we adopt here the Karhunen-Lo\`{e}ve-type shape expansion
\begin{gather}
  \label{eq:rKL}
  r(\boldsymbol{Y}(\omega);\Bs) 
  = 
  \frac{1}{k}\,
  \sum\limits_{j=1}^{\infty}\beta_{j}\,Y_{j}(\omega)\,r_{j}(\Bs)\;,\quad \Bs\in\bbS^{d-1}\;,
\end{gather}
with \emph{$k$-independent} weights $\beta_{j}>0$ and globally supported, 
real-valued expansion functions $r_{j}\in C^{\infty}(\bbS^{d-1})$.
We assume that these are normalized such that $\N{r_{j}}_{C^{0}(\bbS^{d-1})}=1$, for all
$j\in\bbN$. We confine the discussion to the following particular choices:
  \begin{subequations}
    \label{eq:rj}
    \begin{itemize}
    \item For $d=2$ we opt for
      \begin{gather}
        \label{eq:rj2d}
        r_{j}(s) :=
        \begin{cases}
          \sin(\tfrac{j}{2}\,s) & \text{for even } j\;, \\
          \cos(\tfrac{j-1}{2}\,s) & \text{otherwise } \;, \\
        \end{cases}\quad 0\leq s < 2\pi \;.
      \end{gather}
      Here ${\N{r_{j}}_{C^{p,1}(\bbS^{1})}\sim j^{p+1}}$ as $j\to\infty$.
    \item For $d=3$ the $r_{j}$ are rescaled real spherical harmonics:
      \begin{gather}
        \label{eq:rj3d}
        r_{j} := Y_{\ell,m},\;\; j=\ell^{2}+\ell+m+1,\;\; -\ell\leq m\leq \ell, \;\;
        \ell\in\bbN_{0}.
      \end{gather}
      Here $\N{r_{j}}_{C^{p,1}(\bbS^{2})} \sim j^{(p+1)/2}$ as $j\to\infty$.
    \end{itemize}
\end{subequations}

We suppose that in \eqref{eq:rKL},
\begin{itemize}
\item 
$Y_{j}=Y_{j}(\omega)$ i.i.d uniformly in $[-1,1]$: $Y_{j}\sim \mathcal{U}([-1,1])$, 
and 
\item 
the deterministic weight sequence $(\beta_j)_{j\in\bbN}\in \ell_1(\bbN)$ 
with
$\N{(\beta_{j})}_{\ell_{1}(\bbN)}\leq 1 \leq k/3$.
\end{itemize}
These assumptions ensure $r\in\Cr$, \emph{cf.} \eqref{eq:R}.

\begin{remark}
    \label{rem:scalk}
    The scaling with ${k^{-1}}$ and the assumption 
    $Y_j \sim \mathcal{U}([-1,1])$ in \eqref{eq:rKL} 
    limits the shape variations in \eqref{eq:rKL} 
    to size $O(k^{-1})$ for $k\to\infty$, 
    that is, to a size proportional to the
    wavelength when $\beta_j$ is bounded independent of $k$, which is 
    a stronger requirement than the scaling 
    $\N{(\beta_{j})}_{\ell_{1}(\bbN)}\leq \frac{k}{3}$ stipulated above.
    In
    Section~\ref{ss:nonres} we found this to be necessary for the validity of polynomial
    surrogate modeling, which we have in mind throughout this work.
\end{remark}

\begin{remark}[Shape parametrization. Radial expansion functions]
  \label{rmk:RadExp}

\cs{In applications where $D$ is, for example, 
    an ``imperfect sphere'' \cite[Chapter~1]{SCA16}
    one may assume that the shape variations of $D$ are invariant under rotations,}
from which we conclude $\operatorname{Cov} r(\Bs,\Bs') = g(\Bs\cdot\Bs')$ for some
covariance function $g : \cintv{-1,1}\to \bbR$.  Then Karhunen-Lo\`{e}ve expansion of
$r=r(\omega)$ will yield exactly the radial spherical harmonic expansion functions $r_j$
of \eqref{eq:rj}.
\end{remark}

  \begin{remark}
    \label{rem:ocrj}
    All that follows can be adapted also to uncertainty
    parametrization with locally supported function systems, such as splines or wavelet
    functions; see \S\ref{sec:Concl} for further discussion.
  \end{remark}

For UQ one replaces the random variables $Y_{j}$ with deterministic parameters,
and places a product probability measure on the set of parameters.
In light
of \eqref{eq:rKL} the radial displacement function becomes a deterministic, affine-linear
function on the parameter set
\begin{equation}
  \label{eq:prms}
  \prms := \cintv{-1,1}^{\bbN} 
  = 
  \left\{\Vy = \left(y_{j}\right)_{j \in \bbN}:\;
    -1 \leq y_{j}\leq 1, \,\text{ for all } j\in\bbN\right\} \subset \ell^{\infty}(\bbN)\;, 
\end{equation}
via
\begin{equation}
  \label{eq:rd}
  \boxed{r(\Vy;\Bs) := \frac{1}{k} \sum\limits_{j=1}^{\infty}\beta_j\,y_j \,r_j (\Bs)\;,\quad
  \Bs\in\bbS^{d-1}\;,\quad 
  \Vy = (y_j)_{j\in\bbN} \in \prms} \;.
\end{equation}
Via this representation also the solution $\wh{u}=\wh{u}(r;\cdot)$ of \eqref{eq:vft},
the far-field pattern $\rhffp = \rhffparg[r]<\cdot>$ defined in \eqref{eq:faft},
the PML-truncated finite-element Galerkin solution $\rhuhpml[r]$, 
and the corresponding approximate far-field pattern $\rhffph[r]$ 
(assuming exact evaluation of the integral in \eqref{eq:faft2})
all can be regarded as 
deterministic functions of the parameters $\Vy \in \prms$. 
As such they will be tagged with a $\breve{\mbox{$\ $}}$\;, for instance,
\begin{gather}
  \label{eq:bru}
  \breve{u}: \prms\to H^1(B_2)\quad,\quad
  \Vy \mapsto \breve{u}(\Vy;\cdot) 
  := \wh{u}(r(\Vy);\cdot)\;.
\end{gather}
Since $r\mapsto\wh{u}(r;\cdot)$ 
could be extended into the complex domain, 
we can extend all
$\Vy\in\prms$-dependent functions 
to complex-valued parameter sequences $\Vz$
contained in (a suitable superset of)
$\cintv{-1,1}^{\bbN}+\iu \bbR^{\bbN} \subset \ell^{\infty}(\bbN,\bbC)$. 
Thanks to the affine dependence \eqref{eq:rd} of $r(\Vy;\cdot)$ on $\Vy$, 
the domain of analyticity of
the complex-parametric solution manifold
\begin{equation}\label{eq:phpb}
\Vz \mapsto {\breve{u}({\Vz} ;\cdot) := \wh{u}(\rC({\Vz};\cdot);\cdot)} 
\end{equation}
can immediately be read off Corollary~\ref{cor:A}. 
Here, we wrote $\rC(\Vz)$, ${\Vz\in \ell^{1}(\bbN,\bbC)}$, 
for functions defined by \eqref{eq:rd} 
with $z_{j}\in\bbC$ in place of the real-valued parameters $y_{j}\in [-1,1]$.

\begin{corollary}[Domain of analyticity of $\Vz \mapsto \breve{u}({\Vz} ;\cdot)$]
  \label{cor:H}
  The parametric mapping
  \begin{gather}
    {\Vz} \in \ell^{\infty}(\bbN,\bbC) \mapsto {\breve{u}({\Vz} ;\cdot) :=
    \wh{u}(\rC(\Vz);\cdot)}\in H^{1}(B_{2})\;,
  \end{gather}
  defined by combining
  \eqref{eq:uhatzdef} and \eqref{eq:rd} with \eqref{eq:phpb},
  is holomorphic on
  \begin{gather}
    \label{eq:Hh}
    \Ch = \Ch(C_{\Re},C_{\Im},k)
    := \left\{
      \begin{aligned}
        \Vzeta = \left( \zeta_{j} \right)_{j\in\bbN} \in \ell^{\infty}(\bbN,\bbC)
      \end{aligned}:\;
      \begin{aligned}
        & \sum\limits_{j=1}^{\infty}|\beta_{j}||\Re \zeta_{j}| \leq \frac{k}{3}, 
      \\ 
        & \sum\limits_{j=1}^{\infty}|\beta_{j}||\Re \zeta_{j}| \N{r_{j}}_{C^{1}(\bbS^{d-1})}
          \leq k C_{\Re}
        ,\\ & \sum\limits_{j=1}^{\infty}|\beta_{j}| |\Im \zeta_{j}| \N{r_{j}}_{C^{1}(\bbS^{d-1})}
            \leq C_{\Im}
      \end{aligned}
    \right\}\;,
  \end{gather}
  with the $\wn$-independent constants ${C_{\Re}},C_{\Im}>0$ as in Corollary~\ref{cor:A}.
\end{corollary}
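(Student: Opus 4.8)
The plan is to read off Corollary~\ref{cor:H} as the composition of two holomorphic maps: the affine parameter-to-coefficient map $T:\Vz\mapsto \rC(\Vz)$ furnished by the expansion \eqref{eq:rd}, and the shape-to-solution map $\rC\mapsto\wh u(\rC)$ whose holomorphy on $\Ca$ is already established in Corollary~\ref{cor:A}. The entire content beyond ``composition of holomorphic maps is holomorphic'' is then the purely algebraic verification that $T$ sends the constraint set $\Ch$ into the holomorphy domain $\Ca$.

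First I would treat the linear map $T:\Vz\mapsto \rC(\Vz)=\tfrac1k\sum_{j\geq1}\beta_j z_j r_j$ as a map into $C^1(\bbS^{d-1},\bbC)$. For $\Vz\in\Ch$ the second and third inequalities in \eqref{eq:Hh} force absolute convergence of this series in the $C^1$-norm (even though $\N{r_j}_{C^1(\bbS^{d-1})}\to\infty$), so $T$ is well-defined and bounded on $\Ch$, with $\N{T(\Vz)}_{C^1(\bbS^{d-1})}\leq \tfrac1k(kC_{\Re}+C_{\Im})$. Being linear (hence holomorphic along every complex line) and locally bounded, $T$ is holomorphic in the sense of infinite-dimensional complex analysis used throughout the parametric-holomorphy literature; see \cite[Definition~5.1]{MUJ86} and \cite{CCS15}.

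The key step is the inclusion $T(\Ch)\subseteq\Ca(k;C_{\Re},C_{\Im})$. This follows by splitting $\rC(\Vz)$ into real and imaginary parts (using that $\beta_j$ and $r_j$ are real) and applying the triangle inequality together with the normalization $\N{r_j}_{C^0(\bbS^{d-1})}=1$. For any $\Vz\in\Ch$ the three defining bounds in \eqref{eq:Hh} yield, respectively,
\[
\N{\Re\rC(\Vz)}_{C^{0}(\bbS^{d-1})}\leq \tfrac1k\sum_{j}|\beta_j|\,|\Re z_j|\leq \tfrac13,
\]
so that $\rC(\Vz)\in\rCS$ in the sense of \eqref{eq:rCS}, and
\[
\N{\Re\rC(\Vz)}_{C^{1}(\bbS^{d-1})}\leq \tfrac1k\sum_{j}|\beta_j|\,|\Re z_j|\,\N{r_j}_{C^{1}(\bbS^{d-1})}\leq C_{\Re},
\]
\[
k\,\N{\Im\rC(\Vz)}_{C^{1}(\bbS^{d-1})}\leq \sum_{j}|\beta_j|\,|\Im z_j|\,\N{r_j}_{C^{1}(\bbS^{d-1})}\leq C_{\Im}.
\]
These are exactly the three conditions defining $\Ca$ in \eqref{eq:Anset}, whence $\rC(\Vz)\in\Ca$.

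Finally, since $\rC\mapsto\wh u(\rC)$ is holomorphic on $\Ca$ by Corollary~\ref{cor:A} and $T$ is holomorphic with $T(\Ch)\subseteq\Ca$, the composition $\breve u=\wh u\circ T$ is holomorphic on $\Ch$, as claimed. I expect the main obstacle to be not the estimates above, which are routine, but the infinite-dimensional holomorphy bookkeeping: one must confirm that $T$ is genuinely holomorphic into $C^1(\bbS^{d-1},\bbC)$ despite the unbounded factors $\N{r_j}_{C^1(\bbS^{d-1})}$, and that composition preserves holomorphy on the \emph{non-open} set $\Ch$. Both are handled, as is standard, via the local-boundedness-plus-G\^ateaux characterization of holomorphy: the analytic implicit function theorem underlying Corollary~\ref{cor:A} in fact produces a holomorphic germ of $\wh u$ on an open neighbourhood of each point of $\Ca$ (the constants $C_{\Re},C_{\Im}$ may be enlarged infinitesimally), so $T$ carries a small $\ell^\infty$-polydisc about any point of $\Ch$ into that neighbourhood, and holomorphy of the composition is local.
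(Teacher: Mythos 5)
Your proposal is correct and follows essentially the same route as the paper, which states that the domain of analyticity ``can immediately be read off Corollary~\ref{cor:A}'' thanks to the affine dependence \eqref{eq:rd}: your verification that the three constraints in \eqref{eq:Hh} map, via the triangle inequality and the normalization $\N{r_j}_{C^0(\bbS^{d-1})}=1$, onto the three conditions defining $\Ca(k;C_{\Re},C_{\Im})$ is exactly the intended (and in the paper, suppressed) computation. Your additional remarks on the infinite-dimensional holomorphy of the affine map and on locality of the composition are sound and only make explicit what the paper leaves implicit.
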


A sufficient condition for the ``complexified'' radial displacement function $\rC(\Vz)$
from \eqref{eq:rd} to belong to class $\cA_{p}$ as defined in \eqref{eq:Hh_gamma}, that
is, a sufficient condition for the uniform wavenumber-explicit convergence estimates for
PML-based $h$-FEM in Theorem~\ref{thm:FEM1} and for the approximate far-field pattern from
Corollary~\ref{cor:FEMfar-field}, can be expressed in terms of 
complex-valued parameter sequences $\Vz$ in the set $\Ch_{p}$ for $p\in\bbN$, 
where
\begin{equation} \label{eq:Hp} 
  \Ch_{p}= \Ch_{p}(C_{\Re},C_{\Im},k)
  := 
  \left\{
    \Vz \in \Ch(C_{\Re},C_{\Im},k) : 
    \sum\limits_{j=1}^{\infty}|\beta_{j}||\Re z_{j}| \N{r_{j}}_{C^{p,1}(\bbS^{d-1})}\leq C_{\Re} k
  \right\}
\end{equation}
for $p\in\bbN$ \rev{(as introduced in the paragraph following \eqref{eq:FEM}). 
  Examples of
  admissible sequences $\left( \beta_{j} \right)_{j}$ for particular choices of the
  geometry variations $r_{j}$ will be presented in Section~\ref{sec:SmPMLFE}, see \eqref{eq:SumIbet}}.

From Corollary~\ref{cor:FEMfar-field}, \eqref{eq:ffbd}, 
and the triangle inequality, 
we conclude $k$-explicit, and $h$-uniform stability of the
complex-parametric, approximate far-field.
\medskip

\noindent\fbox{
  \begin{minipage}{0.975\textwidth}
  \begin{corollary}[Domain of analyticity of $\Vz\mapsto {\rbffph[\Vz]<\cdot>}$]
    \label{cor:ffpby}
Under the assumptions of Theorem~\ref{thm:FEM1} the approximate parametric
far-field pattern $\Vz\mapsto {\rbffph[\Vz]<\cdot>}$, obtained from $\rhffph$ in
\eqref{eq:faft2} via \eqref{eq:phpb}, is holomorphic on $\Ch_{p}$
    and satisfies  
    \begin{gather}
      \label{eq:ffpbybd}
      \exists C>0:\quad \NLtwo[\bbS^{d-1}]{\rbffph[\Vz]<\cdot>} 
             \leq C k \|\widehat{u}^{\mathrm{inc}}\|_{H^1_k(B_2)} \quad
      \begin{aligned}
        & \forall k\geq k_1 \;\text{($k_{1}$ as in Theorem \ref{thm:inherit})},\\
        & \forall \Vz \in \Ch_p ,\\
        & \forall h,p,
        \;\text{satisfying \eqref{eq:FEMthreshold}.}
      \end{aligned}
    \end{gather}
An analogous assertion is also valid for the exact far-field pattern
    $\Vz\mapsto \rbffparg[\Vz]<\cdot>$.
  \end{corollary}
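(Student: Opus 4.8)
The plan is to reduce every assertion to results already proved for the $\rC$-parametrization, exploiting that $\Ch_p$ was defined precisely so that the affine map $\Vz\mapsto \rC(\Vz)$ of \eqref{eq:rd} carries $\Ch_p$ into $\cA_p$ (and $\Ch$ into $\cA$). The first thing I would check is this containment: since $\N{r_j}_{C^0(\bbS^{d-1})}=1$ and the $\beta_j,r_j$ are real, dividing the three summability conditions in \eqref{eq:Hh} and the additional $C^{p,1}$-condition in \eqref{eq:Hp} by $k$ turns them term by term into $\N{\Re \rC(\Vz)}_{C^0(\bbS^{d-1})}\leq\tfrac13$, $\N{\Re \rC(\Vz)}_{C^1(\bbS^{d-1})}\leq C_{\Re}$, $k\N{\Im \rC(\Vz)}_{C^1(\bbS^{d-1})}\leq C_{\Im}$ and $\N{\Re \rC(\Vz)}_{C^{p,1}(\bbS^{d-1})}\leq C_{\Re}$, i.e.\ exactly the inequalities defining $\cA_p$ in \eqref{eq:Anset}, \eqref{eq:Hh_gamma}. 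With this in hand, holomorphy follows by composition and the bound by the triangle inequality.

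For holomorphy I would exhibit $\Vz\mapsto \rbffph[\Vz]<\cdot>$ as a composition of three holomorphic maps: (i) the affine, hence entire, map $\Vz\mapsto \rC(\Vz)\in C^{p,1}(\bbS^{d-1},\bbC)$; (ii) the map $\rC\mapsto \refs{u}^{\rm alt}_{\rm PML}(\rC;\cdot)_h\in V_h$, whose holomorphy on $\cA_p$ is established inside the proof of Theorem~\ref{thm:FEM1} (via the a priori bound inherited from \eqref{eq:FEMresult1}, \eqref{eq:inherit1} together with the analytic implicit function theorem, exactly as in Corollary~\ref{cor:A}); and (iii) the far-field functional \eqref{eq:faft2}, which for fixed $k,h,\psi$ is a bounded linear map into $\Ltwo[\bbS^{d-1}]$, its kernel being $\rC$-independent and $V_h$ finite-dimensional. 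Because the first map sends $\Ch_p$ into $\cA_p$, the composition is holomorphic on $\Ch_p$.

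For the quantitative bound I would write $\rbffph[\Vz]<\cdot>=\rbffparg[\Vz]<\cdot>-\big(\rbffparg[\Vz]<\cdot>-\rbffph[\Vz]<\cdot>\big)$ and estimate each piece in $L^\infty(\bbS^{d-1})$, passing to $L^2(\bbS^{d-1})$ via the finite measure of the sphere. The exact term is handled directly by \eqref{eq:ffbd}; since $C(d,k)=O(k^{(d-3)/2})$ stays bounded for $k\geq k_1$ in both $d=2,3$, it is $\leq Ck\,\N{\widehat{u}^{\mathrm{inc}}}_{H^1_k(B_2)}$. The difference term is bounded, uniformly for $\rC(\Vz)\in\cA_p$, by Corollary~\ref{cor:FEMfar-field}. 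The one genuinely delicate point — and where I expect the actual work — is the $k$-power bookkeeping in the factor $C(d,k)\big[(hk+(hk)^pk)(hk)^pk+\exp(\cdots)\big]$: I would show that under the mesh threshold \eqref{eq:FEMthreshold}, i.e.\ $(hk)^{2p}k\leq C_{\rm FEM,1}$, the dominant summand $(hk)^{2p}k^2\leq C_{\rm FEM,1}k$ is $O(k)$, the term $(hk)^{p+1}k$ is of lower order, and the exponential is bounded, so that multiplication by the bounded factor $C(d,k)$ leaves the whole contribution $\leq Ck\,\N{\widehat{u}^{\mathrm{inc}}}_{H^1_k(B_2)}$. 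Adding the two pieces yields \eqref{eq:ffpbybd}.

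Finally, the analogous statement for the exact far-field $\Vz\mapsto\rbffparg[\Vz]<\cdot>$ needs no new ingredients: its holomorphy on $\Ch\supset\Ch_p$ follows by composing the holomorphy of $\rC\mapsto \refs{u}^{\rm scat}_\infty(\rC;\cdot)$ on $\cA$ (Corollary~\ref{cor:Monk}) with the entire map $\Vz\mapsto\rC(\Vz)$, and its $L^2(\bbS^{d-1})$-bound is precisely the exact-term estimate above. Apart from this composition, the only thing to monitor throughout is that all constants borrowed from Theorem~\ref{thm:FEM1}, Corollary~\ref{cor:FEMfar-field} and \eqref{eq:ffbd} are uniform in $\rC\in\cA_p$ (which they are by construction); the essential content of the corollary is then just the observation that the threshold \eqref{eq:FEMthreshold} is exactly strong enough to keep the approximate far-field stable at the same $O(k)$ growth as the exact one.
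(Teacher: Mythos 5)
Your proposal is correct and follows essentially the same route as the paper, whose own justification is the single sentence preceding the corollary: combine the exact-far-field bound \eqref{eq:ffbd} with the PML/FEM error bound of Corollary~\ref{cor:FEMfar-field} via the triangle inequality, and obtain holomorphy by composing the affine map $\Vz\mapsto\rC(\Vz)$ (which sends $\Ch_p$ into $\cA_p$ by construction) with the analyticity of the Galerkin solution established in the proof of Theorem~\ref{thm:FEM1} and the linearity of \eqref{eq:faft2}. Your additional bookkeeping --- checking that under the threshold \eqref{eq:FEMthreshold} the bracket in \eqref{eq:FEMfar-field} is $O(k)$ and that $C(d,k)$ stays bounded --- is exactly the implicit content of the paper's one-line argument.
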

\end{minipage}}
\medskip

Thanks to Remark~\ref{rmk:PMLHol} we can also conclude $k$- and $h$-uniform parametric holomorphy of
the PML/FEM-Galerkin solutions.

\begin{corollary}[Uniform parametric holomorphy of PML/FEM-Galerkin solutions]
  \label{cor:ffpby2}
Under the assumptions of Theorem~\ref{thm:FEM1}, for
  \begin{itemize}
  \item $\Vz\mapsto \breve{u}^{\rm alt}(\Vz;\cdot)$, the outgoing solution of \eqref{eq:pw_f}
    transformed to the nominal domain, obtained from $\refs{u}^{\rm alt}(\rC;\cdot)$ via
    the correspondence \eqref{eq:bru}, 
  \item
    $\Vz\mapsto \breve{u}^{\rm alt}_{\rm PML}(\Vz;\cdot)$
    corresponding similarly to the exact solution of the PML variational formulation
    \eqref{eq:PMLvf} with the corresponding right-hand side, and 
  \item the parameterized PML/FEM-Galerkin approximation
    $\Vz\mapsto \breve{u}^{\rm alt}_{\rm PML}(\Vz;\cdot)_h$ derived from the solution of
    \eqref{eq:FEM},
  \end{itemize}
  the following holds.
  
  These parametric solution families are holomorphic as an $H^1_k(\Omega)$-valued map in a
  \emph{$h$ and $k$-independent} set
  $\Ch_{p}(C_{\Re},C_{\Im},{k_1})\subset \ell^{\infty}(\bbN,\bbC)$ (with $k_{1}$ as in
    Theorem~\ref{thm:inherit}), and, uniformly on this set, satisfy the bound
  \eqref{eq:falt}, with $\breve{u}^{\rm alt}_{\rm PML}(\Vz;\cdot)$ and
  $\breve{u}^{\rm alt}_{\rm PML}(\Vz;\cdot)_h$ in place of
  $\refs{u}^{\rm alt}(\rC;\cdot)$, correspondingly.
\end{corollary}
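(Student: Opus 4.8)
The plan is to derive all three holomorphy statements, together with the accompanying uniform bounds, from a single \emph{composition argument}: I reduce everything to the already-established analyticity in the displacement function $\rC$ (Corollary~\ref{cor:A}, Remark~\ref{rmk:PMLHol}, and the analyticity of the Galerkin map recorded in the proof of Theorem~\ref{thm:FEM1}) combined with the affine structure of the parametrization \eqref{eq:rd}. First I focus on the map $T:\Vz\mapsto\rC(\Vz;\cdot)$ from \eqref{eq:rd}. Since the $r_j\in C^\infty(\bbS^{d-1})$ are fixed and real-valued, $T$ is \emph{affine-linear}; under the summability of the admissible weights $\big(\beta_j\N{r_j}_{C^{p,1}(\bbS^{d-1})}\big)_j\in\ell^1(\bbN)$ (the examples provided in \eqref{eq:SumIbet} of Section~\ref{sec:SmPMLFE}) it is bounded from $\ell^\infty(\bbN,\bbC)$ into $C^{p,1}(\bbS^{d-1},\bbC)$, hence \emph{holomorphic}, being the locally uniform limit of its finite—and therefore polynomial—truncations. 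Next I verify that $T$ maps $\Ch_p(C_{\Re},C_{\Im},k)$ into $\cA_p(k;C_{\Re},C_{\Im})$: splitting $\rC(\Vz)$ into real and imaginary parts and using $\N{r_j}_{C^0(\bbS^{d-1})}=1$, the three defining inequalities of $\Ch$ in \eqref{eq:Hh} together with the extra inequality in \eqref{eq:Hp} become, after absorbing the $1/k$ prefactor of \eqref{eq:rd}, exactly the bounds $\N{\Re\rC}_{C^0}\leq\tfrac13$, $\N{\Re\rC}_{C^1}\leq C_{\Re}$, $k\N{\Im\rC}_{C^1}\leq C_{\Im}$ and $\N{\Re\rC}_{C^{p,1}}\leq C_{\Re}$ of \eqref{eq:Anset} and \eqref{eq:Hh_gamma}. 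Finally, enlarging $k$ only relaxes the first two and the last of these inequalities while leaving the imaginary-part condition untouched, so $\Ch_p(C_{\Re},C_{\Im},k_1)\subset\Ch_p(C_{\Re},C_{\Im},k)$ for every $k\geq k_1$; this nesting is precisely what renders the parameter set in the statement $h$- and $k$-independent.

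With $T$ in hand the three holomorphy assertions follow by composition. For $\Vz\mapsto\breve{u}^{\rm alt}(\Vz;\cdot)=\refs{u}^{\rm alt}(\rC(\Vz);\cdot)$ I invoke Corollary~\ref{cor:A}, after noting that for the right-hand side $f^{\rm alt}$ of \eqref{eq:pw_f} the pullback functional $\refs{L}$ is \emph{independent of} $\rC$: since $\eta<\lambda$, the support of $\nabla\varphi$ lies in $\{\,|\Bx|>2-\lambda\,\}$, where $\chi\equiv0$ and hence $\Phibf(\rC;\cdot)$ is the identity. Thus $\rC\mapsto\refs{u}^{\rm alt}(\rC;\cdot)$ is holomorphic on $\Ca$, and $\breve{u}^{\rm alt}=\refs{u}^{\rm alt}\circ T$ is holomorphic on $\Ch_p(C_{\Re},C_{\Im},k_1)$. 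For $\Vz\mapsto\breve{u}^{\rm alt}_{\rm PML}(\Vz;\cdot)$ I use Remark~\ref{rmk:PMLHol}, which records that $\rC\mapsto\refs{u}_{\rm PML}(\rC;\cdot)$ is holomorphic on $\Ca$ because the transformations leave the PML zone invariant; composing with $T$ again gives holomorphy. For the discrete family $\Vz\mapsto\breve{u}^{\rm alt}_{\rm PML}(\Vz;\cdot)_h$ I use the analyticity of the Galerkin map $\rC\mapsto\refs{u}_{\rm PML}(\rC;\cdot)_h$ established within the proof of Theorem~\ref{thm:FEM1} (via uniform quasi-optimality and the analytic implicit function theorem), valid on $\cA_p$; since $T(\Ch_p)\subset\cA_p$, the composition is holomorphic on $\Ch_p(C_{\Re},C_{\Im},k_1)$.

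The uniform bounds are the pullback along $T$ of the $\rC$-uniform estimates already in force. The continuous family inherits \eqref{eq:falt}, which is uniform over all $\rC\in\Ca\supset T(\Ch_p)$; the exact PML family inherits the bound \eqref{eq:inherit1} of Theorem~\ref{thm:inherit}, uniform over $\Ca$ for $k\geq k_1$; and the Galerkin family inherits the \emph{same} bound, because—as observed in the proof of Theorem~\ref{thm:FEM1}—under the mesh threshold \eqref{eq:FEMthreshold} the quasi-optimality \eqref{eq:FEMresult1} forces $\refs{u}^{\rm alt}_{\rm PML}(\rC;\cdot)_h$ to satisfy \eqref{eq:inherit1} with a constant independent of $\rC\in\cA_p$, of $h$ and of $k\geq k_1$. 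Combining these with the data estimate $\N{f^{\rm alt}}_{L^2(B_2)}\leq Ck^{-1}\N{\refs{u}^{\rm inc}}_{H^1_k(B_2)}$ from \eqref{eq:falt} reproduces the asserted form of \eqref{eq:falt} with $\breve{u}^{\rm alt}_{\rm PML}(\Vz;\cdot)$ and $\breve{u}^{\rm alt}_{\rm PML}(\Vz;\cdot)_h$ in place of $\refs{u}^{\rm alt}(\rC;\cdot)$, uniformly on $\Ch_p(C_{\Re},C_{\Im},k_1)$.

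The one genuinely non-formal point—and the step I would treat most carefully—is the bookkeeping of the first paragraph: one must check that the $k$-dependent rescaling $1/k$ in \eqref{eq:rd} interacts with the $k$-scaled defining inequalities of $\Ch_p$ and $\cA_p$ so that a \emph{single}, $h$- and $k$-independent set $\Ch_p(C_{\Re},C_{\Im},k_1)$ is simultaneously admissible for all $k\geq k_1$ and all $h$ meeting \eqref{eq:FEMthreshold}. Everything else is an application of the principle that a holomorphic Banach-space-valued map precomposed with a bounded affine map is holomorphic, together with the transfer of the already-established $\rC$-uniform a priori bounds; no new analytic estimate is needed.
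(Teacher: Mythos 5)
Your proposal is correct and follows essentially the same route the paper intends: the paper states this corollary without a separate proof, deriving it by composing the affine (hence holomorphic) parametrization $\Vz\mapsto\rC(\Vz)$ of \eqref{eq:rd} with the $\rC$-holomorphy and uniform bounds already established in Corollary~\ref{cor:A}, Remark~\ref{rmk:PMLHol}, and the proof of Theorem~\ref{thm:FEM1}, exactly as you do. Your explicit verification that $\Ch_{p}(C_{\Re},C_{\Im},k_1)$ maps into $\cA_{p}(k;C_{\Re},C_{\Im})$ for all $k\geq k_1$ (the interaction of the $1/k$ prefactor in \eqref{eq:rd} with the $k$-scaled defining inequalities) is the right point to be careful about and is carried out correctly.
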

\section{Computational Shape Uncertainty Quantification}
\label{sec:AplShUQ}
\newcommand{\pbar}{\bar{p}}

The $k$ explicit, uniform w.r. to $h$ parametric holomorphy of the PML-Galerkin solution
is the basis for the error analysis of efficient, deterministic computation of quantities
of interest (QoIs).  In a UQ-context these are, for example, the expectation
$\bbE(\breve{u}^{\rm scat}_\infty)$ and, possibly, higher order spatial correlation
functions of the corresponding random field
$\omega \mapsto \breve{u}^{\rm scat}_\infty(\omega):=\breve{u}^{\rm
  scat}_\infty(\VY(\omega);\cdot)$.  Such QoIs will be deterministic, smooth functions on
$\bbS^{d-1}$.  Since all random variables $Y_{j}(\omega)$ are uniformly distributed in
$[-1,1]$, we find with the parametric, deterministic shape representation $r(\Vy;\cdot)$
from \eqref{eq:rd} the following expressions for the mean 
of the far-field pattern corresponding to $\breve{u}$ in \eqref{eq:bru}, i.e.
\begin{equation}
  \label{eq:mv}
  \begin{array}{rl}
    \bbE(\breve{u}^{\rm scat}_\infty)(\unitx)
    & = \int\nolimits_{\prms}
    \breve{u}^{\rm scat}_\infty({\Vy} ; \unitx)\,\mathrm{d}\mu({\Vy})\;, \quad \unitx\in\bbS^{d-1} \;,
\end{array}
\end{equation}
for all far-field directions $\unitx = \nicefrac{\Bx}{|\Bx|}\in\bbS^{d-1}$.
Here, 
the measure $\mu$ denotes 
the countable product of the uniform probability measure on $[-1,1]$.
We add that in actual computations in \eqref{eq:mv} the 
far-field pattern $\breve{u}^{\rm scat}_\infty$ 
has to be replaced with the computable approximation $\rbffph[\cdot]<\cdot>$ 
in \eqref{eq:faft2}.

The formula \eqref{eq:mv} is a so-called ``ensemble average'' over all admissible
scatterer shapes, and involve ``infinite-dimensional integrals'', whose efficient
approximation by (possibly higher-order and deterministic) quadrature formulas is
addressed next.
\subsection{High-Dimensional Smolyak / Sparse-Grid Quadrature}
\label{sec:HDQad}
The QoI \eqref{eq:mv} 
being a countably-parametric, deterministic integral, 
we discuss two classes of numerical integration: 
first,
so-called \emph{Smolyak-Quadrature} (see \cite{ZS20_2485} and references there) 
and 
second, so-called \emph{Higher-Order QMC Quadrature}, in particular so-called
\emph{interlaced polynomial lattice rules}  (``IPL-QMC integration'', see \cite{JDTlGChS2016}).
In these references,
convergence rates of both numerical integration methods have been proved 
to be independent of the dimension of the domain of integration, under suitable
quantified holomorphy of the integrands.
In view of the preceding discussion on quantified, 
wavenumber-explicit holomorphy of 
the parametric solutions
$\{ \breve{\Vy} \mapsto \hat{u}(\breve{\Vy}) : \breve{\Vy} \in \prms \}$, 
we connect the parametric holomorphy analysis to \cite{JDTlGChS2016,ZS20_2485}.
We start by recalling a suitable concept of quantified parametric holomorphy,
which implies dimension-independent convergence rates of 
(i) Smolyak-quadratures \cite{Z18_2760,ZS20_2485}, 
(ii) sparse-grid interpolants \cite{ChCChS14,Z18_2760} 
and 
(iii) higher-order Quasi-Monte Carlo quadratures \cite{JDTlGChS2016}.

In order to quantify the holomorphic parameter dependence, 
we quantify the domain of parametric holomorphy of the (analytic continuation of the)
affine-parametric function
  $\{ \breve{\Vy}\in \prms \mapsto u_{\breve{\Vy}} : \breve{\Vy} \in \prms \}$. 
We still
work with the $\wn^{-1}$-scaling and the affine-parametric parametrizations
$\breve{\Vy} \mapsto \rC(\breve{\Vy})$ from \eqref{eq:rd} and with the globally supported
radial expansion functions $r_j$ from \eqref{eq:rj}.

We present quantified parametric holomorphy for a generic ``target'' Banach space $X$.
Observe that the QoI's in \eqref{eq:mv} take values in the separable, complex Banach
space $X \in \{ C^0(\bbS^{d-1};\bbC) , L^2(\bbS^{d-1};\bbC)\}$.
\begin{definition}[$(\Bb,\pbar,\varepsilon)$-Holomorphy]
\label{def:bpeHol}
Let $X$ be a complex Banach space with norm $\| \circ \|_X$.  For $\varepsilon>0$, a
sequence $\Bb = (b_j)_{j\geq 1} \in (0,\infty)^\bbN$ and some $p\in (0,1)$, 
the parametric map
$$
\prms \ni \breve{\Vy} \mapsto u_{\breve{\Vy}} \in X
$$
is called $(\Bb,\pbar, \varepsilon)$-holomorphic if the following conditions hold.
\begin{itemize}
\item[(i)] 
The map $\prms \ni \breve{\Vy} \mapsto u_{\breve{\Vy}}$ is uniformly bounded, i.e.
there exists a bound $M_0>0$ such that
$$ 
\sup_{\breve{\Vy} \in \prms} \| u_{\breve{\Vy}} \|_X \leq M_0\;,
$$
\item[(ii)]
there holds $\Bb\in \ell^{\pbar}(\bbN)$ and 
there exists a constant $C_\varepsilon > 0$ 
such that for any sequence 
$\Vrho = (\rho_j)_{j\geq 1} \in (1,\infty)^\bbN$
which is \emph{$(\Bb,\varepsilon)$-admissible}, 
i.e.
\beq\label{eq:bepsAdm}
\sum_{j\geq 1} (\rho_j-1)b_j \leq \varepsilon \;,
\eeq
the map $\prms \ni \breve{\Vy} \mapsto u_{\breve{\Vy}}$ 
admits a complex extension $\breve{\Vz} \mapsto u_{\breve{\Vz}}$
that is continuous w.r. to $\breve{\Vz}$ and 
holomorphic w.r. to each variable $\breve{z}_j$ 
on a cartesian product set of the form
$$
\Co_{\Vrho} := \bigtimes_{j\geq 1} \Co_{\rho_j} 
\;.
$$
Here,
for $j\geq 1$, 
$\Co_{\rho_j}\subset \bbC$ is some open set with 
$[-1,1] \subset \cD_{\rho_j} \subset \Co_{\rho_j}$
with strict inclusions where, for $\rho>1$, 
$\cD_\rho := \{ z\in \bbC:  |z| \leq \rho \} $ denotes
the closed disc in $\bbC$ of radius $\rho>1$.
\item[(iii)]
For each $(\Bb,\varepsilon)$-admissible polyradius $\Vrho$, 
the holomorphic extension 
$\{ \breve{\Vz} \mapsto u_{\breve{\Vz}} : \breve{\Vz} \in \Co_{\Vrho} \} \subset X$
of the parametric map 
$\breve{\Vy} \mapsto u_{\breve{\Vy}}$ 
is bounded on the polydisc $\cD_{\Vrho}=\bigtimes_{j\geq 1} \cD_{\rho_j} $ according to
\beq\label{eq:HolExtBd}
\sup_{\breve{\Vz} \in \cD_{\Vrho}} \| u_{\breve{\Vz}} \|_X \leq M_u \;.
\eeq
\end{itemize}
\end{definition}
The significance of $(\Bb,\pbar,\varepsilon)$-holomorphy lies in the fact that 
holomorphic maps between (complex) Banach spaces become, upon adopting
affine-parametric representation of their arguments w.r. to a suitable
representation system 
(such as, e.g. $\{ \psi_j \}_{j\geq 1}$ with $\psi_j \sim \beta_j r_j$ 
where $r_j$ is as in Remark~\ref{rmk:RadExp})
$(\Bb,\varepsilon)$-holomorphic maps in terms of the coefficient sequences
in the representation of inputs (see \cite[Lemma~3.3]{ZS20_2485}).
The summability exponent $\pbar\in (0,1)$ of the sequence $\bsb$ 
determines the convergence rate of suitable Smolyak quadratures 
in \eqref{eq:SmolErr} below.

\cs{
Next, we proceed to estimating the Smolyak Quadrature Error.
To this end,}
we recall the definition of a sparse-grid Smolyak quadrature: given a 
sequence $(\chi_{n;j})_{j=0}^n$, $n\in \bbN_0$ of $n$-tuples of pairwise
distinct points in $[-1,1]$, 
Smolyak quadratures are built on 
corresponding univariate interpolatory quadrature rules $(Q_n)_{n\geq 0}$ 
with nodes $\chi_{n;0},...,\chi_{n;n}\subset [-1,1]$ 
and corresponding weights $w_{n;j} > 0$ 
w.r. to the uniform (probability) measure $\frac{1}{2} \lambda^{1}$, 
i.e. 
$$
Q_n f = \sum_{j=0}^n w_{n;j} f(\chi_{n;j}) \;,\quad 
w_{n;j} 
:= 
\frac{1}{2} \int_{-1}^1 \prod_{i=0,i\ne j}^n \frac{y-\chi_{n;j}}{\chi_{n;i}-\chi_{n;j}} dy
\;.
$$
By construction, $Q_n$ is exact for univariate polynomials of degree $n$.
The weights $w_{n;j}$ can be negative, in general.
For the error bound \cite[Theorem 2.16]{ZS20_2485} to hold, 
\cs{
we assume stability of the univariate quadrature points
\cite[Eqn. (2.3)]{ZS20_2485}, i.e. 
in the sense that there exist a constant $\vartheta>0$}
such that
\begin{equation}\label{eq:Qstab}
\forall n\in \bbN_0: \quad 
\sup_{0\ne f\in C^0([-1,1])} \frac{|Q_nf|}{\| f \|_{C^0([-1,1])}} \leq (n+1)^\vartheta
\end{equation}
We denote in the following the array of univariate sampling points 
\begin{equation}\label{eq:bschi}
\bschi := \left( (\chi_{n;j})_{j=0}^n \right)_{n\in \bbN_0} 
\;.
\end{equation}
We adopt the convention that $\chi_{0;0} = 0$, i.e. 
$Q_0$ corresponds to the midpoint rule, and also set $Q_{-1}:=0$. 
We refer to \cite{CC20} and the references there for concrete constructions
of such points.

Multivariate anisotropic quadratures are 
built from the univariate hierarchy $(Q_n)_{n\geq 0}$ 
by tensorization. 
Let $\cF = \{\bsnu\in \bbN_0^\bbN: |\bsnu|<\infty\}$ denote
the set of finitely supported multiindices, and let 
$\Lambda\subset \cF$ be a downward closed\footnote{
We recall (e.g. \cite[Definition~1.1]{ChCChS14}) that
an index set $\Lambda\subset \cF$ is downward closed (``d.c.'' for short) 
if 
$\bsnu\in \Lambda$ and $\bsmu\leq \bsnu$ implies $\bsmu\in \Lambda$. 
Here, $\bsmu\leq \bsnu$ means $\mu_j\leq \nu_j$ for all $j$.
}
finite index set.
Then for $\bsnu\in \cF$, 
we define the multivariate tensor product quadrature 
$Q_\bsnu := \bigotimes_{j \in \bbN} Q_{\nu_j}$.
Then the \emph{Smolyak Quadrature} for 
a d.c. set $\Lambda \subset \cF$ is defined by 
\begin{equation}
\label{eq:Smolyak}
Q_\Lambda := \sum_{\bsnu\in \Lambda} \bigotimes_{j \in \bbN} (Q_{\nu_j} - Q_{\nu_j -1}) \;. 
\end{equation}
In particular, $Q_\Lambda$ admits the representation
$$
Q_\Lambda := \sum_{\bsnu\in \Lambda} \iota_{\Lambda,\bsnu} Q_\bsnu\;,
\quad 
\mbox{where}
\quad
\iota_{\Lambda,\bsnu} := \sum_{\bse\in \{0,1\}^\bbN: \bsnu+\bse\in \Lambda} (-1)^{|\bse|} 
\;.
$$
Based on this representation, 
one numerical evaluation of $Q_\Lambda$ requires accessing the 
integrand function in all points in the (finite) set
$$
\pts(\Lambda,\bschi) 
:= 
\left\{ (\chi_{\nu_j;\mu_j} : \bsnu\in \Lambda, \iota_{\Lambda,\bsnu}\ne 0, \bsmu\leq \bsnu \right\}
\subset \prms
\;.
$$
We refer to \cite[Section 2.2]{ZS20_2485} for details. 

The main result from \cite[Section 2.5]{ZS20_2485} on the convergence rate of 
$Q_\Lambda$ for suitable downward closed sets $\Lambda\subset \cF$ 
of ``active quadrature orders'' $\bsnu\in \Lambda$ is as follows.

\begin{theorem}\label{thm:Smolyak}
  Let $Z$ and $X$ be complex Banach spaces.  Denote for $r>0$ with
  $B^Z_r = \{ \varphi\in Z: \| \varphi\|_Z <r \}$ the open ball in $Z$ centered at the
  origin of radius $r$.  
  Assume that we are given a \emph{holomorphic} map
  $\Fu:B_r^Z \to X$, a real constant $\delta>0$, a sequence
  $(\psi_j)_{j\in \bbN}\subset Z$, $r>0$, and $\pbar\in (0,1)$.  
  Fix $\delta>0$ arbitrarily small.  
  Assume further that the following hold:
\begin{itemize}
\item[(i)] $\sum_{j \geq 1} \| \psi_j \|_Z < r$ and the sequence
  $\bsb = (\| \psi_j \|_Z) \in \ell_{\pbar}(\bbN)\subset \ell_1(\bbN)$,
\item[(ii)] $\Fu: B^Z_r \to X$ is holomorphic and bounded {by $M_u$},
\item[(iii)] the collection of univariate quadrature abscissae $\bschi$ satisfies
  \eqref{eq:Qstab}.
\end{itemize}
Define the countably parametric map $U: \prms \to X$ via the composition
\begin{equation}\label{eq:DefParInt}
\breve{\Vy} \mapsto U(\breve{\Vy}) := \Fu \left(\sum_{ j \geq 1 } \breve{y}_j \psi_j\right).
\end{equation}

Then, there holds
\begin{itemize}
\item[(i)] {[$\wn$-independent parametric holomorphy]}
The parametric maps $U$ in \eqref{eq:DefParInt} are $(\bsb, \bar{p} ,\varepsilon)$ holomorphic
in the sense of Definition~\ref{def:bpeHol}, with some $\varepsilon>0$ independent of $\wn$,
and
\item[(ii)] {[$\wn$-uniform convergence rate of Smolyak quadrature error]}
There exists a constant $C>0$ (depending on $\delta>0$, but independent of $\wn$) 
such that for every $\epsilon > 0$, there exists 
a finite, downward closed multiindex set $\Lambda_\epsilon \subset \cF$ 
with $|\Lambda_\epsilon| \to \infty$ as $\epsilon \to 0$ 
such that the following error bound holds:
\begin{equation}\label{eq:SmolErr}
\left\| 
\int_\prms U(\breve{\Vy}) d\mu(\breve{\Vy}) - Q_{\Lambda_\epsilon} U 
\right\|_X 
\leq 
C 
|\pts(\Lambda_\epsilon, \bschi)|^{-\frac{2}{\pbar}+1+\delta} 
{M_u}
\;.
\end{equation}
\end{itemize} 
\end{theorem}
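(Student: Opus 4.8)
The plan is to reduce the statement to the abstract Smolyak-quadrature convergence theorem of \cite{ZS20_2485} by verifying that the composite map $U$ is $(\bsb,\pbar,\varepsilon)$-holomorphic in the sense of Definition~\ref{def:bpeHol}. The two assertions of the theorem correspond to the two steps of this reduction: first establishing the quantified parametric holomorphy, and second invoking the quadrature error bound that this holomorphy entails.

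For Part (i), I would proceed as follows. Since $\Fu:B_r^Z\to X$ is holomorphic and bounded by $M_u$, and since the linear map $\Bz\mapsto \sum_{j\geq1}z_j\psi_j$ sends parameter sequences into $Z$, the composition $U$ inherits holomorphy from $\Fu$ by the chain rule for holomorphic maps between Banach spaces. The uniform boundedness condition (i) of Definition~\ref{def:bpeHol} follows immediately: for $\breve{\Vy}\in\prms$ the argument $\sum_j \breve{y}_j\psi_j$ has $Z$-norm at most $\sum_j\|\psi_j\|_Z<r$ by hypothesis (i), so it lies in $B_r^Z$ and hence $\|U(\breve{\Vy})\|_X\leq M_u=:M_0$. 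For conditions (ii) and (iii), the key point is to choose $\varepsilon>0$ so that any $(\bsb,\varepsilon)$-admissible polyradius $\Vrho$ keeps the complex argument inside $B_r^Z$. Concretely, if $\Vz\in\cD_{\Vrho}$, then $\|\sum_j z_j\psi_j\|_Z\leq\sum_j\rho_j\|\psi_j\|_Z=\sum_j\|\psi_j\|_Z+\sum_j(\rho_j-1)b_j$. Taking $\varepsilon := r-\sum_j\|\psi_j\|_Z>0$ and using the admissibility \eqref{eq:bepsAdm}, this is bounded by $\sum_j\|\psi_j\|_Z+\varepsilon=r$, so the argument stays in $\overline{B_r^Z}$; a slight shrinking of $\varepsilon$ (or working with the open ball via strict inequalities in \eqref{eq:bepsAdm}) places it in the open ball $B_r^Z$. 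The resulting holomorphic extension is then bounded by $M_u$ on the polydisc, giving \eqref{eq:HolExtBd}, and the summability $\bsb\in\ell^{\pbar}(\bbN)$ is exactly hypothesis (i). Crucially, the constant $\varepsilon$ depends only on $r$ and $\sum_j\|\psi_j\|_Z$, both of which are $\wn$-independent in the intended application, so $\varepsilon$ can be chosen independently of $\wn$.

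For Part (ii), having established $(\bsb,\pbar,\varepsilon)$-holomorphy, I would apply the main Smolyak convergence result \cite[Theorem 2.16]{ZS20_2485} verbatim. That theorem, under the stability hypothesis \eqref{eq:Qstab} on the univariate quadrature abscissae (our assumption (iii)), produces the downward-closed index sets $\Lambda_\epsilon$ and the error bound \eqref{eq:SmolErr} with the rate $-2/\pbar+1+\delta$ driven by the summability exponent $\pbar$, and with the constant depending on $\delta$ but not on $\wn$. The $\wn$-uniformity of the rate is inherited directly from the $\wn$-independence of $\varepsilon$ established in Part (i).

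The main obstacle is the careful treatment of the boundary case in the admissibility condition: the radial-displacement expansion functions $r_j$ grow in their $C^1(\bbS^{d-1})$ norm, so one must confirm that the sequence $\bsb=(\|\psi_j\|_Z)$ with $\psi_j\sim\beta_j r_j$ genuinely lies in $\ell^{\pbar}$ for some $\pbar<1$ — this couples the decay of the weights $\beta_j$ to the growth rates recorded in \eqref{eq:rj2d}--\eqref{eq:rj3d} and is precisely where the admissible weight sequences promised in \eqref{eq:SumIbet} enter. A secondary technical point is ensuring that the strict inclusions $\cD_{\rho_j}\subset\Co_{\rho_j}$ required by Definition~\ref{def:bpeHol}(ii) are compatible with the open-ball constraint, which is handled by the slight slack in the choice of $\varepsilon$ noted above.
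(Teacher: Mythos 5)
Your proposal is correct and follows essentially the same route as the paper: the paper likewise reduces the statement to \cite[Lemma~3.3]{ZS20_2485} for the $(\bsb,\pbar,\varepsilon)$-holomorphy of the composed map (which you verify directly, with the same choice of $\varepsilon$ governed by the slack $r-\sum_{j}\|\psi_j\|_Z$) and to \cite[Theorems~2.16 and~4.3]{ZS20_2485} for the quadrature error bound. The only detail the paper makes explicit that you leave implicit is that one must inspect the proofs in \cite{ZS20_2485} to confirm that the constant in the quadrature bound scales \emph{linearly} in the integrand modulus $M_u$, which is why $M_u$ can be pulled out as an explicit factor in \eqref{eq:SmolErr}.
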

This is \cite[Theorem~2.16 and Theorem~ 4.3, Eqn.(4.1)]{ZS20_2485}, with
the $(\bsb,\pbar,\eps)$-holomorphy 
\cs{following} from \cite[Lemma~3.3]{ZS20_2485}.
Inspection of the proofs in \cite{ZS20_2485} reveals, in particular,
that the constant $C>0$ in the quadrature error bound in the
statement of \cite[Theorem~ 4.3, Eqn.(4.1)]{ZS20_2485} scales linearly
in the integrand modulus $M_u$, whence the bound \eqref{eq:SmolErr}.

Here, as in \eqref{eq:mv}, 
the measure $\mu$ in \eqref{eq:SmolErr} 
is a countable product probability measure obtained from 
the univariate scaled Lebesgue measure, $\frac{1}{2} \lambda^1$.
For 
\emph{nested collections of univariate integration points} 
in the sense of \cite[Definition 2.1]{ZS20_2485}
it holds that 
$|\pts(\Lambda,\bschi)| = |\Lambda|$ (\cite[Lemma 2.1]{ZS20_2485}), 

The constant $C>0$ in \eqref{eq:SmolErr} depends on the sequence $\bsb$ in the statement
of the $(\bsb,\pbar,\eps)$ holomorphy of the parametric integrand function in \eqref{eq:DefParInt}.
%
\subsection{Combined Smolyak-PML/FEM error bounds}
\label{sec:SmPMLFE}
%
With the Smolyak-quadrature error bound Theorem~\ref{thm:Smolyak} in hand, we proceed to
estimate the error in the quadrature-FE approximation of the corresponding mean 
of the far-field.

The integral \eqref{eq:mv}, 
is approximated by a 
Smolyak type quadrature formula $Q_\Lambda[\cdot]$ as in \eqref{eq:Smolyak}.
We write, 
for any far-field direction $\unitx \in \bbS^{d-1}$,
and with the approximate far-field 
$\refs{u}^{\rm scat}_{\infty, \rm PML}(\rC;\unitxh)_h$ 
as in \eqref{eq:faft2},
\begin{multline}\label{eq:Bound}
    \bbE[\breve{u}^{\rm scat}_\infty(\cdot; \unitx)]
    -
    Q_\Lambda[ \rbffph[\cdot ]<\unitx> ] 
    \\ 
    = \underbrace{\bbE[\breve{u}^{\rm scat}_\infty (\cdot; \unitx) - \rbffph[\cdot ]<\unitx>)]}_{\esc{=:I}}
    + \underbrace{\bbE[\rbffph[\cdot ]<\unitx>] -  Q_\Lambda[ \rbffph[\cdot ]<\unitx> ] }_{\esc{=:II}} .
\end{multline}
We estimate the $L^2(\bbS^{d-1};\bbC)$-norms 
of the terms $I$ and $II$ in \eqref{eq:Bound} separately.

{\bf Term $I$} is the expected discretization error of the 
PML-Galerkin FE discretization.
For fixed direction $\unitx \in \bbS^{d-1}$,
estimated by the uniform 
(w.r.t. all admissible shapes $\breve{y}\in \cP$) 
error bound in the computed far-field pattern 
$\refs{u}^{\rm scat}_{\infty, \rm PML}(\rC;\cdot)_h$
which was bound in
Corollary~\ref{cor:FEMfar-field}, \eqref{eq:FEMfar-field}.
Due to $\mu(\cP) = 1$, 
\begin{equation}\label{eq:ffI}
\sup_{\unitx \in \bbS^{d-1}}
\left| 
\bbE( \breve{u}^{\rm scat}_\infty(\cdot; \unitx) - \rbffph[\cdot ]<\unitx> )
\right| 
\leq 
\sup_{\rC \in \cA_p} 
\N{\refs{u}^{\rm scat}_\infty(\rC;\cdot)
-\refs{u}^{\rm scat}_{\infty, \rm PML}(\rC;\cdot)_h 
}_{L^\infty(\mathbb{S}^{d-1})}
\;.
\end{equation}
We further majorize term $I$ by the bound \eqref{eq:FEMfar-field}, 
using the assumptions of Theorem~\ref{thm:FEM1}
and with $k_{1}$ as in Theorem~\ref{thm:inherit}, 
for all $h,k$, and $p$ satisfying \eqref{eq:FEMthreshold}, 
provided that $\rC(\Vy;\cdot) \in \cA_p$ uniformly w.r.t. $\Vy\in \cP$.

A sufficient condition for the 
regularity $\rC(\Vz;\cdot) \in \cA_p$ 
of the parametric geometry representation
\eqref{eq:rj2d}-\eqref{eq:rd}
is that that $\Vz\in  \Ch_{p}$, see \eqref{eq:Hp}.
With the radial spherical harmonics expansion functions $r_j$
as in Rem.~\ref{rmk:RadExp},
i.e. \eqref{eq:rj2d} for $d=2$ and \eqref{eq:rj3d} for $d=3$,
it holds that
\[
\| r_j \|_{C^{p+1}(\bbS^{d-1})} \simeq \| r_j \|_{C^{p,1}(\bbS^{d-1})} \simeq j^{(p+1)/(d-1)}\;,\;\; 
j=1,2,... 
\;.
\]
A sufficient condition for $\rC \in \cA_p$ 
is that the weight sequence 
$(\beta_j)_{j\geq 1} \in (0,\infty)^\bbN$
in \eqref{eq:rd} admits the bound
\begin{equation}\label{eq:SumIbet}
\exists C>0:\quad \beta_j  \leq C j^{-1-\epsilon-(p+1)/(d-1)} \;,\;\; j=1,2,3,... \;.
\end{equation}
For $\Vy\in \cP$, and every $p\geq 1$,
the geometry representation in \eqref{eq:rKL}, 
i.e. the sum
\[
\sum_{j\geq 1} \beta_jy_j r_j(s) 
\]
then converges in $C^{p,1}(\bbS^{d-1}) \sim C^{p+1}(\bbS^{d-1})$, 
uniformly w.r. to $\Vy\in \cP$,
and \eqref{eq:ffI} can be majorized by \eqref{eq:FEMfar-field}.

{\bf Term $II$}
is the quadrature error of the Smolyak quadrature $Q_\Lambda$ in \eqref{eq:Smolyak}
applied to the parametric integrand 
$\refs{u}^{\rm scat}_{\infty, \rm PML}(\rC;\cdot)_h$.
We estimate it with Theorem~\ref{thm:Smolyak}, 
\emph{assuming} \eqref{eq:SumIbet} on the weight sequence $\Bbeta$.
We introduce the notation $\bar{p}\in (0,1]$ for the summability
exponent of the weight sequence $\Bbeta$, and keep $p\in \bbN$ 
for the smoothness class in e.g. \eqref{eq:Hp}.

With $p\geq 1$, i.e.  $\Vz\in \Ch_p$ as defined in \eqref{eq:Hp}, $\bar{p}$-summability of
the weights $(\beta_j)_{j\geq 1} \in (0,\infty)^\bbN$ satisfying \eqref{eq:SumIbet} to
bound term $I$ will imply, thanks to Theorem~\ref{thm:Smolyak}, quadrature convergence rates immune
to the Curse of Dimensionality for Smolyak (or ``sparse grid'') quadrature, as described in
Section~\ref{sec:HDQad} and as analyzed in \cite{ZS20_2485}.  
It will also imply, via the parametric $(\bsb,\pbar,\eps)$-holomorphy shown in 
corresponding results for higher-order Quasi-Monte Carlo
(``HoQMC'' for short) deterministic integration, as shown in \cite[Theorem~3.1 and
Proposition~4.1]{JDTlGChS2016}.  We leverage these results, with $k$-explicit error bounds, by
verifying $(\bsb,\pbar,\eps)$-holomorphy (cf. Definition~\ref{def:bpeHol}) of the parametric
integrands $\Vz \mapsto \rbffph[\Vz ]<\cdot>$.

To verify the assumptions in Theorem~\ref{thm:Smolyak}, 
we remind that we work in the shape-parametrization \eqref{eq:rd} 
with the (globally supported, in $\bbS^{d-1}$) 
radial spherical harmonics expansion functions 
$r_j$ as in Rem.~\ref{rmk:RadExp}, 
i.e. \eqref{eq:rj2d} for $d=2$ and \eqref{eq:rj3d} for $d=3$.
We choose in Theorem~\ref{thm:Smolyak}, item (i), 
\begin{equation}\label{eq:ChoicXZ}
Z   = C^{2}(\bbS^{d-1};\bbR) \subset C^{1,1}(\bbS^{d-1};\bbR)
\quad \mbox{and} \quad
X   = C^{0}(\bbS^{d-1};\bbC) 
\;,
\end{equation}
with corresponding norms.  The $C^{1,1}$-regularity implied by the choice of $Z$ is
required in Theorem~\ref{thm:FEM1}. 

Based on the correspondence \eqref{eq:phpb}, we furthermore set $\psi_j = \beta_j r_j$ and
identify $\Fu$ in Theorem~\ref{thm:Smolyak}, item (ii), with
$\refs{u}^{\rm scat}_{\infty, \rm PML}(\rC;\cdot)_h$ in \eqref{eq:faft2} resulting in the
parametric integrand $U(\breve{\Vy})$ in \eqref{eq:DefParInt} being
$\rbffph[\breve{\Vy}]<\cdot>$.
 
Assumption \eqref{eq:SumIbet} 
on the weight sequence 
$\Bbeta = (\beta_j)_{j\geq 1}$ 
and $\psi_j = \beta_j r_j$ implies 
with \eqref{eq:rj2d}, \eqref{eq:rj3d} for $d=2,3$ with $p=2$ 
that 
(using $\| r_j \|_{C^{1,1}(\bbS^{d-1})} \sim \| r_j \|_{C^2(\bbS^{d-1})} \sim j^{3/(d-1)}$ as $j\to \infty$;
 the $C^{1,1}$-regularity is required by Theorem~\ref{thm:FEM1})
\begin{equation}\label{eq:Defbj}
b_j := \| \psi_j \|_Z \simeq j^{1/(d-1)} \beta_j \lesssim j^{-(1+\epsilon + p/(d-1))}.
\end{equation}
Then, 
$(\| \psi_j \|_Z)_{j\geq 1} \in \ell_{\bar{p}}(\bbN)$,
if
\begin{equation}\label{eq:pSumab}
\left( 1 + \frac{p}{d-1} \right)^{-1} \leq \pbar < 1 
\;,
\quad 
p=1,2,...
\end{equation}
%

The following result complements Cor.~\ref{cor:ffpby}.

\begin{proposition}\label{prop:bpeHol}
Suppose that the assumptions of Theorem~\ref{thm:FEM1} hold
and 
assume \eqref{eq:SumIbet} for the weight sequence $\Bbeta = (\beta_j)_{j\geq 1}$,
and the affine shape parametrization \eqref{eq:rd}
with the expansion coefficients 
$\psi_j = \beta_j r_j$ with \eqref{eq:rj2d}, \eqref{eq:rj3d} for $d=2,3$. 

Then, there holds
that
the parametric PML-Galerkin FE far-field approximations 
$$\{ \Vy\mapsto \rbffph[\Vy ]<\unitx>: \Vy\in \cP\}$$
as defined in \eqref{eq:faft2} via \eqref{eq:phpb} are, with the sequence
$\bsb = (b_j)_{j\geq 1}$ as defined in \eqref{eq:Defbj}, $(\bsb, \bar{p},\varepsilon)$
holomorphic, uniformly w.r.t. $h$, $\wn$, in the space $X = C^0(\bbS^{d-1};\bbC)$.
\end{proposition}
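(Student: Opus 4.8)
The plan is to obtain Proposition~\ref{prop:bpeHol} as a direct application of part~(i) of Theorem~\ref{thm:Smolyak}, after matching the present objects with the hypotheses of that theorem. Following the identifications already prepared in \eqref{eq:ChoicXZ} and around \eqref{eq:Defbj}, I take the target space $X = C^0(\bbS^{d-1};\bbC)$, the (complexified) parameter space $Z = C^2(\bbS^{d-1};\bbC)$, the representation system $\psi_j = \beta_j r_j$, and define $\Fu$ by sending $w\in Z$ to the approximate far-field pattern $\rbffph$ attached to the radial displacement $\rC = w/k$. Because the far field depends on $\Vz$ only through $w = \sum_{j\geq 1}\breve z_j\psi_j$, i.e. through $\rC = w/k$, the affine composition \eqref{eq:DefParInt} reproduces exactly $\Vy\mapsto\rbffph[\Vy]<\cdot>$ via \eqref{eq:phpb} and \eqref{eq:rd}. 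Hypothesis~(iii) of Theorem~\ref{thm:Smolyak} concerns only the quadrature abscissae and plays no role in holomorphy, so it remains to verify hypotheses~(i) and~(ii); conclusion~(i) of the theorem then yields the asserted $(\bsb,\pbar,\varepsilon)$-holomorphy.

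To verify hypothesis~(ii) I would argue that $\Fu$ is holomorphic and bounded on a ball $B^Z_r = \{w\in Z:\,\|w\|_Z<r\}$. Holomorphy follows from Corollary~\ref{cor:ffpby}, which states that $\Vz\mapsto\rbffph[\Vz]<\cdot>$ is holomorphic on $\Ch_p$; through the factoring $\rC=w/k$ this transfers to holomorphy of $\Fu$ as a function of $w$ on the set where $w/k$ lies in the interior of $\cA_p$. I then fix $r>0$ so small that $B^Z_r$ is mapped by $w\mapsto w/k$ into the interior of $\cA_p$: thanks to the $k^{-1}$ scaling in \eqref{eq:rd} and to $\|(\beta_j)\|_{\ell_1}\leq 1\leq k/3$, each defining inequality of $\Ch_p$ in \eqref{eq:Hp} and \eqref{eq:Hh} holds with margin to spare, and the matched $k$-scalings let $r$ be chosen independently of $k$. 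Boundedness in the $X=C^0$-norm follows by combining the $L^\infty(\bbS^{d-1})$ bounds \eqref{eq:ffbd} and \eqref{eq:FEMfar-field} --- or, equivalently, from the Cauchy--Schwarz estimate in the proof of Corollary~\ref{cor:FEMfar-field}, using that the far field is a smooth function of $\unitxh$ (an integral of the Galerkin solution against a kernel real-analytic in $\unitxh$) --- yielding a bound $M_u = Ck\|\widehat u^{\mathrm{inc}}\|_{H^1_k(B_2)}$ that is uniform in $h$ and in $\rC\in\cA_p$, hence in $w\in B^Z_r$.

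For hypothesis~(i) I would compute $b_j = \|\psi_j\|_Z = \beta_j\|r_j\|_{C^2(\bbS^{d-1})}$ and insert the growth $\|r_j\|_{C^{p,1}(\bbS^{d-1})}\simeq j^{(p+1)/(d-1)}$ from \eqref{eq:rj2d}--\eqref{eq:rj3d} together with the decay \eqref{eq:SumIbet} of $\beta_j$, obtaining $b_j\lesssim j^{-(1+\epsilon+p/(d-1))}$ as recorded in \eqref{eq:Defbj}. This decay gives $\bsb\in\ell^{\pbar}(\bbN)$ for every $\pbar$ in the admissible range \eqref{eq:pSumab} and, in particular, $\sum_{j\geq 1}\|\psi_j\|_Z<\infty$. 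After checking that this sum is strictly below the radius $r$ fixed above --- which the $k^{-1}$ scaling again guarantees --- hypothesis~(i) holds with admissibility margin $\varepsilon := r-\sum_{j\geq 1}\|\psi_j\|_Z>0$: for a $(\bsb,\varepsilon)$-admissible polyradius $\Vrho$ one then has $\sum_{j\geq 1}\rho_j\|\psi_j\|_Z\leq r$, so the polydisc $\cD_{\Vrho}$ of \eqref{eq:bepsAdm} is mapped into $\overline{B^Z_r}$, where $\Fu$ is holomorphic and bounded by $M_u$. With hypotheses~(i) and~(ii) established, Theorem~\ref{thm:Smolyak}(i) delivers the $(\bsb,\pbar,\varepsilon)$-holomorphy of $\Vy\mapsto\rbffph[\Vy]<\cdot>$ in $X=C^0(\bbS^{d-1};\bbC)$, and the uniformity in $h$ and $k$ is inherited because every constant above was obtained uniformly.

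The step I expect to be the main obstacle is the bookkeeping that reconciles two competing demands on the radius $r$ and on the smoothness class $Z$. On the one hand $B^Z_r$ must be small enough that $w/k\in\cA_p$, which forces $Z$ to control precisely the norms appearing in $\Ch_p$ --- so the choice $Z=C^2$ in \eqref{eq:ChoicXZ} is legitimate only for the value of $p$ singled out there, and for larger $p$ one must raise the smoothness of $Z$ accordingly. On the other hand $\sum_{j\geq 1}\|\psi_j\|_Z$ must stay strictly below $r$ in order to leave a positive margin $\varepsilon$ for the complex extension onto the polydiscs. Checking that the $k^{-1}$-scaling of \eqref{eq:rd} and the summability \eqref{eq:SumIbet} render these two requirements simultaneously satisfiable, with all constants independent of $h$ and $k$, is the technical heart of the argument; the remainder is a direct invocation of the cited results.
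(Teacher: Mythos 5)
Your proposal is correct and follows essentially the same route as the paper: the paper's own proof is a one-line appeal to \cite[Lemma~3.3]{ZS20_2485} (equivalently, part (i) of Theorem~\ref{thm:Smolyak}), with the identifications $X=C^0(\bbS^{d-1};\bbC)$, $Z=C^2(\bbS^{d-1};\bbR)$, $\psi_j=\beta_j r_j$, $\Fu=\rbffph$ and the summability computation \eqref{eq:Defbj}--\eqref{eq:pSumab} all carried out in the text immediately preceding the proposition, exactly as you do. Your additional bookkeeping (the $k^{-1}$ factor relating $w=\sum_j z_j\psi_j$ to $\rC$, the use of Corollary~\ref{cor:ffpby} for holomorphy and boundedness on $\Ch_p$, and the flagged tension between the radius of $B^Z_r$ and $\sum_j\|\psi_j\|_Z$) simply makes explicit what the paper leaves implicit.
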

\begin{proof}
This follows from \cite[Lemma~3.3]{ZS20_2485}
with the summability exponent $\bar{p}$ bounded as in \eqref{eq:pSumab}.
\end{proof}

With Proposition~\ref{prop:bpeHol}, 
the Smolyak error bound \eqref{eq:SmolErr} then 
implies that with $p\geq 1$ as in \eqref{eq:SumIbet} 
\[
\begin{array}{rcl}
\displaystyle
\left\| 
\bbE[\rbffph[\cdot ]<\unitx>] - Q_\Lambda[ \rbffph[\cdot ]<\unitx> ] 
\right\|_{L^2(\bbS^{d-1};\bbC)} 
&\leq&  
C M_{\rbffph[\cdot]<\cdot>} 
|\pts(\Lambda_\epsilon, \bschi)|^{-\frac{2}{\bar{p}}+1+\delta} 
\\
&\leq& 
C M_{\rbffph [\cdot]<\cdot>}
|\pts(\Lambda_\epsilon, \bschi)|^{-1-\frac{2p}{d-1}+\delta} 
\end{array}
\]
By Corollary~\ref{cor:ffpby} and \eqref{eq:ffpbybd}, the modulus
$ M_{\rbffph [\cdot]<\cdot>} $ admits the $k$-explicit bound
\[
M_{\rbffph [\cdot]<\cdot>} 
\leq 
\sup_{\Vz \in \Ch} \NLtwo[\bbS^{d-1}]{\rbffph[\Vz]<\cdot>} 
\leq 
C k \|\widehat{u}^{\mathrm{inc}}\|_{H^1_k(B_2)}
\;.
\]
Combining this with the bound \eqref{eq:ffI} for term $I$, 
we arrive for scatterer-geometries of 
regularity $C^{p,1}(\mathbb{S}^{d-1})$ for some $p\geq 1$
at the error bound
\begin{multline}\nonumber
    \left\|  
      \bbE(\breve{u}^{\rm scat}_\infty(\cdot; \unitx))
      -
      Q_\Lambda[ \rbffph[\cdot ]<\unitx> ] 
    \right\|_{L^2(\bbS^{d-1};\bbC)} 
    \\
    \leq 
    C
    \sup_{\rC \in \cA_p} 
    \N{\refs{u}^{\rm scat}_\infty(\rC;\cdot)
      -\refs{u}^{\rm scat}_{\infty, \rm PML}(\rC;\cdot)_h 
    }_{L^\infty(\mathbb{S}^{d-1})}
    +
    C k \|\widehat{u}^{\mathrm{inc}}\|_{H^1_k(B_2)}
    |\pts(\Lambda_\epsilon, \bschi)|^{-1-\frac{2p}{d-1}+\delta}
\end{multline}
with the first term of the bound in turn majorized by \eqref{eq:FEMfar-field}.
Absorbing the ($\wn$-independent) constants yields
an $\wn$-explicit error bound for the 
\rev{combined FE-PML, Smolyak-quadrature approximated}
\emph{expected far-field pattern}.
\medskip

\rev{%
\noindent\fbox{%
  \begin{minipage}{0.975\linewidth}%
\begin{theorem}\label{thm:CombErrBd}
Suppose that the dimension of the scatterer is $d\in \{2,3\}$
and that the assumptions of Theorem~\ref{thm:FEM1} hold.
Assume further \eqref{eq:SumIbet} 
for the weight sequence $\Bbeta = (\beta_j)_{j\geq 1}$
ensuring that the scatterers' geometry is $C^{p,1}$-regular,
and the affine shape parametrization \eqref{eq:rd}
with the expansion coefficients 
$\psi_j = \beta_j r_j$ with \eqref{eq:rj2d}, \eqref{eq:rj3d} for $d=2,3$.

Then the approximate, expected far-field pattern obtained by 
Smolyak-quadrature $Q_\Lambda$ in \eqref{eq:Smolyak} 
applied to the parametric FE-PML approximations 
$\rbffph[\cdot ]<\unitx>$ satisfies the error bound
    \begin{multline}
    \nonumber
\left\|  
    \bbE(\breve{u}^{\rm scat}_\infty(\cdot; \unitx))
    -
    Q_\Lambda[ \rbffph[\cdot ]<\unitx> ] 
\right\|_{L^2(\bbS^{d-1};\bbC)} 
\\
\lesssim
\|\widehat{u}^{\mathrm{inc}}\|_{H^1_k(B_2)}
\left[
 \big( hk + (hk)^p k\big)(hk)^p k
 +
  e^{- C_{\rm PML, 2} k\big(R_{\rm tr}-(1+\epsilon) R_1\big)}
+
k
|\pts(\Lambda_\epsilon, \bschi)|^{-1-\frac{2p}{d-1}+\delta}
\right]
\;.
\end{multline}
Here, the constant hidden in $\lesssim$ is
bounded independent of $\wn$, but depends on
the regularity $p\geq 1$ of the geometry,
and on the space dimension $d$.
\end{theorem}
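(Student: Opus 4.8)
The plan is to follow the two-term splitting already displayed in \eqref{eq:Bound}, estimating the two contributions separately and recombining via the triangle inequality. First I would treat Term $I$, the parameter-averaged FE--PML discretisation error $\bbE[\breve{u}^{\rm scat}_\infty(\cdot;\unitx) - \rbffph[\cdot]<\unitx>]$. Since $\mu(\cP)=1$, this is bounded pointwise in $\unitx$ by the supremum over admissible shapes, as recorded in \eqref{eq:ffI}, and that supremum is controlled by the uniform far-field error estimate \eqref{eq:FEMfar-field} of Corollary~\ref{cor:FEMfar-field}. The one thing to verify here is that the parametric displacement $\rC(\Vy;\cdot)$ from \eqref{eq:rd} lies in $\cA_{p}$ \emph{uniformly} in $\Vy\in\cP$; this is precisely where the weight-summability hypothesis \eqref{eq:SumIbet} enters, combined with the growth rates $\|r_j\|_{C^{p,1}(\bbS^{d-1})}\simeq j^{(p+1)/(d-1)}$ of the expansion functions \eqref{eq:rj2d}--\eqref{eq:rj3d}, so that the series defining $r(\Vy;\cdot)$ converges in $C^{p,1}(\bbS^{d-1})$ uniformly over $\cP$ and satisfies the bounds defining $\cA_{p}$. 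Invoking \eqref{eq:FEMfar-field} then produces the first two bracketed terms, each carrying the factor $\|\widehat{u}^{\rm inc}\|_{H^1_k(B_2)}$.

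Next I would bound Term $II$, the Smolyak quadrature error $\bbE[\rbffph[\cdot]<\unitx>] - Q_\Lambda[\rbffph[\cdot]<\unitx>]$. Here I would apply Theorem~\ref{thm:Smolyak} with the choices $Z=C^2(\bbS^{d-1};\bbR)$, $X=C^0(\bbS^{d-1};\bbC)$ and $\psi_j=\beta_j r_j$ as in \eqref{eq:ChoicXZ}, identifying the abstract holomorphic map $\Fu$ with the approximate far-field $\rbffph$. The crucial input is the $(\bsb,\pbar,\varepsilon)$-holomorphy of $\Vz\mapsto\rbffph[\Vz]<\cdot>$ established in Proposition~\ref{prop:bpeHol}, with $\bsb$ as in \eqref{eq:Defbj} and summability exponent satisfying \eqref{eq:pSumab}. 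Theorem~\ref{thm:Smolyak} then delivers the convergence rate $|\pts(\Lambda_\epsilon,\bschi)|^{-2/\pbar+1+\delta}$ in \eqref{eq:SmolErr}; substituting the smallest admissible $\pbar=(1+\tfrac{p}{d-1})^{-1}$ from \eqref{eq:pSumab} converts the exponent into $-1-\tfrac{2p}{d-1}+\delta$. The integrand modulus $M_{\rbffph}$ appearing in \eqref{eq:SmolErr} is controlled, uniformly in $h$ and $k$, by the far-field stability estimate \eqref{eq:ffpbybd} of Corollary~\ref{cor:ffpby}, giving $M_{\rbffph}\leq C k\|\widehat{u}^{\rm inc}\|_{H^1_k(B_2)}$ and hence the third bracketed term.

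Adding the two estimates and absorbing the $k$-independent constants (which depend only on $d$ and on the regularity $p\geq 1$) into the implied constant of $\lesssim$ yields the stated bound. I expect the main obstacle to be not the recombination itself but the bookkeeping that guarantees \emph{uniform} membership $\rC(\Vy;\cdot)\in\cA_{p}$ (and, for the holomorphy of the complexified integrand, $\rC(\Vz;\cdot)\in\Ch_{p}$) across the entire parameter cube, since this is exactly what licenses the use of the $\rC$-uniform constants in both Corollary~\ref{cor:FEMfar-field} and Proposition~\ref{prop:bpeHol}. Tracking that the single summability condition \eqref{eq:SumIbet} simultaneously secures the $C^{p,1}$-regularity required by the FEM bound and the $\ell^{\pbar}$-summability of $\bsb$ required for the quadrature rate is the delicate part; everything else is a direct application of results already in hand, so no genuinely new estimate is needed.
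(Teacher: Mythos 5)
Your proposal is correct and follows essentially the same route as the paper: the splitting \eqref{eq:Bound} into Terms $I$ and $II$, the bound on Term $I$ via \eqref{eq:ffI} and the $\rC$-uniform estimate \eqref{eq:FEMfar-field} (with uniform membership $\rC(\Vy;\cdot)\in\cA_p$ secured by \eqref{eq:SumIbet} and the growth of $\|r_j\|_{C^{p,1}}$), and the bound on Term $II$ via Theorem~\ref{thm:Smolyak}, Proposition~\ref{prop:bpeHol}, the substitution $\pbar=(1+\tfrac{p}{d-1})^{-1}$, and the modulus bound \eqref{eq:ffpbybd}. No substantive differences from the paper's argument.
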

\end{minipage}
}}
\medskip

\begin{remark}[Higher Order Quasi-Monte Carlo - PML Galerkin FE error bound]
\label{rmk:HoQMCFE}
In Proposition~\ref{prop:bpeHol}, 
we observed that 
the parametric integrands $\Vy \mapsto \rbffph[\breve{\Vy}]<\cdot>$
are $(\bsb,\bar{p},\varepsilon)$-holomorphic, 
uniformly with respect to $h$,
as vector-valued integrand functions taking values in the 
(separable) Hilbert-space $X= L^2(\bbS^{d-1};\bbC)$.
For such integrand functions, certain deterministic 
so-called \emph{Quasi-Monte Carlo integration methods} 
have been shown to furnish likewise higher orders of convergence,
without the CoD. 
For general introduction and comprehensive presentation of this class
of high-dimensional quadrature methods, we refer to \cite{QMCBook}.

Specifically, based on Proposition~\ref{prop:bpeHol}, 
it was shown in \cite[Theorem~3.1, Proposition~4.1]{JDTlGChS2016}, 
that deterministic quadrature rules with $N$ points can be constructed
that approximate the integral \eqref{eq:mv} with rate $N^{-1/\bar{p}}$. 
The construction can be effected in $O(N\log N)$ many operations, 
subject to prior truncation to a suitable finite number $s$ of integration
variables.

Particular QMC quadratures covered by the weighted norm setting in 
\cite{JDTlGChS2016} allow for computable 
a-posteriori error estimation of the QMC integration error \cite{DLS22}.
\rev{For related, recent work on wavenumber-explicit error estimates of 
    QMC FE approximations in UQ, we refer to \cite{GKN25}.}
\end{remark}

\section{Conclusion}
\label{sec:Concl}
We have established shape-holomorphy of the approximate far-field pattern based on
domain transformation, finite-element Galerkin approximation, and PML truncation for a
class of time-harmonic acoustic scattering transmission problems. We have done this in the
special setting of star-shaped scatterers parameterized by their radial extension and for
piecewise homogeneous isotropic media, for which the refractive index outside the
scatterer is larger than that inside. In this situation we could accomplish the first
analysis that is \emph{fully explicit in the  wavenumber $\wn$}. We could also derive a
$\wn$-explicit estimate for the sparse grid (Smolyak) quadrature error in the case of an
affine parameterization of the radial displacement function that determines the shape of
the scatterer.

In order to keep the focus of the article, several issues and possible extensions have not
been addressed:
\begin{itemize}
\item In the present affine-parametric representation \eqref{eq:rKL}-\eqref{eq:rd},
  a basis $\{ r_{j}(\Bs) \}_{j\geq 1}$ of $L^2(\bbS^d)$ 
  with global support in $\bbS^d$ was employed. 
  Locally supported basis functions $r_{j}$ like (spline) wavelets for
  representing the radial displacement function are equally possible and 
  offer advantages in terms of parsimonious representation of local features.
  The quantified holomorphy and quadrature convergence analysis for such 
  representation systems can be performed along the present lines. 
  We refer to \cite{GHS24} for details and computational comparisons, albeit 
  in a $\wn$-implicit setting.
\item The investigation of multi-level algorithms for high-dimensional quadrature
  \cite{DiGaLeSc:17,Z18_2760}. The mathematical justification of these algorithms in the
  present setting requires, however, uniform with respect to $h$ and $\wn$ holomorphic dependence
  of the parametric solution $\refs{u}(\rC;\cdot)$ in norms which are stronger than
  $H^1_k(B_2)$ in \eqref{eq:holo_thm2}.  In Theorem~\ref{thm:existence}, part (ii), we
  already provided such bounds in \eqref{eq:holo_thm2_H2}.
\item 
 The most general case of large wavenumber-independent ``$O(1)$'' shape variations, \emph{cf.}
  the discussion in Section~\ref{ss:nonres}. In this case we expect a $\wn$-dependent,
  pre-asymptotic phase of the convergence of standard Smolyak quadrature. In this phase we may
  observe a dimension-dependent rate up to parameter dimension $O(\wn^{d-1})$. 
  Smolyak constructions based
  on \emph{wavenumber-dependent} (Filon-type) univariate quadrature may provide a partial
  remedy here, 
  as proposed in \cite{wu2022filonclenshawcurtissmolyak}.
\item The construction of efficient sparse polynomial surrogate shape-to-far-field maps
  based on interpolation. 
\item The extension of the approximation results of Section~\ref{sec:AplShUQ} to the
  variance of the far-field pattern. This will confront the difficulty that the arising
  integral will fail to feature an analytic integrand, since we have to integrate the
  squared modulus of a complex-valued function. 
\item \esc{Finally, we note that we expect analogous shape holomorphy results to hold for the transmission problem for the time-harmonic Maxwell equations. Indeed, the Maxwell analogues of the Helmholtz bounds in \cite{MS19} (used in \S\ref{sec:MS}) appear in \cite{CFMS23}, and the Maxwell analogue of the Helmholtz $h$-FEM convergence results of \cite{GS3} (used in \S\ref{sec:ExpBdsFEM}) appear in \cite{CFGS24}.}
\end{itemize}

\begin{appendix}

  \section{Comparison of \esc{Corollary \ref{cor:FEMfar-field}}
    with the results of \cite{EsMe:14}}
\label{sec:appendix1}
 
Remark \ref{rem:esme} described how the following two steps in the proof of Corollary \ref{cor:FEMfar-field} might appear
  over-simplistic:
  \begin{enumerate}
  \item using bounds involving the $L^2(B_2)$ norm of the data $f^{\rm alt}$, while for
    plane-wave scattering $f^{\rm alt} \in H^s(B_2)$ for all $s>0$ (see \eqref{eq:pw_f}),
    and
  \item estimating the integral in the expression \eqref{eq:faft} for the far-field
    pattern using the Cauchy--Schwarz inequality, instead of using, say, a duality
    argument.
  \end{enumerate}

  \rev{Conversely,} the paper \cite{EsMe:14} both
  \begin{enumerate}
  \item bounds the Galerkin error in terms of $\|f\|_{H^s}$ for arbitrary $s>0$, and
  \item bounds $L(u-u_h)$, where $L(u) = \int u \overline{z}$ for $z \in H^{s'}$, using a
    duality argument.
  \end{enumerate}
%

Nevertheless, for the particular case of data $f$ coming from a plane-wave (i.e.,
$f^{\rm alt}$), and the functional $L(\cdot)$ being the far-field pattern expressed as
\eqref{eq:faft}, the results of \cite{EsMe:14} 
do not give a better result than Corollary \ref{cor:FEMfar-field}. 

We now briefly justify this statement; strictly speaking, the results in \cite{EsMe:14} cover the $hp$-FEM applied to the
  constant-coefficient Helmholtz equation, but in principle they can be extended to the
  $h$-FEM (for arbitrary $p$) for the variable-coefficient Helmholtz equation using the
  ideas of \cite{ChNi:20} and \cite{GS3}. In this discussion we are interested in these results applied to the 
FE error $\refs{u}^{\rm alt}_{\rm PML}(\rC;\cdot) - \refs{u}^{\rm alt}_{\rm PML}(\rC;\cdot)_h$ 
(as in  Corollary \ref{cor:FEMfar-field}), 
but to lighten notation we just talk about $u-u_h$. 
Furthermore, in this discussion 
we assume that the norm of the solution operator scales as $\sim k$ (as in Theorem \ref{thm:MS}).

A standard duality argument 
(see, e.g., \cite[Proposition 2.1]{EsMe:14}) 
shows that 
\beq\label{eq:EM1}
\N{u-u_h}_{L^2} \leq C 
 \N{u-u_h}_{H^1_k}
  \sup_{0\neq g \in L^2}
 \min_{v_h \in V_h}\frac{\N{\mathcal{S}^* g - v_h}_{H^1_k}}{\N{g}_{L^2}},
\eeq
and \cite[Lemma 2.3]{EsMe:14} uses similar ideas to prove that 
\beq\label{eq:EM2}
\left|\int (u-u_h)z \right| \leq C \N{u-u_h}_{H^1_k} \min_{v_h \in V_h} \N{\mathcal{S}^* z - v_h}_{H^1_k},
\eeq
where $\mathcal{S}^* :L^2 \to H^1_k$ is the Helmholtz adjoint solution operator.
{The work \cite{EsMe:14} focusses on} these bounds when the Galerkin solution is quasi-optimal, 
{in which case} the $H^1_k$ errors are bounded, uniformly in $k$, by the best approximation error. 
Recall from Theorem \ref{thm:FEM1} and Remark \ref{rem:FEM1} 
that a {sufficient (and empirically necessary)} 
condition for quasioptimality is that $(hk)^p k$ {be} sufficiently small.

If $z\in L^2$, then
\beq\label{eq:EM3}
\min_{v_h \in V_h}\N{\mathcal{S}^* z - v_h}_{H^1_k} \leq C \Big( hk + (hk)^pk \Big) \N{z}_{L^2}
\eeq
by \cite{MeSa:10, MeSa:11}, \cite[Lemma 2.13]{ChNi:20}, \cite[Theorem 1.7]{GS3}. Furthermore,
if $z\in H^{p-1}$ is $k$-oscillatory, in that it satisfies the bound \eqref{eq:koscillatory} (with $\refs{f}$ replaced by $z$), then
\beq\label{eq:EM3a}
\min_{v_h \in V_h}\N{\mathcal{S}^* z - v_h}_{H^1_k} \leq C (hk)^pk \N{z}_{L^2}
\eeq
by \cite[Part (ii) of Theorem 4.4]{EsMe:14}. The bounds \eqref{eq:EM3} and \eqref{eq:EM3a} also hold with $\mathcal{S}^*$ replaced by $\mathcal{S}$ since $\mathcal{S}^* f = \overline{\mathcal{S}\overline{f}}$.


Therefore, if $u$ is the solution of the Helmholtz equation with $k$-oscillatory (in the sense of \eqref{eq:koscillatory})  right-hand side $f\in H^{p-1}$ and $h$ is such that $(hk)^p k\leq C$ for sufficiently-small $C>0$, then the combination of \eqref{eq:EM1}, quasioptimality, \eqref{eq:EM3}, and \eqref{eq:EM3a} implies that
\beq\label{eq:EM4}
\N{u-u_h}_{L^2} \leq C \Big( hk + (hk)^p k \Big)(hk)^p k \N{f}_{L^2}.
\eeq
Furthermore, if $z\in H^{p-1}$ is $k$-oscillatory, 
as in the case of the far-field pattern \eqref{eq:faft}, 
then the combination of \eqref{eq:EM2}, quasioptimality, and \eqref{eq:EM3a} implies that 
\beq\label{eq:EM5}
\left|\int (u-u_h)z \right| \leq C (hk)^p k \N{f}_{L^2} (hk)^p k \N{z}_{L^2}.
\eeq
 The bound \eqref{eq:EM4} is the same as \eqref{eq:temp_final2}, and the bound \eqref{eq:EM5} is no better than \eqref{eq:EM4}
when $\|z\|_{L^2}\sim 1$ 
and when $(hk)^p k$ is fixed (i.e., when one chooses the least restrictive condition on $h$ allowed by the theory as $k\to\infty$). 
That is, in this setting of data coming from a plane-wave {with the quantity of interest} 
being the far-field pattern expressed as {the linear functional} \eqref{eq:faft}, 
the results of \cite{EsMe:14} indeed do not give a better result
than Corollary \ref{cor:FEMfar-field}. 
(Note that the bounds on the error in the functional in \cite[Part (ii) of Corollary 4.5
and Part (ii) of Corollary 4.6]{EsMe:14} assume that $\|z\|_{H^{s'}}$ is independent of
$k$, and thus these results have better $k$-dependence than stated above.)
\end{appendix}

\section*{Acknowledgements}

EAS was supported by EPSRC grant EP/R005591/1.
This work was initiated when EAS was a Nachdiplom lecturer at ETH in Autumn 2021, supported by the Forschungsinstitut f\"ur Mathematik (FIM).


\end{document}